\numberwithin{equation}{section}
\newcommand{\TPSS}{S^{\hspace{.2mm}2} \mbox{$\times
\hspace{-2.8mm}_{-}$} \, S^{\hspace{.1mm}1}}
\newtheorem{defn}{{\bf Definition}}[section]
\newtheorem{eg}[defn]{{\bf Example}}
\newtheorem{lemma}[defn]{{\bf Lemma}}
\newtheorem{prop}[defn]{{\bf Proposition}}
\newtheorem{theo}[defn]{{\bf Theorem}}
\newtheorem{cor}[defn]{{\bf Corollary}}
\newtheorem{remark}[defn]{{\bf Remark}}
\begin{document}

\title{Minimal crystallizations of 3-manifolds}
\author{Biplab Basak and Basudeb Datta}

\date{}

\maketitle

\vspace{-10mm}
\begin{center}

\noindent {\small Department of Mathematics, Indian Institute of Science, Bangalore 560\,012, India.$^1$}


\footnotetext[1]{{\em E-mail addresses:} biplab10@math.iisc.ernet.in (B.
Basak), dattab@math.iisc.ernet.in (B. Datta).}

\medskip

\date{December 10, 2013}
\end{center}


\hrule

\begin{abstract}
We have introduced the weight of a group which has a presentation with number of relations is at most the number
of generators. We have shown that the number of facets of any contracted pseudotriangulation of a connected
closed 3-manifold $M$ is at least the weight of $\pi(M, \ast)$. This lower bound is sharp for the 3-manifolds
$\mathbb{R P}^3$, $L(3,1)$, $L(5,2)$, $S^1\times S^1 \times S^1$, $S^{\hspace{.2mm}2} \times S^1$, $\TPSS$ and
$S^{\hspace{.2mm}3}/Q_8$, where $Q_8$ is the quaternion group. Moreover, there is a unique such facet minimal
pseudotriangulation in each of these seven cases.  We have also constructed contracted pseudotriangulations of
$L(kq-1,q)$ with $4(q+k-1)$ facets for $q \geq 3$, $k \geq 2$ and $L(kq+1,q)$ with $4(q+k)$ facets for $q\geq 4$,
$k\geq 1$. By a recent result of Swartz, our pseudotriangulations of $L(kq+1, q)$ are facet minimal when $kq+1$
are even. In 1979, Gagliardi found presentations of the fundamental group of a manifold $M$ in terms of a
contracted pseudotriangulation of $M$. Our construction is the converse of this, namely, given a presentation of
the fundamental group of a 3-manifold $M$, we construct a contracted pseudotriangulation of $M$. So, our
construction of a contracted pseudotriangulation of a 3-manifold $M$ is based on a presentation of the
fundamental group of $M$ and it is computer-free.

\end{abstract}


\noindent {\small {\em MSC 2010\,:} Primary 57Q15. Secondary 57Q05, 57N10, 05C15.

\noindent {\em Keywords:} Pseudotriangulations of manifolds, Crystallizations, Lens spaces, Presentations of
groups.

}

\medskip

\hrule

\section{Introduction and Results}

A {\em simplicial cell complex} $K$ is the face poset of a regular CW complex $W$ such that the boundary complex
of each cell is isomorphic to the boundary complex of a simplex of same dimension. The space $W$ is said to be
the {\em geometric carrier} of $K$ and is denoted by $|K|$. If a space $X$ is homeomorphic to $|K|$ then we say
that $K$ is a {\em pseudotriangulation} of $X$. For $d \geq 1$, a graph $\Gamma = (V, E)$ with an edge coloring
$\gamma : E \to \{1, \dots, d+1\}$ determines a $d$-dimensional simplicial cell complex $\mathcal{K}(\Gamma)$
whose vertices have one to one correspondence with the colors $1, \dots, d+1$ and the facets have one to one
correspondence with the vertices in $V$. If $\mathcal{K}(\Gamma)$ is a pseudotriangulation of a space $M$ then
$(\Gamma, \gamma)$ is called a {\em crystallization} of $M$. So, if $(\Gamma, \gamma)$ is a crystallization of a
$d$-manifold $M$ then the number of vertices in the pseudotriangulation ${\mathcal K}(\Gamma)$ of $M$ is $d+1$.
In \cite{pe74}, Pezzana showed the following.

\begin{prop}[Pezzana] \label{prop:pezzana74}
Every connected closed PL-manifold admits a crystallization.
\end{prop}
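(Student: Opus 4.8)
The strategy I would follow is the classical one: from an arbitrary triangulation of $M$ produce \emph{some} edge-coloured graph $(\Gamma,\gamma)$ with $|\mathcal{K}(\Gamma)| \cong M$, and then repeatedly contract pairs of equally-coloured vertices until only $d+1$ remain. Concretely, since $M$ is a connected closed PL $d$-manifold it admits a finite simplicial triangulation $K$. Pass to the first barycentric subdivision $K' := \mathrm{sd}(K)$ and colour each of its vertices --- each being the barycentre of a unique simplex $\sigma \in K$ --- by $\dim \sigma + 1 \in \{1,\dots,d+1\}$. A facet of $K'$ is spanned by the barycentres of a maximal chain of faces of $K$, so it carries each of the $d+1$ colours exactly once; thus $K'$ is \emph{balanced}. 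Form the dual graph $\Gamma_0$: its vertices are the facets of $K'$, and two facets sharing a $(d-1)$-face $\tau$ are joined by an edge coloured by the unique colour missing from $\tau$. Then $\Gamma_0$ is a $(d+1)$-regular, properly edge-coloured graph with $\mathcal{K}(\Gamma_0) \cong K'$; it is connected (as $|K'|=M$ is), and it describes $M$, albeit with many vertices of each colour. In general $\mathcal{K}(\Gamma)$ has one vertex for each connected component of the subgraph of $\Gamma$ obtained by deleting a colour class, and $(\Gamma,\gamma)$ is a crystallization precisely when each such deletion leaves the graph connected.

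\textbf{The contraction move.} Call a colour-$c$ edge $e = xy$ of a $(d+1)$-coloured graph $\Gamma$ a \emph{$1$-dipole} if $x$ and $y$ lie in distinct connected components of the graph obtained by deleting all colour-$c$ edges; \emph{cancelling} it means deleting $x$, $y$, $e$ and, for each colour $c' \neq c$, replacing the two colour-$c'$ edges incident to $x$ and to $y$ by a single colour-$c'$ edge joining their other endpoints. I will use the classical invariance lemma (Pezzana; Ferri--Gagliardi): cancelling a $1$-dipole produces a $(d+1)$-coloured graph $\Gamma'$ which is again connected, has $|\mathcal{K}(\Gamma')| \cong_{\mathrm{PL}} |\mathcal{K}(\Gamma)|$, and for which $\mathcal{K}(\Gamma')$ has exactly two fewer vertices than $\mathcal{K}(\Gamma)$ (the two components of the deletion coalesce). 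Geometrically this just records that the pair of $d$-simplices of $\mathcal{K}(\Gamma)$ meeting along the dipole can be collapsed away without changing the PL type of the carrier.

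\textbf{Termination.} Suppose a connected $(d+1)$-coloured graph $\Gamma$ describes $M$ but is not a crystallization; then for some colour $c$ the deletion of the colour-$c$ edges disconnects $\Gamma$ into components $C_1,\dots,C_r$ with $r \geq 2$. The colour-$c$ edges form a perfect matching, so contracting each $C_j$ to a point yields a connected multigraph on $r \geq 2$ vertices and hence one containing an edge; that edge is a colour-$c$ $1$-dipole of $\Gamma$. Cancel it. Starting from $\Gamma_0$ and iterating, the number of vertices of $\mathcal{K}$ strictly decreases at every step while remaining $\geq d+1$, so after finitely many cancellations we reach a connected $(d+1)$-coloured graph $\Gamma^{\ast}$ describing $M$ with no $1$-dipole at all; by the contrapositive of the first sentence of this paragraph, $\mathcal{K}(\Gamma^{\ast})$ then has exactly $d+1$ vertices, i.e. $(\Gamma^{\ast},\gamma)$ is a crystallization of $M$.

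\textbf{The main obstacle.} The only non-formal ingredient is the invariance invoked above --- that cancelling a $1$-dipole alters neither the PL-homeomorphism type of the carrier nor the connectedness of the graph. This is exactly where the PL-manifold hypothesis is consumed; carrying it out in detail (by exhibiting the PL homeomorphism $|\mathcal{K}(\Gamma)| \to |\mathcal{K}(\Gamma')|$ explicitly, or a short sequence of stellar moves realising it) is the technical heart of the proof, while everything else is bookkeeping with colours together with the well-foundedness argument.
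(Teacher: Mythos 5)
The paper does not prove this statement at all: it is quoted as Proposition~\ref{prop:pezzana74} directly from Pezzana's paper \cite{pe74}, so there is no in-paper argument to compare yours against. Your proposal is the standard modern proof from the crystallization literature (essentially the Ferri--Gagliardi streamlining of Pezzana's original argument, as in \cite{fg82, fgg86}, both of which this paper cites), and as a sketch it is sound: the barycentric subdivision gives a balanced triangulation, its dual graph is a properly $(d+1)$-edge-coloured $(d+1)$-regular connected graph representing $M$, and iterated cancellation of $1$-dipoles terminates in a contracted graph. Your reduction of ``not contracted'' to ``contains a $1$-dipole'' is correct, and you rightly isolate the PL-invariance of $1$-dipole cancellation as the one nontrivial input; to make that step honest one should note that the cancellation is admissible because at least one of the two merging residues represents $S^{d-1}$ (all vertex links are PL $(d-1)$-spheres, a property preserved at every stage since the move merges two vertex links along a common $(d-1)$-simplex), which is exactly where the PL hypothesis is used. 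Two small corrections: cancelling a $1$-dipole removes two vertices from $\Gamma$ but only \emph{one} vertex from $\mathcal{K}(\Gamma)$ (two colour-$c$ residues coalesce into one, and the residue counts for the other colours are unchanged), so ``exactly two fewer vertices of $\mathcal{K}(\Gamma')$'' should read ``one fewer''; and the claim that the merged residue $X\#_{xy}Y$ is connected deserves a word (it follows because, for any two colours $c_1,c_2\neq c$, the $\{c_1,c_2\}$-cycle through $x$ shows all neighbours of $x$ lie in one component of $X\setminus\{x\}$, whence $X\setminus\{x\}$ is connected). Neither affects the well-foundedness of your termination argument.
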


Thus, every connected closed pl $d$-manifold has a {\em contracted} pseudotriangulation, i.e., a
pseudotriangulation with $d+1$ vertices. In this article, we are interested on crystallizations of connected
closed 3-manifolds with minimum number of vertices.

In \cite{ep61}, Epstein proved that the fundamental group of a 3-manifold has a presentation with the number of
relations less than or equal to the number of generators. For such a group $G$, let $\psi(G)$ be weight of $G$ as
in Definition \ref{def:weight} below. Then the weight of the trivial group is 2 and $\psi(G) \geq 8$ for any
nontrivial group $G$.

\begin{defn} \label{def:psi(M)}
{\rm For a connected closed 3-manifold $M$, let $\psi(M)$ be the {\em weight} $\psi(\pi(M, x))$ of the group
$\pi(M, x)$ for some $x$ in $M$. }
\end{defn}

If $M$ and $N$ are homeomorphic then clearly $\psi(M) = \psi(N)$. Thus, $\psi(M)$ is a topological invariant.
Clearly, $ \psi(S^{\hspace{.15mm}3}) =2$ and, in view of Perelman's theorem (Poincar\'{e} conjecture)
\cite{pe03}, $\psi(M) \geq 8$ for $M \neq S^{\hspace{.15mm}3}$.
Here, we have the following.

\begin{lemma} \label{lemma:psi(M)}
Let $\psi(M)$ be as above and let $Q_8$ be the quaternion group $\{\pm 1, \pm i, \pm j, \pm k\}$. Then
$\psi(\mathbb{R P}^3) = \psi(S^2 \times S^1)  = \psi(\TPSS)= 8$, $\psi(L(3,1)) = 12$, $\psi(L(5,q)) = 16$,
$\psi(S^{\hspace{.3mm}3}/Q_8)$ $= 18$, $\psi(S^1 \times S^1 \times S^1)
= 24$ for $1\leq q\leq 2$.
\end{lemma}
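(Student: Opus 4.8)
The plan is to compute $\psi(M)$ for each of the seven manifolds by exhibiting an efficient presentation of $\pi_1(M)$ (giving an upper bound on $\psi$) and then arguing that no cheaper presentation exists (giving the matching lower bound). The upper bounds should be routine: for $\mathbb{R}P^3 = L(2,1)$ we use $\langle x \mid x^2\rangle$; for $L(3,1)$ the presentation $\langle x\mid x^3\rangle$; for $L(5,q)$ with $1\le q\le 2$ the presentation $\langle x\mid x^5\rangle$ (note $L(5,1)\cong L(5,4)$ and $L(5,2)\cong L(5,3)$, and $\pi_1 = \mathbb{Z}_5$ in all cases, which is why the statement writes $L(5,q)$ for $1\le q\le 2$); for $S^2\times S^1$ the presentation $\langle x\mid\ \rangle$ of $\mathbb{Z}$ (one generator, no relation); for $\TPSS$ the same underlying group $\mathbb{Z}$, hence the same weight $8$; for $S^3/Q_8$ the standard presentation $\langle x,y\mid x^2 = y^2,\ xyx^{-1} = y^{-1}\rangle$ of $Q_8$; and for $S^1\times S^1\times S^1$ the presentation $\langle x,y,z\mid [x,y],[y,z],[z,x]\rangle$ of $\mathbb{Z}^3$. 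Feeding each of these into Definition~\ref{def:weight} of $\psi(G)$ produces the claimed numbers $8,12,16,8,8,18,24$; I would tabulate the weight contribution of each relation (a relation of the form $x^n$ contributing roughly $4n$ and a commutator relation contributing a fixed amount) and add the per-generator cost, and check the arithmetic matches in each of the seven cases.

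The substantive direction is the lower bound, i.e. showing these presentations are weight-optimal. First I would use the general facts already quoted in the excerpt: $\psi(G)\ge 8$ for every nontrivial $G$, which immediately settles $\mathbb{R}P^3$, $S^2\times S^1$ and $\TPSS$ since their presentations achieve $8$. For the remaining four I would argue by the structure of the group. For $\mathbb{Z}_n$ ($n = 3,5$) I would show that any presentation with at most as many relations as generators and small weight must, after Tietze moves, reduce to a one-relator presentation $\langle x\mid x^m\rangle$ with $m$ a multiple of $n$ — using that $\mathbb{Z}_n$ is cyclic of order exactly $n$, so the "total exponent content" of the relations is bounded below by what is needed to kill a $\mathbb{Z}$ down to $\mathbb{Z}_n$ — and then that the weight is monotone in $m$, forcing $m = n$. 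For $Q_8$ and $\mathbb{Z}^3$ I would similarly bound below the number of generators (the minimal number of generators of $Q_8$ is $2$, of $\mathbb{Z}^3$ is $3$) and the number and "size" of relations needed, using abelianization and order considerations to rule out any presentation of smaller weight.

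The main obstacle will be the lower bound argument for $L(3,1)$, $L(5,q)$, $S^3/Q_8$ and $\mathbb{Z}^3$: one must show that no clever alternative presentation — possibly with more generators but shorter relations, or with relations that are not simply powers — beats the obvious one in the weight function $\psi$. This requires understanding how $\psi(G)$ behaves under Tietze transformations (adding/removing a generator together with a defining relation, or conjugating/inverting/multiplying relations) and establishing that $\psi$ can be computed by minimizing over a controlled family of "reduced" presentations. I expect to handle this by a case analysis on the number of generators $g$ (which is at least the rank, i.e. minimal generator number, of $G$) and, for each fixed small $g$, on the multiset of relator lengths, using the order of $G$ and of its abelianization to lower-bound the exponent sums. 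The finite groups ($\mathbb{Z}_3,\mathbb{Z}_5,Q_8$) are easiest here because there are only finitely many presentations up to a weight cutoff to examine; $\mathbb{Z}^3$ needs the extra observation that three commutator relations are both necessary and individually minimal.
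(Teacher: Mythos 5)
Your overall reduction --- replace each manifold by its fundamental group and compute the group weight of $\mathbb{Z}$, $\mathbb{Z}_2$, $\mathbb{Z}_3$, $\mathbb{Z}_5$, $Q_8$, $\mathbb{Z}^3$ --- is exactly the route the paper takes (it reduces the statement to Lemma \ref{lemma:psi(G)}). But your execution for $L(5,q)$ contains a genuine error that your own ``check the arithmetic'' step would expose. By \eqref{varphi(S,R)}, the one-generator presentation $\langle x \mid x^5\rangle$ has $\varphi(S,R)=\lambda(x^5)+\lambda(r_2)=10+10=20$, not $16$; indeed $\psi(\mathbb{Z}_5;1)=20>12=6(\mu(\mathbb{Z}_5)+1)$, so $\rho(\mathbb{Z}_5)=2$, and the value $16$ is attained only by a \emph{two}-generator presentation such as $\langle x_1,x_2 \mid x_1^2x_2^{-1},\, x_2^3x_1^{-1}\rangle$. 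Consequently your lower-bound strategy --- reduce any presentation by Tietze moves to a one-relator presentation $\langle x\mid x^m\rangle$ and invoke monotonicity in $m$ --- cannot work: if it did, it would prove $\psi(\mathbb{Z}_5)\geq 20$, contradicting the correct value $16$. The weight $\varphi$ is not monotone under Tietze transformations, and for $\mathbb{Z}_5$ the minimizer genuinely uses more generators than the rank of the group.

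The underlying issue is that you are implicitly treating $\psi(G)$ as the minimum of $\varphi$ over all admissible presentations, which it is not. Definition \ref{def:weight} sets $\psi(G)=\max\{\psi(G;\rho(G)),\,6\mu(G)+2\}$, where $\rho(G)$ is the least $q$ with $\psi(G;q)\leq 6(q+1)$; Remark \ref{remark:R0} stresses the resulting gap (the global minimum of $\varphi$ for $\mathbb{Z}$ is $4$, yet $\psi(\mathbb{Z})=8$). Your answers for $\mathbb{R}P^3$, $S^2\times S^1$ and the twisted $S^2$-bundle come out right only because of the $6\mu(G)+2$ floor, not because your presentations have $\varphi=8$ (the empty-relator presentation of $\mathbb{Z}$ has $\varphi=4$). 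A correct proof must, for each group, first pin down $\rho(G)$ by exhibiting a presentation with $\varphi(S,R)\leq 6(q+1)$ at the right $q$, and then run the lower-bound case analysis over presentations with that many generators, classifying the finitely many words of weight $4$ and $6$ up to dependence and checking which pairs or triples can present the group --- this is the substantive content of the paper's Lemma \ref{lemma:psi(G)}, and it is absent from, and for $\mathbb{Z}_5$ contradicted by, your plan. Your sketches for $\mathbb{Z}_3$, $Q_8$ and $\mathbb{Z}^3$ are closer in spirit (finitely many short relators to examine; no short words in $N(R_0)$ for $\mathbb{Z}^3$), but they too must be run inside this $\rho$-constrained minimization rather than over all presentations.
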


For a $d$-dimensional simplicial cell complex $K$, let $f_j(K)$ denote the number of $j$-cells of $K$ for $0\leq
j\leq d$. Let $m(K)$ be the minimal number of generators of $\pi(|K|, \ast)$. If ${\rm lk}_{K}(\alpha) :=
\{\beta\in K \, : \, \alpha \subseteq \beta\}$ (the {\em link} of $\alpha$) is connected for each cell $\alpha$
of codimension $\geq 2$ then $K$ is said to be a {\em normal pseudomanifold}. Clearly, a pseudotriangulation of a
connected manifold is a normal pseudomanifold. In \cite{ns09}, Novik and Swartz proved (Kalai's conjecture) that
$g_2(K) := f_1(K) -(d+1)f_0(K) + \binom{d+2}{2} \geq \binom{d+2}{2}\beta_1(K; \mathbb{F})$ for any
$\mathbb{F}$-orientable $\mathbb{F}$-homology manifold $K$ of dimension $d\geq 3$. Consider the contracted
pseudotriangulation $K_1 := \mathcal{K}(\mathcal{J}_1)$ of $S^2\times S^1$ corresponding to the crystallization
$\mathcal{J}_1$ in Fig. \ref{fig:J12} below. Since $f_3(K_1) =8$, it follows that $f_1(K_1)=12$. Therefore,
$g_2(K_1) = 12-16+10=6 < 10 = \binom{3+2}{2}\beta_1(S^2 \times S^1; \mathbb{Q}\hspace{.3mm})$. Thus, Novik-Swartz
theorem is not true for pseudotriangulations of manifolds. In \cite{kl09}, Klee proved that $h_2(K) := f_1(K)
-df_0(K) + \binom{d+1}{2} \geq \binom{d+1}{2}m(K)$ for any $d$-dimensional normal pseudomanifold $K$ whose edge
graph is $(d+1)$-colorable. For a connected closed pl $d$-manifold $M$, let $\Psi(M) := \min\{m \, : \, M$ has a
crystallization with $m$ vertices$\} = \min\{f_d(K) \, : \, K$ is a contracted pseudotriangulation of $M\}$. Here
we have the following.

\begin{theo} \label{theorem:T1}
Let $M$ be a connected closed $3$-manifold. If $(\Gamma, \gamma)$ is a crystallization of $M$ then $\Gamma$ has
at least $\psi(M)$ vertices. Equivalently, if $X$ is a contracted pseudotriangulation of $M$ then $f_3(X) \geq
\psi(M)$.
\end{theo}

\begin{cor} \label{cor:h2>=6m}
Let $M$ be a connected closed $3$-manifold $M$ and $\mathbb{F}$ be a field. If $X$ is a contracted
pseudotriangulation of $M$ then $g_2(X) = h_2(X) \geq \Psi(M) -2\geq \psi(M)-2 \geq 6m(M) \geq 6\beta_1(M;
\mathbb{F})$.
\end{cor}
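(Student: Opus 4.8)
The plan is to establish the displayed chain one link at a time. The first link $g_2(X)=h_2(X)$, and the equality of both with $f_3(X)-2$, come from a short computation with the $f$-vector of a contracted pseudotriangulation of a closed $3$-manifold; the remaining links $f_3(X)-2\ge\Psi(M)-2\ge\psi(M)-2\ge 6m(M)\ge 6\beta_1(M;\mathbb{F})$ are formal consequences of, respectively, the definition of $\Psi(M)$, Theorem \ref{theorem:T1}, the definition of the weight (Definition \ref{def:weight}), and a standard fact about $H_1$.

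First I would use that $X$ is contracted, so $f_0(X)=4=d+1$, and that $X$, being a pseudotriangulation of a closed manifold, is a $3$-dimensional normal pseudomanifold, so every $2$-cell is a face of exactly two tetrahedra; counting incidences between $2$-cells and tetrahedra (the four triangular faces of each tetrahedron are four distinct $2$-cells, since for a crystallization the $2$-cells arising from edges of different colours have different vertex sets) gives $4f_3(X)=2f_2(X)$, i.e. $f_2(X)=2f_3(X)$. Since $M$ is a closed odd-dimensional manifold, $\chi(M)=0$, so $0=f_0(X)-f_1(X)+f_2(X)-f_3(X)=4-f_1(X)+f_3(X)$ and hence $f_1(X)=f_3(X)+4$. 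Substituting $f_0(X)=4$ and $d=3$ in the definitions, $g_2(X)=f_1(X)-4\cdot4+\binom{5}{2}=f_1(X)-6$ and $h_2(X)=f_1(X)-3\cdot4+\binom{4}{2}=f_1(X)-6$; therefore $g_2(X)=h_2(X)=f_1(X)-6=f_3(X)-2$.

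Now $X$ is a contracted pseudotriangulation of $M$, so $f_3(X)\ge\Psi(M)$ by the definition of $\Psi(M)$, which gives the second link. By Theorem \ref{theorem:T1} every crystallization of $M$ has at least $\psi(M)$ vertices, and since $\Psi(M)$ is the minimum number of vertices over all crystallizations of $M$, we get $\Psi(M)\ge\psi(M)$, the third link. For the fourth link, Definition \ref{def:weight} yields $\psi(M)\ge 6m(M)+2$: any presentation of $\pi(M,\ast)$ entering the definition of $\psi(M)$ uses at least $m(M)$ generators, and its weight is at least $6$ for each generator together with the additive constant $2$. Finally, $H_1(M;\mathbb{F})\cong H_1(M;\mathbb{Z})\otimes_{\mathbb{Z}}\mathbb{F}$ because $H_0(M;\mathbb{Z})=\mathbb{Z}$ is torsion-free, and $H_1(M;\mathbb{Z})\cong\pi(M,\ast)^{\mathrm{ab}}$ is generated by $m(M)$ elements; hence $\beta_1(M;\mathbb{F})=\dim_{\mathbb{F}}H_1(M;\mathbb{F})\le m(M)$, so $6m(M)\ge6\beta_1(M;\mathbb{F})$. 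Concatenating the links proves the corollary.

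The only step with real content is the identity $f_1(X)=f_3(X)+4$, equivalently $g_2(X)=h_2(X)=f_3(X)-2$; this is exactly what converts the combinatorial quantities $g_2,h_2$ into a statement about the number of facets, and it rests solely on $f_0(X)=d+1$, on $\chi(M)=0$, and on the closed-pseudomanifold incidence relation $f_2(X)=2f_3(X)$. Everything after it is bookkeeping together with Theorem \ref{theorem:T1}; the one place to be careful is to invoke the right consequence of Definition \ref{def:weight}, namely the lower bound of $6$ per generator plus $2$, since this is what is needed to reach $6m(M)$ rather than merely the crude bound $\psi(M)\ge 8$.
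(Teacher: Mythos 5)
Your proof is correct and follows essentially the same route as the paper's: compute $f_0=4$, $f_2=2f_3$ and $\chi=0$ to get $g_2(X)=h_2(X)=f_3(X)-2$, then chain the definitions of $\Psi$, Theorem \ref{theorem:T1}, Definition \ref{def:weight} and the abelianization bound on $\beta_1$. One small caution on the link $\psi(M)\geq 6m(M)+2$: this comes from the explicit term $6\mu(G)+2$ in the $\max$ defining $\psi(G)$ together with $\mu(G)\geq m(G)$, not from a claim that every presentation has weight at least $6$ per generator plus $2$ (which is false, e.g.\ $\varphi(\{x\},\emptyset)=4$ for $\mathbb{Z}$, cf.\ Remark \ref{remark:R0}) --- your gloss reads as the latter, but the inequality you actually use is the correct one.
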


From the complete enumeration (obtained by using high-powered computers) of crystallizations of prime 3-manifolds
with at most 30 vertices, we know $\Psi(M)$ for all closed prime 3-manifolds $M$ with $\Psi(M) \leq 30$ (cf.
\cite{cc08, li95}). Let $G$ be a group which has a presentation with number of relations is at most the number of
generators. From Theorem \ref{theorem:T1} we know that the number of vertices in any crystallization  $(\Gamma,
\gamma)$ of a closed connected 3-manifold $M$, whose fundamental group is $G$, has at least $\psi(G)$ vertices.
To construct such a crystallization, we choose a presentation $\langle S, R\rangle$ of $G$ such that
$\varphi(S,R) = \psi(G, \rho(G))$. We then construct $(\Gamma, \gamma)$ which yields the presentation $\langle S,
R\rangle$ as mentioned at the end of Section \ref{crystal}. We have considered the groups $\mathbb{Z}$,
$\mathbb{Z}_2$, $\mathbb{Z}_3$, $\mathbb{Z}_5$, $\mathbb{Z}^3$ and $Q_8$ and have obtained such crystallizations.
Generalizing some of these constructions, we have constructed two infinite families of crystallizations of lens
spaces. More explicitly, we have the following.

\begin{theo} \label{theorem:T2}
Let $\psi(M)$ and $\Psi(M)$ be as above.
\begin{enumerate}[{\rm (i)}]
\item If $M = \mathbb{R P}^3$, $S^2 \times S^1$, $\TPSS$, $L(3,1)$,
$L(5,2)$, $S^{\hspace{.3mm}3}/Q_8$ or $S^1
\times S^1 \times S^1$ then $\Psi(M) = \psi(M)$ and $M$ has a unique contracted pseudotriangulation with
$\psi(M)$ facets.

\item Let $X$ be a contracted pseudotriangulation of a connected closed $3$-manifold $M$. If $f_3(X) \leq 8$ then
$M$ is $($homeomorphic to$)$ $S^3$, $\mathbb{R P}^3$, $S^2 \times S^1$ or $\TPSS$.
\end{enumerate}
\end{theo}

\begin{cor} \label{cor:h2>6m}
Let $X$ be a contracted pseudotriangulation of a closed $3$-manifold
$M$. If $M$ is $S^{\hspace{.3mm}3}/Q_8$, $S^1 \times S^1 \times S^1$
or $L(p,q)$ for some $p\geq 3$ then $h_2(X) > 6m(M)$.
\end{cor}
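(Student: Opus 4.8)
The plan is to combine the equality $g_2(X) = h_2(X)$ from Corollary \ref{cor:h2>=6m} with the chain of inequalities $h_2(X) \geq \Psi(M) - 2 \geq \psi(M) - 2 \geq 6m(M)$ established there, and to show that for the three families of manifolds in question the composite inequality is \emph{strict}. It suffices to locate, in each case, one link of the chain where strictness holds and to argue that no later link can lose it. Since $\Psi(M) - 2 \geq 6m(M)$ always, and $h_2(X) \geq \Psi(M) - 2$ always, strictness at \emph{any} single step propagates to $h_2(X) > 6m(M)$; so the whole proof reduces to the arithmetic comparison of $\psi(M)$ (or $\Psi(M)$) with $6m(M)$, together with the known values of $m(M)$.

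First I would record the minimal numbers of generators: $m(S^{3}/Q_8) = 2$, $m(S^1 \times S^1 \times S^1) = 3$, and $m(L(p,q)) = 1$ for $p \geq 3$ (the fundamental group being $\mathbb{Z}_p$, which is cyclic but nontrivial). Then I would invoke Lemma \ref{lemma:psi(M)} and the surrounding discussion: $\psi(S^{3}/Q_8) = 18 > 12 = 6 \cdot 2$; $\psi(S^1 \times S^1 \times S^1) = 24$, and although $6 m(M) = 18$ here so $\psi(M) > 6m(M)$ again; and for lens spaces, $\psi(L(p,q)) \geq 8$ for any nontrivial group (stated in the paragraph before Definition \ref{def:psi(M)}), while $6 m(L(p,q)) = 6$, so $\psi(L(p,q)) - 2 \geq 6 > 6 = 6m(M)$ — here I must be a little careful, since $8 - 2 = 6 = 6m(M)$ gives only $\geq$, not $>$. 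So for lens spaces the argument through $\psi$ alone is too weak, and I would instead observe that any nontrivial group has $\psi(G) \geq 8$ but in fact, by the structure of the weight function in Definition \ref{def:weight}, a presentation of $\mathbb{Z}_p$ with $p \geq 3$ forces a strictly larger weight — one would check from $\psi(L(3,1)) = 12$ and $\psi(L(5,q)) = 16$ in Lemma \ref{lemma:psi(M)} that the bound $8$ is attained only by $\mathbb{Z}$ and $\mathbb{Z}_2$ among abelian cyclic groups, so $\psi(L(p,q)) \geq 12$ whenever $p \geq 3$, giving $h_2(X) \geq \psi(M) - 2 \geq 10 > 6 = 6m(M)$.

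The main obstacle, then, is not the $Q_8$ or the torus case — those are immediate from Lemma \ref{lemma:psi(M)} — but the lens space case, where I need the sharper statement that $\psi(\mathbb{Z}_p) \geq 12$ for every $p \geq 3$ rather than the generic $\psi \geq 8$. I expect this to follow by unwinding Definition \ref{def:weight}: a cyclic group $\mathbb{Z}_p$ with $p \geq 3$ has no presentation of weight $8$ because weight $8$ is realized only by $\mathbb{Z}$ (presentation $\langle a \mid \ \rangle$) and $\mathbb{Z}_2$ (presentation $\langle a \mid a^2 \rangle$), the relator $a^2$ being the shortest nontrivial relator available; any relator $a^p$ with $p \geq 3$ contributes more to the weight, and the precise count pushes $\psi$ up to at least $12$, matching $\psi(L(3,1)) = 12$. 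Once that is in hand, the corollary is just the four-term comparison, and I would present it as a short case analysis on $M$, citing Corollary \ref{cor:h2>=6m} for the $g_2 = h_2$ identity and the inequality chain, and Lemma \ref{lemma:psi(M)} together with the weight-function lower bounds for the strictness.
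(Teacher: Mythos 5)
Your treatment of $S^3/Q_8$ and $S^1\times S^1\times S^1$ is exactly the paper's: cite Corollary \ref{cor:h2>=6m} and Lemma \ref{lemma:psi(M)} and observe $18-2>12=6m(Q_8)$ and $24-2>18=6m(\mathbb{Z}^3)$. The problem is the lens-space case, where you correctly diagnose that $\psi(L(p,q))\geq 8$ only yields $h_2(X)\geq 6$, but the fix you propose has a genuine gap. You need the statement $\psi(\mathbb{Z}_p)\geq 12$ for every $p\geq 3$, and this is neither proved in the paper (Lemma \ref{lemma:psi(M)} computes $\psi$ only for $\mathbb{Z}_3$ and $\mathbb{Z}_5$; the values for $\mathbb{Z}_4,\mathbb{Z}_6,\mathbb{Z}_7$ are merely quoted in Remark \ref{remark:swartz} with proofs omitted) nor proved by you. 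Your justification --- that a relator $a^p$ with $p\geq 3$ forces a larger weight --- only rules out one-generator presentations, where indeed $\varphi(S,R)=4p\geq 12$. But $\psi(\mathbb{Z}_p)$ is a minimum of $\varphi(S,R)$ over presentations with up to $\rho(\mathbb{Z}_p)$ generators, and $\rho(\mathbb{Z}_p)>1$ already for $p=5$. To exclude a two- or three-generator presentation with $\varphi(S,R)\leq 10$ one must control weight-$2$ relators, generator elimination, and the auxiliary relator $r_{t+1}\in\overline{R}$; that is precisely the delicate case analysis the paper carries out separately for each small group in Lemma \ref{lemma:psi(G)}, and nothing in your sketch does it uniformly in $p$.

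The paper's own proof of the lens-space case avoids the group-theoretic estimate altogether. It invokes Theorem \ref{theorem:T2}(ii), which rests on the combinatorial classification in Lemma \ref{unique:J12} of all $8$-vertex crystallizations: any contracted pseudotriangulation with $f_3\leq 8$ carries $S^3$, $\mathbb{RP}^3$, $S^2\times S^1$ or the twisted $S^2$-bundle over $S^1$. Hence for $M=L(p,q)$ with $p\geq 3$ every contracted pseudotriangulation satisfies $f_3(X)>8$, so $h_2(X)=f_3(X)-2>6=6m(M)$. In other words, the strictness is obtained at the level of $\Psi(M)$ rather than $\psi(M)$, by a combinatorial fact already established in the paper. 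If you want to salvage your architecture, replace the unproved claim $\psi(\mathbb{Z}_p)\geq 12$ by this appeal to Theorem \ref{theorem:T2}(ii); as you yourself note, strictness at any single link of the chain in Corollary \ref{cor:h2>=6m} suffices.
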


\begin{theo} \label{theorem:T4}
Let $\Psi(M)$ be as above. Then
\begin{enumerate}[{\rm (i)}]
\item $\Psi(L(kq-1,q)) \leq 4(k+q-1)$ for $k, q\geq 2$ and \item  $\Psi(L(kq+1,q)) \leq 4(k+q)$ for $k, q\geq 1$.
\end{enumerate}
\end{theo}

\begin{remark} \label{remark:swartz}
{\rm Recently, Swartz proved that $\Psi(L(kq+1,q)) \geq 4(k+q)$ whenever $k, q$ are odd (\cite{sw13}). Thus,
$\Psi(L(kq+1,q)) = 4(k+q)$ for odd positive integers $k, q$. We found that $\Psi(L(5,1))= 20 = \Psi(L(7,2))$. So,
Swartz's bound is also valid for $L(5,1)$ and $L(7,2)$. We also found that $\psi(\mathbb{Z}_4) =14$ and
$\psi(\mathbb{Z}_6) = \psi(\mathbb{Z}_7) =18$. Proofs of these are in earlier versions of this article in the
arXiv (arXiv:1308.6137). We have omitted these proofs from this version for the sake of brevity.}
\end{remark}

\section{Preliminaries}

\subsection{Coloured Graphs}

All graphs considered here are finite multigraphs without loops. If $\Gamma = (V, E)$ is a graph and $U \subseteq
V$ then the {\em induced} subgraph $\Gamma[U]$ is the subgraph of $\Gamma$ whose vertex set is $U$ and edges are
those edges of $\Gamma$ whose end points are in $U$. For $n\geq 2$, an $n$-cycle is a closed path with $n$
distinct vertices and $n$ edges. If vertices $a_i$ and $a_{i+1}$ are adjacent in an $n$-cycle for $1\leq i\leq n$
(addition is modulo $n$) then the $n$-cycle is denoted by $C_n(a_1, a_2, \dots, a_n)$.

An {\em edge coloring} of  a graph $\Gamma = (V, E)$ is a map $\gamma \colon E \to C$ such that $\gamma(e) \neq
\gamma(f)$ whenever $e$ and $f$ are adjacent (i.e., $e$ and $f$ are adjacent to a common vertex). The elements of
$C$ are called the {\em colors}. If $C$ has $h$ elements then $(\Gamma, \gamma)$ is said to be an {\em
$h$-colored graph}.

Let $(\Gamma,\gamma)$ be an $h$-colored graph with color set $C$. If $B \subseteq C$ with $k$ elements then the
graph $(V(\Gamma), \gamma^{-1}(B))$ is a $k$-colored graph with coloring $\gamma|_{\gamma^{-1}(B)}$. This colored
graph is denoted by $\Gamma_B$. Let $(\Gamma,\gamma)$ be an $h$-colored connected graph with color set $C$. If
$\Gamma_{C\setminus\{c\}}$ is connected for all $c\in C$ then  $(\Gamma,\gamma)$ is called {\em contracted}.

Let $\Gamma_1 = (V_1, E_1)$ and $\Gamma_2=(V_2, E_2)$ be two disjoint $h$-regular $h$-colored graphs with same
color set $\{1, \dots, h\}$. For $1\leq i\leq 2$, let $v_i \in V_i$. Consider the graph $\Gamma$ which is
obtained from $(\Gamma_1 \setminus\{v_1\}) \sqcup (\Gamma_2 \setminus \{v_2\})$ by adding $h$ new edges $e_1,
\dots, e_h$ with colors $1, \dots, h$ respectively such that the end points of $e_j$ are $u_{j,1}$ and $u_{j,
2}$, where $v_i$ and $u_{j,i}$ are joined in $\Gamma_i$ with an edge of color $j$ for $1\leq j\leq h$, $1\leq
i\leq 2$. (Here $\Gamma_i \setminus\{v_i\} = \Gamma_i[V_i\setminus\{v_i\}]$.) The colored graph $\Gamma$ is
called the {\em connected sum} of $\Gamma_1$, $\Gamma_2$ and is denoted by $\Gamma_1\#_{v_1v_2}\Gamma_2$.

Let $\Gamma = (V, E)$ be a $(d+1)$-regular graph with a $(d+1)$-coloring $\gamma \colon E \to C$. Let $x, y\in V$
be joined by $k$ edges $e_1, \dots, e_k$, where $1\leq k\leq d$. Let $B = C\setminus \gamma(\{e_1, \dots,
e_k\})$. Let $X$ (resp., $Y$) be the components of $\Gamma_B$ containing $x$ (resp., $y$). If $X \neq Y$ then
$\Gamma[\{x, y\}]$ is called a {\em $d$-dimensional dipole of type $k$}. Dipoles of types 1 and $d$ are called
{\em degenerate} dipoles.

Let $\Gamma=(V,E)$ be a $(d+1)$-regular graph with a $(d+1)$-coloring $\gamma \colon E \to C$ and a dipole
$\Gamma[\{x, y\}]$ of type $k$. Let $B$, $X$ and $Y$ be as above. A $(d+1)$-regular graph $(\Gamma^{\prime},
\gamma^{\prime})$ with same color set $C$ is said to {\em obtained from $\Gamma$ by cancelling the dipole
$\Gamma[\{x, y\}]$} if (i) $\Gamma^{\prime}_B$ is obtained from $\Gamma_B$ by replacing $X\sqcup Y$ by $X\#_{xy}
Y$, and (ii) two vertices $u, v$ of $\Gamma^{\prime}$ are joined by an edge of  color $c\in B$ if and only if the
corresponding vertices of $\Gamma$ are  so (cf. \cite{fg82}). For standard terminology on graphs see \cite{bm08}.

\subsection{Presentation of Groups}

Given a set $S$, let $F(S)$ denote the free group generated by $S$. So, any element $w$ of $F(S)$ is of the form
$w = x_1^{\varepsilon_1} \cdots x_m^{\varepsilon_m}$, where $x_1, \dots, x_m\in S$ and $\varepsilon_i = \pm 1$
for $1\leq i\leq m$ and $(x_{j+1}, \varepsilon_{j+1}) \neq (x_j, -\varepsilon_j)$ for $1\leq j \leq m-1$. For $R
\subseteq F(S)$, let $N(R)$ be the smallest normal subgroup of $F(S)$ containing $R$. Then the quotient group
$F(S)/N(R)$ is denoted by $\langle S \, | \, R \rangle$. So, $\langle S \, | \, T \rangle = \langle S \, | \, R
\rangle$ if $N(T) = N(R)$. We write $\langle S_1 \, | \, R_1\rangle = \langle S_2 \, | \, R_2\rangle$ only when
$F(S_1) = F(S_2)$ and $N(R_1) = N(R_2)$. For $w_1, w_2\in F(S)$, if $w_1N(R) = w_2N(R)\in \langle S \, | \,
R\rangle$ then we write $w_1 \equiv w_2$ (mod $R$). Two elements $w_1, w_2\in F(S)$ are said to be {\em
independent} (resp., {\em dependent}) if $N(\{w_1\}) \neq N(\{w_2\})$ (resp., $N(\{w_1\}) = N(\{w_2\})$).

For a finite subset $R$ of $F(S)$, let
\begin{align}
\overline{R} := \{w \in N(R) \, : \, N((R \setminus \{r\}) \cup \{w\}) = N(R) \mbox{ for each } r\in R\}.
\end{align}
Observe that $\overline{\emptyset} = \emptyset$ and if $R\neq \emptyset$ is a finite set then $w :=\prod_{r\in R}
r \in \overline{R}$ and hence $\overline{R} \neq \emptyset$. Also, $\{wrw^{-1}, wr^{-1}w^{-1} \, : \, w\in F(S)\}
\subseteq \overline{\{r\}}$ for $r\in F(S)$.

For $w = x_1^{\varepsilon_1}\cdots x_m^{\varepsilon_m} \in F(S)$, $m\geq 1$, let
\begin{align}
\varepsilon(w) :=
\left\{ \begin{array}{lcl}
0 & \mbox{if} & m = 1, \\
|\varepsilon_1-
\varepsilon_2| + \cdots + |\varepsilon_{m-1}-\varepsilon_m| +
|\varepsilon_m - \varepsilon_1| & \mbox{if}
& m \geq 2.
\end{array}\right. \nonumber
\end{align}
Consider the map $\lambda \colon F(S) \to \mathbb{Z}^{+}$ define inductively as follows.
\begin{eqnarray}\label{l(w)}
\lambda(w) := \left\{ \begin{array}{lcl}
2 & \mbox{if} & w = \emptyset, \\
2m-\varepsilon(w) & \mbox{if}
& w = x_1^{\varepsilon_1}\cdots
x_m^{\varepsilon_m}, \, (x_m, \varepsilon_m) \neq (x_1, -\varepsilon_1), \\
\lambda(w^{\hspace{.1mm}\prime})
& \mbox{if} & w = x_1^{\varepsilon_1} w^{\hspace{.1mm}\prime} x_1^{-\varepsilon_1}.
\end{array}\right.
\end{eqnarray}
Since $|\varepsilon_i - \varepsilon_j|=0$ or 2, $\varepsilon(w)$
is an even integer and hence $\lambda(w)$ is
also even. For $w\in F(S)$, $\lambda(w)$ is said to be the {\em weight} of $w$. Observe that $\lambda(w_1w_2) =
\lambda(w_2w_1)$ for $w_1, w_2\in F(S)$.

Let $S = \{x_1, \dots, x_s\}$ and $R= \{r_1, \dots, r_t\} \subseteq F(S)$,  where $t\leq s$. Let $r_{t+1}$ be an
element in $\overline{R}$ of minimum weight. Let
\begin{align} \label{varphi(S,R)}
\varphi(S,R) := \lambda(r_1) +
\cdots + \lambda(r_t) + \lambda(r_{t+1}) + 2(s-t).
\end{align}

For a finitely presented group $G$ and a non-negative integer $q$, we define
\begin{align}
{\mathcal P}_q(G) := \{\langle S \, | \, R\rangle \cong G \, : \#(R)
\leq \#(S) \leq q\}. \nonumber
\end{align}
For a finitely presented group $G$, let $m(G)$ be the minimum number of
generators of $G$.
Here, we are interested on those groups $G$ for which ${\mathcal P}_q(G)\neq \emptyset$ for some $q$. Let
\begin{subequations}
\begin{align}
\mu(G) & := \min\{q \, : \, {\mathcal P}_q(G)\neq\emptyset\},\label{mu(G)}\\
\psi(G; q) & :=   \min\{\varphi(S, R) \, : \,  \langle S \, | \, R\rangle
\in {\mathcal P}_q(G)\}
\mbox{ for } q \geq \mu(G).   \label{psi(G;q)}
\end{align}
\end{subequations}
Clearly, $\mu(G) \geq m(G)$ and $\psi(G,q) \leq \psi(G,\mu(G))$ for all $q\geq \mu(G)$. Let
\begin{align}
\rho(G) & :=  \min\{q \geq \mu(G) \, : \, \psi(G;q) \leq 6(q+1)\}. \label{rho(G)}
\end{align}
So, $\rho(G)$ is the smallest integer $q$ such that $\psi(G; q) \leq 6(q+1)$.

\begin{defn} \label{def:weight}
{\rm Let $G$ be a group which has a presentation with the number of relations less than or equal to the number of
generators. Let $\mu(G)$, $\psi(G; q)$ and $\rho(G)$ be as above. Then $\psi(G) =  \max\{\psi(G; \rho(G)),
6\mu(G) +2\}$ is a positive even integer. The integer $\psi(G)$ is said to be the {\em weight} of the group $G$.
}
\end{defn}

\begin{remark} \label{remark:R0}
{\rm Observe that $\min\{\varphi(S, R) \, : \,
\langle S \, | \, R\rangle \cong \mathbb{Z}, \, \#(R)
\leq \#(S) < \infty \} = 4 =
\psi(\mathbb{Z}, \rho(\mathbb{Z})) < 8 =
\psi(\mathbb{Z})$ (see the proof of Lemma \ref{lemma:psi(M)}). In general, we have  $\min\{\varphi(S, R) \, : \,
\langle S \, | \, R\rangle \cong G, \, \#(R)
\leq \#(S) < \infty \} =
\min\{\min\{\varphi(S, R) \, : \,
\langle S \, | \, R\rangle \in \mathcal{P}_q(G)\}\, : \, \mu(G) \leq q <\infty\}
= \min\{\psi(G;q)\, : \,\mu(G) \leq  q <\infty\} \leq
\psi(G; \rho(G)) \leq \psi(G)$.
}
\end{remark}

\subsection{Lens Spaces}

Consider the 3-sphere $S^{\hspace{.2mm}3} = \{(z_1, z_2) \in \mathbb{C}^{\hspace{1mm}2} \, : \, |z_1|^2 + |z_2|^2
= 1\}$. Let $p$ and $q$ be relatively prime integers. Then the action of $\mathbb{Z}_p = \mathbb{Z}/p\mathbb{Z}$
on $S^3$ generated by $e^{2\pi i/p}\cdot(z_1, z_2) = (e^{2\pi i/p}z_1, e^{2\pi iq/p}z_2)$ is free and hence
properly discontinuous. Therefore the quotient space $L(p, q) := S^{\hspace{.3mm}3}/\mathbb{Z}_p$ is a 3-manifold
whose fundamental group is isomorphic to $\mathbb{Z}_p$. The 3-manifolds $L(p, q)$ are called the lens spaces. It
is a classical theorem of Reidemeister that $L(p, q^{\hspace{.3mm}\prime})$ is homeomorphic to $L(p, q)$ if and
only if $q^{\hspace{.3mm}\prime} \equiv \pm q^{\pm 1}$ (mod $p$).

If $T_1$, $T_2$ are two solid tori (i.e., each $T_j$ is homeomorphic to $\{(z, w) \in \mathbb{C}^{\hspace{1mm}2}
\, : \, |z|=1, \, |w|\leq 1\}$) such that (i) $T_1 \cap T_2 = \partial (T_1) = \partial (T_2) \cong S^1 \times S^1$,
(ii) $\pi_1(T_1\cap T_2, x) = \langle \alpha, \beta \, | \, \alpha\beta\alpha^{-1}\beta^{-1}\rangle$, (iii)
$\pi_1(T_1, x) = \langle \alpha\rangle$, (iv) $\pi_1(T_2, x) = \langle \alpha, \beta \, | \, \alpha\beta
\alpha^{-1}\beta^{-1}, \alpha^p\beta^q\rangle$ ($=\langle \alpha^m\beta^n\rangle$, where $m, n\in\mathbb{Z}$ such
that $mq-np=1$), for $x\in T_1\cap T_2$, then $T_1\cup T_2$ is homeomorphic to $L(p, q)$.

\subsection{Crystallizations} \label{crystal}

A CW-complex is said to be {\em regular} if all its closed cells are homeomorphic to closed balls.  Given a
CW-complex $X$, let ${\mathcal X}$ be the set of all closed cells of $X$ together with the empty set. Then
${\mathcal X}$ is a poset, where the partial ordering is the set inclusion. This poset ${\mathcal X}$ is said to
be the {\em face poset} of $X$. Clearly, if $X$ and $Y$ are two finite regular CW-complexes with isomorphic face
posets then $X$ and $Y$ are homeomorphic. A regular CW-complex $X$ is said to be {\em simplicial} if the boundary
of each cell in $X$ is isomorphic (as a poset) to the boundary of a simplex of same dimension. A {\em simplicial
cell complex} ${\mathcal X}$ of dimension $d$ is the face poset of a $d$-dimensional simplicial CW-complex $X$.
The topological space $X$ is called the {\em geometric carrier} of ${\mathcal X}$ and is also denoted by
$|{\mathcal X}|$. If a topological space $M$ is homeomorphic to $X$, then ${\mathcal X}$ is said to be a {\em
pseudotriangulation} of $M$ (see \cite{bj84, mu13} for more).

The 0-cells in a simplicial cell complex are said to be the vertices. Clearly, a $d$-dimensional simplicial cell
complex ${\mathcal X}$ has at least $d+1$ vertices. If  a $d$-dimensional simplicial cell complex ${\mathcal X}$
has exactly $d+1$ vertices then ${\mathcal X}$ is called {\em contracted}. If all the maximal cells of a
$d$-dimensional simplicial cell complex are $d$-cells then it is called {\em pure}.

Let ${\mathcal X}$ be a pure $d$-dimensional simplicial cell complex. Consider the graph $\Lambda({\mathcal X})$
whose vertices are the facets of ${\mathcal X}$ and edges are the ordered pairs $(\{\sigma_1, \sigma_2\},
\gamma)$, where $\sigma_1$, $\sigma_2$ are facets, $\gamma$ is a $(d-1)$-cell and is a common face of $\sigma_1$,
$\sigma_2$. The graph $\Lambda({\mathcal X})$ is said to be the {\em dual graph} of ${\mathcal X}$. Observe that
$\Lambda({\mathcal X})$ is in general a multigraph without loops. On the other hand, for $d\geq 1$, if $(\Gamma,
\gamma)$ is a $(d+1)$-colored connected graph with color set $C = \{1, \dots,  {d+1}\}$ then we define a
$d$-dimensional simplicial cell complex ${\mathcal K}(\Gamma)$ as follows. For each $v\in V(\Gamma)$ we take a
$d$-simplex $\sigma_v$ and label its vertices by ${1}, \dots, {d+1}$. If $u, v \in V(\Gamma)$ are joined by an
edge $e$ and $\gamma(e) =  {i}$, then we identify the $(d-1)$-faces of $\sigma_u$ and $\sigma_v$ opposite to the
vertices labelled by ${i}$, so that equally labelled vertices are identified together. Since there is no
identification within a $d$-simplex, this gives a simplicial CW-complex $W$ of dimension $d$. So, the face poset
(denoted by ${\mathcal K}(\Gamma)$) of $W$ is a pure $d$-dimensional simplicial cell complex. We say that
$(\Gamma, \gamma)$ {\em represents} the simplicial cell complex ${\mathcal K}(\Gamma)$. Clearly, the number of
$i$-labelled vertices of ${\mathcal K}(\Gamma)$ is equal to the number of components of $\Gamma_{C\setminus\{
{i}\}}$ for each $ {i}\in C$. Thus, the simplicial cell complex ${\mathcal K}(\Gamma)$ is contracted if and only
if $\Gamma$ is contracted  (cf. \cite{fgg86}).

A {\em crystallization} of a connected closed $d$-manifold $M$ is a $(d+1)$-colored contracted graph $(\Gamma,
\gamma)$ such that the simplicial cell complex ${\mathcal K}(\Gamma)$ is a pseudotriangulation of $M$. Thus, if
$(\Gamma, \gamma)$ is a crystallization of a $d$-manifold $M$ then the number of vertices in ${\mathcal
K}(\Gamma)$ is $d+1$. On the other hand, if $K$ is a contracted pseudotriangulation of $M$ then the dual
graph $\Lambda(K)$ gives a crystallization of $M$. Clearly, if $(\Gamma, \gamma)$ is a crystallization of a closed $d$-manifold $M$ then either $\Gamma$ has two vertices (in which case $M$ is $S^d$) or the number of edges between two vertices is at most $d-1$. From  \cite{cgp80}, we know the following.

\begin{prop}[Cavicchioli-Grasselli-Pezzana] \label{prop:ca-ga-pe80}
Let $(\Gamma,\gamma)$ be a crystallization of an $n$-manifold $M$. Then $M$ is orientable if and only if $\Gamma$
is bipartite.
\end{prop}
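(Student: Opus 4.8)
The plan is to reduce orientability of $M$ to a $2$-colouring condition on the vertices of $\Gamma$ via coherent orientations of the facets of $\mathcal{K}(\Gamma)$. Recall first that, since $|\mathcal{K}(\Gamma)| \cong M$ is a connected closed $n$-manifold, $M$ is orientable if and only if the $n$-simplices of the pseudotriangulation $\mathcal{K}(\Gamma)$ admit a \emph{coherent orientation}: an orientation of each facet such that whenever two facets share an $(n-1)$-face the two orientations induced on that face are opposite. Indeed, $\mathcal{K}(\Gamma)$ is the face poset of a regular CW complex, and by the construction of $\mathcal{K}(\Gamma)$ each $(n-1)$-cell lies in exactly two facets — the colour-$i$ edge at a vertex of $\Gamma$ is unique because $\Gamma$ is $(n+1)$-regular — so an assignment of signs $\pm 1$ to the facets defines a cellular $n$-chain which is a cycle precisely when it is coherent; a nonzero such cycle forces $H_n(M;\mathbb{Z}) \cong \mathbb{Z}$, and conversely the fundamental class of an orientable $M$ is a cellular cycle in which each facet occurs with coefficient $\pm 1$, hence a coherent orientation.

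Next I would exploit the canonical labelling. Each facet $\sigma_v$ of $\mathcal{K}(\Gamma)$ has its vertices labelled $1, \dots, n+1$, so an orientation of $\sigma_v$ is recorded by a sign $\varepsilon_v \in \{+1, -1\}$ according to whether the chosen orientation agrees with the ordering $[1, 2, \dots, n+1]$. If $u, v \in V(\Gamma)$ are joined by an edge of colour $i$, then $\sigma_u$ and $\sigma_v$ are identified along the $(n-1)$-faces opposite their vertices labelled $i$, matching equal labels, and by the standard boundary formula the orientation induced on this common face by $\sigma_w$ (with sign $\varepsilon_w$) is $\varepsilon_w(-1)^{i-1}[1, \dots, \widehat{i}, \dots, n+1]$ for $w \in \{u, v\}$. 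Hence the coherence condition along this face is $\varepsilon_u(-1)^{i-1} = -\varepsilon_v(-1)^{i-1}$, i.e.\ $\varepsilon_u = -\varepsilon_v$ — independent of the colour $i$, so multiple edges between the same pair of vertices impose exactly the same constraint.

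Finally, a coherent orientation of $\mathcal{K}(\Gamma)$ is thus the same datum as an assignment $v \mapsto \varepsilon_v \in \{+1, -1\}$ with $\varepsilon_u = -\varepsilon_v$ for every edge $uv$ of $\Gamma$; since $\Gamma$ is connected, such an assignment exists if and only if $\Gamma$ contains no odd cycle, i.e.\ if and only if $\Gamma$ is bipartite, with the two parts being $\varepsilon^{-1}(+1)$ and $\varepsilon^{-1}(-1)$. Combining this with the first paragraph yields: $M$ is orientable if and only if $\Gamma$ is bipartite.

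I expect the main obstacle to be the first paragraph: carefully justifying that orientability of the topological manifold $M$ is equivalent to coherent orientability of the facets of the \emph{simplicial cell complex} $\mathcal{K}(\Gamma)$, which is not a genuine simplicial complex. One must verify that the quotient construction gives a regular CW structure in which every $(n-1)$-cell meets exactly two facets, that there are no self-identifications within a facet (already noted in the excerpt), and that the top cellular chain group therefore computes $H_n(M;\mathbb{Z})$ faithfully. The remaining two paragraphs are a short sign computation with the boundary formula and an elementary fact about $2$-colourings of connected multigraphs.
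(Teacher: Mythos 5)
Your argument is correct: reducing orientability to a coherent orientation of the facets, observing via the boundary formula that the colour of the gluing edge cancels out so that coherence is exactly the condition $\varepsilon_u=-\varepsilon_v$ on every edge of $\Gamma$, and concluding by the bipartiteness criterion for $2$-colourings of a connected graph is the standard proof of this fact. The paper does not prove Proposition \ref{prop:ca-ga-pe80} at all --- it is quoted from \cite{cgp80} --- and your argument is essentially the one found in that reference, so there is nothing to compare beyond noting that your flagged ``main obstacle'' (each $(n-1)$-cell of $\mathcal{K}(\Gamma)$ lying in exactly two facets with incidence numbers $\pm 1$, so that the top cellular chain group detects $H_n(M;\mathbb{Z})$) is indeed the only point needing care and is handled correctly.
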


For $k\geq 2$, let $1, \dots, k$ be the colors of a $k$-colored graph $(\Gamma, \gamma)$. For $1\leq i\neq j\leq
k$, $\Gamma_{ij}$ denote the graph $\Gamma_{\{i, j\}}$ and $g_{ij}$ denote the number of connected components of
the graph $\Gamma_{ij}$. In \cite{ga79a}, Gagliardi proved the following.

\begin{prop}[Gagliardi] \label{prop:gagliardi79a}
Let $(\Gamma,\gamma)$ be a contracted $4$-colored graph with $m$ vertices. Then $(\Gamma,\gamma)$ is a
crystallization of a connected closed $3$-manifold if and only if
\begin{enumerate}[{\rm (i)}]
\item
$g_{ij}=g_{kl}$ for every permutation $ijkl$ of $1234$, and
\item $g_{12}+g_{13}+g_{14}=2+m/2$.
\end{enumerate}
\end{prop}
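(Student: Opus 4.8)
\medskip
\noindent\textbf{Proof strategy.} The plan is to recognise $|\mathcal{K}(\Gamma)|$ as a closed pseudomanifold whose only possible non-manifold points are the four vertices, to compute the four vertex links explicitly in terms of the numbers $g_{ij}$, and then to observe that the manifold condition is the system obtained from $\chi({\rm lk}(i)) = 2$, which is equivalent to (i)--(ii). Write $C = \{1,2,3,4\}$. Since $(\Gamma,\gamma)$ is a connected $4$-regular graph properly edge-coloured by $C$, each colour class is a perfect matching and $\mathcal{K}(\Gamma)$ is obtained by gluing $m$ tetrahedra along $2$-faces so that every $2$-cell lies in exactly two tetrahedra; hence $|\mathcal{K}(\Gamma)|$ is a connected compact $3$-dimensional space without boundary. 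Interior points of tetrahedra are manifold points; the link of a $2$-cell is $S^0$, so the relative interior of every $2$-cell consists of manifold points; and if $e$ is a $1$-cell of ``colour type'' $\{i,j\}$, then the tetrahedra around $e$ are indexed by a connected component of $\Gamma_{C\setminus\{i,j\}}$, which is a cycle because $\Gamma_{C\setminus\{i,j\}}$ is $2$-regular, so the link of $e$ is a single $1$-cycle $\cong S^1$ and the relative interior of $e$ consists of manifold points as well. Therefore $|\mathcal{K}(\Gamma)|$ is a closed $3$-manifold if and only if the link of each of the four vertices is homeomorphic to $S^2$.

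Next I would identify ${\rm lk}(i)$ with the $2$-dimensional simplicial cell complex represented by the $3$-coloured subgraph $\Gamma_{C\setminus\{i\}}$: cutting off a small triangle near the vertex coloured $i$ inside each tetrahedron $\sigma_v$ gives $m$ triangles, indexed by $V(\Gamma)$, glued along edges exactly according to the edges of $\Gamma$ coloured by $C\setminus\{i\}$, each such face lying in precisely two tetrahedra; so ${\rm lk}(i)$ is a closed surface, and it is connected because $\Gamma_{C\setminus\{i\}}$ is connected (as $(\Gamma,\gamma)$ is contracted). By the classification of closed surfaces, ${\rm lk}(i)\cong S^2$ if and only if $\chi({\rm lk}(i)) = 2$. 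I would then count the faces of ${\rm lk}(i)$; for $i=1$, the triangles give $f_2 = m$, the edges of $\Gamma$ of colours $2,3,4$ give $f_1 = 3m/2$, and the vertices of ${\rm lk}(1)$ lying on the $1$-cells of type $\{1,j\}$ are indexed by the components of $\Gamma_{C\setminus\{1,j\}}$, giving $f_0 = g_{34}+g_{24}+g_{23}$. Hence $\chi({\rm lk}(1)) = g_{23}+g_{24}+g_{34}-m/2$, and the analogous identity holds with $1$ replaced by any colour. Consequently $|\mathcal{K}(\Gamma)|$ is a connected closed $3$-manifold if and only if the four equations $g_{jk}+g_{jl}+g_{kl} = 2+m/2$ hold, one for each colour $i$ with $\{j,k,l\} = C\setminus\{i\}$.

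Finally I would solve this linear system. Subtracting the equations for the colours $1$ and $2$ gives $g_{23}+g_{24} = g_{13}+g_{14}$, and subtracting those for $3$ and $4$ gives $g_{14}+g_{24} = g_{13}+g_{23}$; adding and subtracting these yields $g_{13} = g_{24}$ and $g_{14} = g_{23}$, and then subtracting the equations for $1$ and $3$ gives $g_{12} = g_{34}$, which is exactly (i); substituting (i) into any one of the four equations gives (ii). Conversely, under (i) all four equations reduce to (ii). I expect the only delicate step to be the topological reduction carried out above --- that $\mathcal{K}(\Gamma)$ is automatically a closed pseudomanifold with singularities confined to the four vertices, and that ${\rm lk}(i)$ is the closed surface crystallized by $\Gamma_{C\setminus\{i\}}$; this is the part one could alternatively cite from standard crystallization theory (e.g.\ \cite{fgg86}), after which the face count and the linear algebra are routine.
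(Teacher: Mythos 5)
Your argument is correct. Note first that the paper does not prove this proposition at all: it is quoted as a known result of Gagliardi from \cite{ga79a}, so there is no in-paper proof to compare against. What you have written is essentially the classical proof of Gagliardi's criterion, and every step checks out: the reduction to the four vertex links (interiors of tetrahedra, $2$-cells and $1$-cells are automatically manifold points, the last because the tetrahedra around a $1$-cell are indexed by a component of a $2$-regular, hence cyclic, bicoloured subgraph); the identification of ${\rm lk}(v_i)$ with the closed connected surface represented by $\Gamma_{C\setminus\{i\}}$ (connectedness is exactly where contractedness enters); the count $\chi({\rm lk}(v_1))=g_{23}+g_{24}+g_{34}-m/2$; and the elementary linear algebra showing that the four equations $g_{jk}+g_{jl}+g_{kl}=2+m/2$ are jointly equivalent to (i) and (ii). The only thing worth flagging is that you (reasonably) read ``$4$-coloured'' as ``$4$-regular and properly $4$-edge-coloured''; this is the standard convention in crystallization theory and is genuinely needed, since without regularity $|\mathcal{K}(\Gamma)|$ has boundary and cannot be a closed manifold, so your implicit assumption is consistent with the intended statement. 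Your internal convention for the ``colour type'' of a $1$-cell is the opposite of the paper's notation $u_{ij}$ (which denotes the edge \emph{not} containing $v_i,v_j$), but you use it consistently, so the face count $f_0=g_{34}+g_{24}+g_{23}$ for ${\rm lk}(v_1)$ is right.
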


Let  $(\Gamma, \gamma)$ be a crystallization (with the color set $C$) of a connected closed $n$-manifold $M$. So,
$\Gamma$ is an $(n+1)$-regular graph. Choose two colors, say, $i$ and $j$ from $C$. Let $\{G_1, \dots, G_{s+1}\}$
be the set of all connected components of $\Gamma_{C\setminus \{i,j\}}$ and $\{H_1, \dots, H_{t+1}\}$ be the set
of all connected components of $\Gamma_{ij}$. Since $\Gamma$ is regular, each $H_p$ is an even cycle. Note
that, if $n=2$, then $\Gamma_{ij}$ is connected and hence $H_1= \Gamma_{ij}$. Take a set $\widetilde{S} =
\{x_1, \dots, x_s, x_{s+1}\}$ of $s+1$ elements. For $1\leq k\leq t+1$, consider the word $\tilde{r}_k$ in
$F(\widetilde{S})$ as follows. Choose a vertex $v_1$ in $H_k$. Let $H_k = v_1 e_{1}^i v_2 e_{2}^j v_3 e_{3}^i
v_{4} \cdots e_{2l-1}^i v_{2l}e_{2l}^jv_1$, where $ e_{p}^i$ and  $ e_{q}^j$ are edges with colors $i$ and $j$
respectively. Define
\begin{align} \label{tildar}
\tilde{r}_k := x_{k_2}^{+1} x_{k_3}^{-1}x_{k_4}^{+1}  \cdots
x_{k_{2l}}^{+1}x_{k_1}^{-1},
\end{align}
where $G_{k_h}$ is the component of $\Gamma_{C\setminus \{i,j\}}$ containing $v_h$.  For $1\leq k\leq t+1$, let
$r_k$ be the word obtained from $\tilde{r}_k$ by deleting $x_{s+1}^{\pm 1}$'s in $\tilde{r}_k$. So, $r_k$ is a
word in $F(S)$, where $S = \widetilde{S}\setminus \{x_{s+1}\}$. In \cite{ga79b}, Gagliardi proved the following.

\begin{prop}[Gagliardi] \label{prop:gagliardi79b}
For $n\geq 2$, let  $(\Gamma, \gamma)$ be a crystallization of a connected closed $n$-manifold $M$. For two
colors $i, j$, let $s$, $t$, $x_p$, $r_q$ be as above. If $\pi_1(M, x)$ is the fundamental group of $M$ at a
point $x$, then
$$
\pi_1(M, x) \cong \left\{ \begin{array}{lcl}
\langle {x_1, x_2,\dots, x_s} ~ | ~ {r_1} \rangle & \mbox{if}
& n=2,   \\
\langle {x_1, x_2, \dots, x_s} ~ | ~ {r_1, \dots, r_t} \rangle
& \mbox{if} & n\geq 3.
\end{array}\right.
$$
\end{prop}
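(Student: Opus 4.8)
The plan is to present $\pi_1(M)$ by cutting $M$ along a hypersurface that separates the two complementary subcomplexes of $K=\mathcal K(\Gamma)$ associated with the colour pair $\{i,j\}$, and then applying the Seifert--van Kampen theorem. Write $C$ for the colour set, and recall the dictionary between residues of $\Gamma$ and cells of $K$: for $B\subseteq C$ the connected components of $\Gamma_{C\setminus B}$ are in bijection with the cells of $K$ whose vertex set is $B$. Hence the components $G_1,\dots,G_{s+1}$ of $\Gamma_{C\setminus\{i,j\}}$ correspond to the $s+1$ edges $\tau_1,\dots,\tau_{s+1}$ of $K$ joining the two vertices labelled $i$ and $j$, while the components $H_1,\dots,H_{t+1}$ of $\Gamma_{ij}$ correspond to the $(n-2)$-cells of $K$ whose vertex set is $C\setminus\{i,j\}$; since $\Gamma$ is $(n+1)$-regular, each $H_k$ is an even cycle alternating in the colours $i$ and $j$. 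Let $\Lambda=K_{\{i,j\}}$ be the full $1$-subcomplex of $K$ on the vertices $i,j$ --- the graph with two vertices and the $s+1$ edges $\tau_1,\dots,\tau_{s+1}$ --- and let $\Lambda^{\perp}=K_{C\setminus\{i,j\}}$ be the $(n-2)$-subcomplex of $K$ on the remaining colours, whose top cells are $H_1,\dots,H_{t+1}$.

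First I would establish the decomposition $M=N(\Lambda)\cup N(\Lambda^{\perp})$ into closed regular neighbourhoods. Since $\Lambda$ is a full subcomplex of $K$, the complement of an open regular neighbourhood of $\Lambda$ collapses onto the complementary subcomplex $\Lambda^{\perp}$; hence $N(\Lambda)$ and $N(\Lambda^{\perp})$ cover $M$ and meet in a collar $\Sigma\times[0,1]$ of a connected separating hypersurface $\Sigma$ (here one uses that $K$ triangulates a manifold; for $n=3$ this is a Heegaard splitting of $M$). Being a regular neighbourhood of a graph, $N(\Lambda)$ has free fundamental group, and taking $\tau_{s+1}$ as a spanning tree identifies $\pi_1(N(\Lambda))$ with $F(x_1,\dots,x_s)$, where $x_j$ is the loop crossing $\tau_j$. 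By van Kampen, $\pi_1(M,x)$ is the quotient of $F(x_1,\dots,x_s)$ by the normal closure of the image in $\pi_1(N(\Lambda))$ of $\ker\!\big(\pi_1(\Sigma)\to\pi_1(N(\Lambda^{\perp}))\big)$, and that image is generated by the meridians of the $(n-2)$-cells $H_1,\dots,H_{t+1}$. Tracing the meridian of $H_k$: it runs once around the cycle $H_k=v_1e_1^iv_2e_2^jv_3\cdots e_{2l}^jv_1$, and at each vertex $v_h$ the corresponding facet of $K$ contains a unique $\{i,j\}$-edge, namely $\tau_{k_h}$ with $G_{k_h}$ the component of $\Gamma_{C\setminus\{i,j\}}$ containing $v_h$; recording the generator $x_{k_h}^{\pm 1}$, its sign fixed by whether the colour met at $v_h$ along the cycle is $i$ or $j$, and deleting the occurrences of $x_{s+1}^{\pm1}$ that come from the tree edge, produces precisely the word $r_k$ obtained from $\tilde r_k$ in \eqref{tildar}. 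Thus $\pi_1(M,x)\cong\langle x_1,\dots,x_s\mid r_1,\dots,r_{t+1}\rangle$.

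It remains to dispose of the last relator. If $n=2$, then $\Lambda^{\perp}$ is a single vertex, so there is a unique top cell and a unique relator: $\pi_1(M,x)\cong\langle x_1,\dots,x_s\mid r_1\rangle$, and $r_1$ cannot be omitted, since a closed surface group is not free. If $n\ge 3$, then $\Lambda^{\perp}$ is positive-dimensional, and the meridians $\mu_1,\dots,\mu_{t+1}$ of its top cells are dependent: going around the codimension-three faces of $\Lambda^{\perp}$ yields relations whose net effect is $\mu_{t+1}\in N(\{\mu_1,\dots,\mu_t\})$ (for $n=3$ this is the familiar fact that a handlebody of genus $s$ is filled by $s$, not $s+1$, meridian discs). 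Transported into $\pi_1(N(\Lambda))=F(x_1,\dots,x_s)$ this gives $r_{t+1}\in N(\{r_1,\dots,r_t\})$, so the Tietze move deleting $r_{t+1}$ is legitimate and $\pi_1(M,x)\cong\langle x_1,\dots,x_s\mid r_1,\dots,r_t\rangle$.

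The main obstacle is the geometric bookkeeping in the second paragraph: verifying that $M=N(\Lambda)\cup N(\Lambda^{\perp})$ with the overlap a bicollar of one connected separating hypersurface, and --- above all --- making the identification of the meridian of $H_k$ with the word $r_k$ completely precise, accounting for every sign and for the deletion of $x_{s+1}$, by following the even cycle $H_k$ through the dual graph one facet at a time. The split into $n=2$ and $n\ge 3$ is forced here: for surfaces $\Lambda^{\perp}$ is $0$-dimensional and its single meridian (the link of a vertex) gives the essential surface relation, whereas for $n\ge 3$ the complex $\Lambda^{\perp}$ is positive-dimensional and its top-cell meridians satisfy a relation, which is exactly what lets one relator --- and it may be taken to be $r_{t+1}$ --- be discarded; checking that this works uniformly for all $n\ge 3$ is the other point requiring care.
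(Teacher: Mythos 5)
The paper offers no proof of Proposition \ref{prop:gagliardi79b}: it is quoted as a known result of Gagliardi \cite{ga79b}, so there is nothing internal to compare your argument against. Judged on its own, your proof follows the standard route (complementary regular neighbourhoods of the two subcomplexes $\Lambda=K_{\{i,j\}}$ and $\Lambda^{\perp}=K_{C\setminus\{i,j\}}$, then Seifert--van Kampen) and is correct in substance. The residue--cell dictionary, the identification $\pi_1(N(\Lambda))\cong F(x_1,\dots,x_s)$ after contracting $\tau_{s+1}$, and the form of the van Kampen quotient are all right; the latter uses only that $\Lambda^{\perp}$ has codimension $2$, so that $\pi_1(\Sigma)\to\pi_1(N(\Lambda^{\perp}))$ is onto with kernel \emph{normally} generated by the meridians of the top cells. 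Note also that $\Sigma$ is connected: for $n\geq 3$ because $\Lambda$ has codimension $\geq 2$, and for $n=2$ because contractedness forces $\Lambda^{\perp}$ to be a single vertex, so $M\setminus \mathrm{int}\,N(\Lambda)$ is one disc.

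The one step you should not leave at a single clause is the redundancy of $r_{t+1}$ for $n\geq 3$. The correct unpacking of ``going around the codimension-three faces'' is: fix $c\in C\setminus\{i,j\}$ and let $F$ be the $(n-3)$-cell of $K$ to which the face of $H_{t+1}$ opposite the vertex $v_c$ is glued. A normal $2$-sphere of $F$ meets $\Lambda^{\perp}$ in finitely many points, exactly one for each top cell of $\Lambda^{\perp}$ whose $c$-opposite face is $F$ (each $H_m$, having simplicial boundary, has exactly one face with vertex set $C\setminus\{i,j,c\}$). The punctured $2$-sphere then yields, in $\pi_1(\Sigma)$, a relation expressing a product of conjugates of these meridians as $1$, in which $\mu_{t+1}$ occurs exactly once; solving for it shows that $\mu_{t+1}$ lies in the normal closure of meridians all distinct from it, hence of $\{\mu_1,\dots,\mu_t\}$. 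For $n=3$ the cells $F$ are the vertices of the spine graph $\Lambda^{\perp}$ and this is precisely your handlebody remark. With that paragraph supplied, and the sign bookkeeping for $r_k$ carried out as you indicate, the proof is complete.
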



\section{Proofs of Lemma \ref{lemma:psi(M)}, Theorem \ref{theorem:T1} and Corollary \ref{cor:h2>=6m}}

\noindent {\em Proof of Lemma} \ref{lemma:psi(M)}.
The result follows from the following lemma. \hfill $\Box$

\begin{lemma} \label{lemma:psi(G)}
{\rm (i)} $\psi(\mathbb{Z})  = \psi(\mathbb{Z}_2)= 8$,
{\rm (ii)} $\psi(\mathbb{Z}_3) = 12$, {\rm (iii)} $\psi(\mathbb{Z}_5) =
16$, {\rm (iv)}  $\psi(Q_8) = 18$ and {\rm (v)} $\psi(\mathbb{Z}^3) =
24$.
\end{lemma}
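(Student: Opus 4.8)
The plan is to compute, for each of the five groups $G$, the quantities $\mu(G)$, $\rho(G)$ and $\psi(G;\rho(G))$ directly from the definitions in Section 2, and then read off $\psi(G) = \max\{\psi(G;\rho(G)),\, 6\mu(G)+2\}$. For each $G$ I would split the work into an \emph{upper bound} part (exhibit an explicit presentation $\langle S \mid R\rangle \cong G$ with $\#(R)\le\#(S)\le q$, compute $\varphi(S,R)$ via the formula $\varphi(S,R)=\lambda(r_1)+\cdots+\lambda(r_t)+\lambda(r_{t+1})+2(s-t)$, and thereby bound $\psi(G;q)$ from above) and a \emph{lower bound} part (show no presentation with few generators and short relations can be isomorphic to $G$). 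For the lower bounds the key tool is that $\lambda(w)$ is an even integer with $\lambda(\emptyset)=2$ and $\lambda(w)\ge 2$ always, that $\lambda(w)=2$ forces $w$ to be (conjugate to) a single generator $x_i^{\pm1}$, and that a relation of small weight is a short word, so the abelianization and order of the resulting group are tightly constrained. I would also use the elementary facts $\mu(G)\ge m(G)$ and, for a nontrivial $G$, $\psi(G)\ge 8$ (already asserted in the excerpt), which immediately gives the lower bound $8$ in case (i) for $\mathbb{Z}_2$ and in fact shows $\psi(\mathbb{Z})=\max\{\psi(\mathbb{Z};\rho(\mathbb{Z})),\,6\mu(\mathbb{Z})+2\}=\max\{4,8\}=8$ once I check $\mu(\mathbb{Z})=1$ and $\psi(\mathbb{Z};1)=\varphi(\{x\},\emptyset)=\lambda(\emptyset)+\lambda(\overline{\emptyset}\text{-min})$; here $\overline{\emptyset}=\emptyset$, so $r_{t+1}$ with $t=0$ is the minimum-weight element of $\overline{\{\,\}}$... one must be careful: with $R=\emptyset$, $t=0$, so $r_1$ through $r_t$ is empty and $r_{t+1}=r_1\in\overline{\emptyset}=\emptyset$; I would interpret $\lambda$ of the empty product as $2$, giving $\varphi(\{x\},\emptyset)=2+2(1-0)=4$. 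Then $\rho(\mathbb{Z})=1$ since $4\le 6\cdot2=12$, and $\psi(\mathbb{Z})=\max\{4,8\}=8$.

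For the finite cyclic cases the natural presentation is $\langle x \mid x^n\rangle$ with $\lambda(x^n)=2n$ (the word $x^n$ has all exponents $+1$, so $\varepsilon=0$ and $\lambda=2n$), and with $t=1=s$ we need $r_2\in\overline{\{x^n\}}$ of minimum weight; since $x^n\in\overline{\{x^n\}}$ and any element of $\overline{\{x^n\}}$ normally generates the same subgroup, the minimum weight is $\lambda(x^n)=2n$ (one checks conjugates $wx^{\pm n}w^{-1}$ have weight $2n$ and nothing shorter lies in $\overline{\{x^n\}}$), so $\varphi=2n+2n+0=4n$, giving $\psi(\mathbb{Z}_n;1)\le 4n$. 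Thus $\psi(\mathbb{Z}_2;1)\le 8$, $\psi(\mathbb{Z}_3;1)\le 12$, $\psi(\mathbb{Z}_5;1)\le 16$. One then checks $\rho=1$ in each case: $\psi(\mathbb{Z}_n;1)\le 4n \le 6(1+1)=12$ holds for $n\le 3$, so $\rho(\mathbb{Z}_2)=\rho(\mathbb{Z}_3)=1$ and $\psi=\max\{4n,8\}$, i.e.\ $8$ and $12$. For $\mathbb{Z}_5$, $4\cdot5=20>12$, so I must check whether using $q=2$ generators helps: with two generators one could try $\langle x,y\mid y, x^5\rangle$ or $\langle x,y\mid xy^{-1},\dots\rangle$, but any presentation of $\mathbb{Z}_5$ still forces a relation forcing an element of order $5$, and the weight accounting with the extra $2(s-t)$ term and the extra $\lambda(r_{t+1})$ term does not beat $20$; so $\psi(\mathbb{Z}_5;q)$ stays $\ge 16$... actually I claim $\psi(\mathbb{Z}_5;q)=4\cdot5 = 20$ is wrong as the final value — re-examining, $\psi(\mathbb{Z}_5) = 16$ must come out, so $\rho(\mathbb{Z}_5)$ is some $q\ge 2$ where a cleverer presentation gives $\varphi = 16 \le 6(q+1)$; the right presentation is likely $\langle x,y \mid x^{-1}y^2,\ y^{-1}x^3\rangle$ or similar with $\lambda$-values $4+6=10$ plus $\lambda(r_3)$ plus $2(s-t)=0$ — I would search small two-relator two-generator presentations of $\mathbb{Z}_5$ minimizing $\varphi$, find one with $\varphi=16$, verify $16\le 6\cdot3=18$ so $\rho(\mathbb{Z}_5)=2$, and then prove $16$ is optimal by a case analysis on relation weights. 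The analogous but more involved searches handle $Q_8$ (expected optimal presentation $\langle x,y\mid x^2y^{-2},\,y^2(xy)^{-2}\rangle$ or $\langle x,y \mid xyx^{-1}y, x^2y^{-2}\rangle$ type, with $\varphi=18$) and $\mathbb{Z}^3$ (presentation $\langle x,y,z \mid [x,y],[y,z],[z,x]\rangle$ with $\lambda([a,b])=\lambda(aba^{-1}b^{-1})=8$ each, $s=t=3$, plus $\lambda(r_4)$; note $r_4\in\overline{R}$, and $[x,y][y,z][z,x]$ or a product has weight to be computed, giving $\varphi = 8+8+8+\lambda(r_4)+0 = 24+\cdots$ — so I must check $r_4$ can have weight $0$, impossible since $r_4\ne$ a generator; the value $24$ suggests the three commutators actually reduce, or that $\psi(\mathbb{Z}^3)=6\mu(\mathbb{Z}^3)+2=6\cdot 3+2=20$ is beaten by $\psi(\mathbb{Z}^3;\rho)=24$, so I would aim for $\varphi=24$ with a presentation where $r_4$ is dependent on one $r_i$ and contributes minimally).

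The main obstacle, and where most of the real work lies, is the \emph{lower bounds}: proving that for each $G$ no presentation does better than the exhibited one. The strategy for this is: suppose $\langle S\mid R\rangle\cong G$ with $\#(R)\le\#(S)=s$, $\#(R)=t$; the contribution $2(s-t)+\lambda(r_{t+1})$ is always $\ge 2(s-t)$ (in fact $\ge 2(s-t)+2$ when $R\ne\emptyset$ since $r_{t+1}\in\overline R$ is a nonempty word, hence $\lambda(r_{t+1})\ge 2$, with equality only if $r_{t+1}$ is conjugate to a generator, which is excluded when $G$ is nontrivial because then that generator would be trivial and $G$ would need fewer generators — pushing into a minimality argument). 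Combined with $\sum\lambda(r_i)\ge$ (twice total length of a shortest relator system), and the fact that to kill a generator or to create an element of order $n$ one needs relators of controlled minimum length, I would get matching lower bounds. For $\mathbb{Z}^3$ the lower bound additionally uses that three independent relators are genuinely needed (the second homology / deficiency of $\mathbb{Z}^3$ is $0$, forcing $t=s=3$) and each commutator-type relator has weight exactly $8$. I expect the $Q_8$ and $\mathbb{Z}^3$ lower bounds to be the most delicate, requiring a finite but nontrivial enumeration of low-weight relator systems, and this is the step I would budget the most space for in the full proof.
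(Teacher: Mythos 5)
Your overall architecture matches the paper's proof exactly: compute $\mu$, exhibit an explicit low-weight presentation to bound $\psi(G;q)$ from above, determine $\rho$, and then prove matching lower bounds by enumerating all relators of small weight. However, as written the proposal has two genuine gaps.

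First, you miscompute $\lambda$ on commutators, and this derails case (v). With the weight function as it is actually used in the paper ($\lambda(w)$ is the number of letters of the padded word $\tilde w$ obtained by inserting the dummy generator between cyclically consecutive letters of equal sign), one gets $\lambda(x_ix_jx_i^{-1}x_j^{-1})=4+2=6$, not $8$, and the sign-alternating word $x_1x_2^{-1}x_3x_1^{-1}x_2x_3^{-1}\in\overline{R_0}$ also has weight $6$, whence $\varphi(S_0,R_0)=6+6+6+6=24$ on the nose. Your value $8$ per commutator forces $\varphi\geq 26$ and leads you to speculate that ``the three commutators actually reduce'' or that some other presentation must be found; you never resolve this, so the $\mathbb{Z}^3$ case ends without a conclusion. (The same recomputation shows your candidate $\mathbb{Z}_5$ presentation $\langle x,y\mid y^2x^{-1},x^3y^{-1}\rangle$ does give $4+6+6=16$, and the paper uses the essentially identical $\{x_1^2x_2^{-1},x_2^3x_1^{-1}\}$ with $r_3=x_1x_2^2$ of weight $6$.)

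Second, the lower bounds for (iii), (iv) and (v) --- which are the actual content of the lemma --- are deferred entirely (``I would search\dots'', ``I expect\dots requiring a finite but nontrivial enumeration''). The paper carries these out: for $\mathbb{Z}_5$ it first rules out $\lambda(r_i)=2$ (note that weight $2$ allows not only $x_i^{\pm1}$ but also $x_i^{\varepsilon}x_j^{-\varepsilon}$, a case your sketch omits; either way the group becomes one-generated and the remaining relators are forced to weight $\geq 10$, giving $\varphi\geq 22$), then lists a complete set $A$ of weight-$4$ words up to dependence and checks that no presentation of $\mathbb{Z}_5$ has $R\subseteq A\cup A^{-1}$, so at most one of $r_1,r_2,r_3$ has weight $4$ and $\varphi\geq 4+6+6=16$. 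For $Q_8$ the analogous list $B$ of weight-$4$-and-$6$ words pins $\{r_1^{\pm1},r_2^{\pm1}\}$ down to three possibilities, all with every element of $R\cup\overline R$ of weight $\geq 6$. For $\mathbb{Z}^3$ one shows $N(R_0)$ contains no word of weight $4$ (every weight-$4$ word is dependent on one from an explicit list $C$, none of which lies in $N(R_0)$), so all of $r_1,\dots,r_4$ have weight $\geq 6$ and $\varphi\geq 24$. Without these enumerations --- or at least a precise argument replacing them --- the equalities in (iii)--(v) are not established; only the upper bounds are. Cases (i) and (ii) are essentially complete in your write-up and agree with the paper.
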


\begin{proof}
Any presentations of $\mathbb{Z}$ must have at least one generator and  $\langle x \rangle$ is a presentation of
$\mathbb{Z}$. So, $\mu(\mathbb{Z})=1$. If $\langle S | R\rangle \cong \mathbb{Z}$ with $\#(S)
=1$, then $R =\emptyset$ and hence, by the definition (see \eqref{varphi(S,R)}), $\varphi(S,R)= \lambda(\emptyset) + 2(1-0) = 2+2=4 <
12 = 6(\mu(\mathbb{Z})+1)$. Therefore, $\psi(\mathbb{Z};q) \leq 4$ for all $q\geq 1$. Thus, $\psi(\mathbb{Z}) =
\max\{\psi(\mathbb{Z}, \rho(\mathbb{Z})), 6\mu(\mathbb{Z}) +2\} = \max\{\psi(\mathbb{Z}, \rho(\mathbb{Z})), 8\}
=8$.

Let $p \geq 2$ be an integer. Since any presentations of $\mathbb{Z}_{p}$ must have at least one generator and $\langle x \, | \, x^{p} \rangle$ is a presentation of $\mathbb{Z}_{p}$, it follows that $\mu(\mathbb{Z}_{p})
=1$. Clearly, if $\langle S=\{x\} \,| \, R=\{r_1\} \rangle$ is a presentation of $\mathbb{Z}_p$, then $r_1 = x^{\pm
p}$. Let $r_2 \in \overline{R}$ be of minimum weight. Since $\langle x \, | r_2\rangle$ is also a presentation
of $\mathbb{Z}_{p}$, $r_2 = x^{\pm p}$. Therefore, by (\ref{varphi(S,R)}),
\begin{align} \label{eq:4p}
\varphi(S,R) = \lambda (r_1) +\lambda (r_2) = (2p - \varepsilon(r_1)) + (2p -\varepsilon(r_2)) = 4p.
\end{align}

First assume that $p\leq 3$. Since, $\langle S \, | \, R\rangle \in {\mathcal P}_1(\mathbb{Z}_p)$ implies (up to
renaming) $(S, R) = (\{x\}, \{x^p\})$ or $(\{x\}, \{x^{-p}\})$, it follows that $\psi(\mathbb{Z}_{p}; 1)=
\varphi(\{x\}, \{x^{\pm p}\}) = 4p \leq 12 = 6(\mu(\mathbb{Z}_{p}) +1)$. This implies that $\rho(\mathbb{Z}_{p})
= \mu(\mathbb{Z}_{p})=1$. Thus, $ \psi(\mathbb{Z}_{p};\rho(\mathbb{Z}_{p})) =4p \geq 8 =6\mu(\mathbb{Z}_{p})+2$.
Therefore, $\psi(\mathbb{Z}_{p})=4p$. This proves parts (i) and (ii).

\smallskip

Now, assume $p=5$. By the similar arguments as for $p \leq 3$, $\langle S \, | \, R\rangle \in {\mathcal
P}_1(\mathbb{Z}_5)$ implies $\varphi(S, R)=4p=20$. Therefore, $\psi(\mathbb{Z}_{5};1)=20 > 12 =
6(\mu(\mathbb{Z}_{5}) +1)$ and hence $\rho(\mathbb{Z}_{5})>\mu(\mathbb{Z}_{5})=1$. If we take $S =\{x_1, x_2\}$
and $R=\{r_1= x_1^2 x_2^{-1}, r_2= x_2^3x_1^{-1}\}$ then $\varphi(S,R)\leq 16$ (since $r_3 = x_1x_2^2\in
\overline{R}$ is of weight 6) and $\langle S \, | \, R \rangle \in {\mathcal P}_2(\mathbb{Z}_5) \setminus
{\mathcal P}_1(\mathbb{Z}_5)$. Thus, $\psi(\mathbb{Z}_{5};2) \leq 16<18 = 6(2+1)$. Therefore,
$\rho(\mathbb{Z}_{5})= 2$ and hence $\psi(\mathbb{Z}_{5})\leq 16$.

Now, let $\langle S \, | \, R \rangle \in {\mathcal P}_2(\mathbb{Z}_5) \setminus {\mathcal P}_1(\mathbb{Z}_5)$
with $\varphi(S, R)\leq 16$. Since there is no presentation $\langle S \, | \, R\rangle$ of $\mathbb{Z}_5$ with
$(\#(S), \#(R)) =(2,1)$, it follows that $\#(R)=\#(S)=2$. Let $S = \{ x_1, x_2 \}$ and $R=\{r_1,  r_2\}$. If
$\lambda(r_1) =2$, then $r_1$ must be of the form $x_i^{\pm 1}$ or $x_i^{\varepsilon}x_j^{-\varepsilon}$ for some
$j\neq i\in \{1, 2\}$ and $\varepsilon = \pm 1$. Since $\langle S \, | \, R \rangle \cong \mathbb{Z}_{5} $, it
follows that $r_2 \equiv x_j^{\pm 5}$ (mod $\{r_1\}$). This implies that $\lambda(r_2)\geq \lambda(x_j^{\pm 5})=
10$. Let $r_3 \in \overline{R}$ be of minimum weight. Then $\langle x_1, x_2 \, | \, r_1, r_3\rangle$ is also a
presentation of $ \mathbb{Z}_{5}$ and hence (by the same arguments) $\lambda(r_3)\geq 10$. Thus,  $\varphi(S, R)
= \lambda(r_1) + \lambda(r_2) + \lambda(r_3) \geq 2+10+10=22$, a contradiction. So, $\lambda(r_i) \geq 4$ for
$1\leq i\leq 2$. Let $A=\{x_1x_2, x_1^2, x_2^2, x_1^2x_2^{-1}, x_2^2x_1^{-1}, x_1x_2^{-1}x_1x_2^{-1}\}$ and let
$A^{-1}=\{w^{-1} \, : \, w\in A\}$. Then $A$ is a set of pairwise independent elements of weight $4$ in $F(S)$
and $w\in F(S)$ is an element of weight 4 imply that $w$ is dependent with an element of $A$. Note that
$\mathbb{Z}_{5}$ has no presentation $\langle S \, | \,  R\rangle \in {\mathcal P}_2(\mathbb{Z}_5) \setminus
{\mathcal P}_1(\mathbb{Z}_5)$ with $R\subseteq A \cup A^{-1}$. So, at most one of $r_1$, $r_2$, $r_3$ has weight
4 and the weights of other two are at least 6. Therefore, $\varphi(S,R) \geq 16$. This implies that
$\psi(\mathbb{Z}_{5})= 16$. This proves part (iii).

\smallskip

Clearly, $\mu(Q_8)= 2$. If we take $S=\{x_1, x_2\}$ and $R=\{x_2x_1x_2x_1^{-1}, x_1x_2x_1x_2^{-1} \}$ then
$\langle S \, | \, R\rangle\in \mathcal{P}_2(Q_8)$ and $\varphi(S, R) \leq 18$ (since $x_2^2x_1^{-2} \in
\overline{R}$ is of weight 6). Thus $\psi(Q_8; 2) \leq 18 = 6(2+1)$. Therefore, $\rho(Q_8) =2$ and hence
$\psi(Q_8) \leq 18$.

Now, let $\varphi(S, R) \leq 18$, where $S=\{x_1, x_2\}$ and $\langle S \, | \, R\rangle\in \mathcal{P}_2(Q_8)$.
Note that $B=\{x_1x_2, x_1^2, x_2^2, x_1^2x_2^{-1}, x_2^2x_1^{-1}$, $x_1x_2^{-1}x_1x_2^{-1}, x_2^2x_1$,
$x_1^3x_2^{-1}$, $x_2^2x_1^{-1}x_2x_1^{-1}$, $x_1^2x_2$, $x_2^3x_1^{-1}$, $x_1^3,x_2^3$,
$x_1^2x_2^{-1}x_1x_2^{-1}$, $x_1x_2x_1x_2^{-1}$, $x_2x_1x_2x_1^{-1},x_1x_2^{-1}x_1x_2^{-1}x_1x_2^{-1}$,
$x_1x_2x_1^{-1}x_2^{-1}$, $x_2^2x_1^{-2}\}$ is a set of pairwise independent elements of weight $4$ or $6$ in
$F(S)$. It is not difficult to see that $w\in F(S)$ and $4 \leq \lambda(w) \leq 6 $ imply $w$ is dependent with
an element of $B$. Let $B^{-1}=\{w^{-1}:w\in B\}$. Then $R \subseteq B\cup B^{-1}$. Clearly, the only possible
choices of $\{r_1^{\pm 1}, r_2^{\pm 1}\}$ are $ \{x_2^2x_1^{-2}, x_1x_2x_1x_2^{-1} \}$, $ \{x_2^2x_1^{-2},
x_2x_1x_2x_1^{-1} \}$ and $\{x_2x_1x_2x_1^{-1}$, $x_1x_2x_1x_2^{-1} \}$. Then $\lambda(r) \geq 6$ for $r\in R\cup
\overline{R}$. Thus, $\varphi(S,R) \geq 18$. Therefore, $\psi(Q_8)=18$. This proves parts (iv).

\smallskip

Clearly, $\mu(\mathbb{Z}^3)=3$. If $S_0=\{x_1, x_2, x_3\}$ and $R_0=\{x_ix_jx_i^{-1}x_j^{-1} \, : \, 1\leq i<
j\leq 3\}$ then $\langle S_0 \, | \, R_0\rangle\in \mathcal{P}_3(\mathbb{Z}^3)$ and $\varphi(S_0, R_0) \leq 24$
(since $x_1x_2^{-1}x_3x_1^{-1}x_2x_3^{-1}\in \overline{R}_0$ is of weight 6). Thus $\psi(\mathbb{Z}^3; 3) \leq 24
= 6(3+1)$. Therefore, $\rho(\mathbb{Z}^3) =3$ and hence $\psi(\mathbb{Z}^3) \leq 24$.

\medskip

\noindent {\em Claim.} If $w\in N(R_0)$ is not the identity then $\lambda(w) \geq 6$.

\smallskip

If $w\in N(R_0)$ is not the identity then clearly $\lambda(w) \neq 2$. Observe that, if $w \in F(S_0)$ with
$\lambda(w)=4$, then $w$ is dependent with an element of the set $C= \{x_i^2x_j^{-1}, x_ix_j^{-1}x_i x_j^{-1},
x_i^2,  x_ix_j$, $x_ix_j^{-1}x_i x_k^{-1} \, : \, ijk$ is a permutation of $123\}$. Since none of the element in
$C$ is in $N(R_0)$, it follows that $N(R_0)$ has no element of weight 4. This proves the claim.

Now, let $\varphi(S, R) \leq 24$, where $S=\{x_1, x_2, x_3\}$ and $\langle S \, | \, R\rangle\in
\mathcal{P}_3(\mathbb{Z}^3)$. Then $N(R) = N(R_0)$ and hence, by the claim, weight of each element of $R$ is at
least 6. This implies $\varphi(S, R) \geq 24$ and hence $\varphi(S, R) = 24$. Therefore, $\psi(\mathbb{Z}^3)=24$.
This completes the proof.
\end{proof}

\noindent {\em Proof of Theorem} \ref{theorem:T1}. Let $G = \pi(M, x)$ for some $x\in M$. To prove the theorem,
it is sufficient to show that any crystallization of $M$ needs at least $\psi(M) = \psi(G)$ vertices.

Let $(\Gamma,\gamma)$ be a crystallization of $M$ with $m$ vertices and let $\{1,2,3,4\}$ be the color set. Then,
by Proposition \ref{prop:gagliardi79b}, we know that $G$ has a presentation with $g_{ij}-1$ generators and $\leq
g_{ij}-1$ relations. Therefore, by the definition of $\mu(G)$ (in (\ref{mu(G)})), $\mu(G) \leq g_{ij}-1$. Then,
by part (ii) of Proposition \ref{prop:gagliardi79a},
\begin{align} \label{eq:m}
m = & \, 2(g_{12}+g_{13}+g_{14}) -4 \geq 6(\mu(G) +1) -4 = 6\mu(G) +2.
\end{align}

From the definition of $\rho(G)$ (in (\ref{rho(G)})), $6(\rho(G) +1) \geq \psi(G; \rho(G))$. Therefore, $m >
6(\rho(G)+1)$ implies $m > \psi(G; \rho(G))$. Thus, the result follows from this and Eq. \eqref{eq:m}.

Now, assume that $m \leq 6(\rho(G)+1)$. Then, by part (ii) of Proposition \ref{prop:gagliardi79a}, $g_{12}+
g_{13}+g_{14} \leq 2 + 3(\rho(G)+1)$. This implies, $g_{1j} \leq \rho(G)+1$ for some $j \in \{2, 3, 4\}$. Assume,
without loss, that $g_{12}\leq \rho(G)+1$.

As in Subsection \ref{crystal}, let $G_1, \dots, G_{q+1}$ be the components of $\Gamma_{12}$ and $H_1, \dots,
H_{q+1}$ be the components of $\Gamma_{34}$, where $q +1 =g_{34} = g_{12}  \leq \rho(G)+1$. By Proposition
\ref{prop:gagliardi79b}, $G$ has a presentation of the form $\langle  x_1, x_2,\dots, x_q \, |\, r_1, r_2,\dots,
r_q \rangle$, where $x_k$ corresponds to $G_k$ and $r_k$ corresponds to $H_k$ as in Subsection \ref{crystal}. Let
$S= \{x_1, x_2,\dots, x_q\}$ and $R=\{r_1, \dots, r_q\}$.

For $1\leq i\leq q$, let $r_i = x_{i_1}^{\varepsilon_1} \cdots x_{i_m}^{\varepsilon_m}$, where $x_{i_1}, \dots,
x_{i_m} \in \{x_1, \dots, x_q\}$ and $\varepsilon_j = \pm 1$ for $1\leq j\leq m$, $(x_{i_{j+1}},
\varepsilon_{j+1}) \neq (x_{i_j}, -\varepsilon_j)$ for $1\leq j \leq m-1$ and $(x_{i_m}, \varepsilon_m) \neq
(x_{i_1}, -\varepsilon_1)$.

\medskip

\noindent {\em Claim.} For $1\leq i\leq q$, the length of the cycle $H_i$ is at least $\lambda(r_i)$.

\smallskip

Consider the word $\tilde{r}_i$ (in $F(\{x_1, \dots, x_q, x_{q+1}\}$) which is obtained from $r_i$ by the
following rules: if $\varepsilon_j=\varepsilon_{j+1}$ for $1\leq j \leq m-1$, then replace
$x_{i_j}^{\varepsilon_j}$ by $x_{i_j}^{\varepsilon_j} x_{q+1}^{-\varepsilon_j}$ in $r_i$ and if $\varepsilon_m
=\varepsilon_{1}$, then replace $x_{i_m}^{\varepsilon_m}$ by $x_{i_m}^{\varepsilon_m} x_{q+1}^{-\varepsilon_m}$
in $r_i$. Observe that $\tilde{r}_i$ is non empty (since $r_i$ is non empty) and the number of letters in
$\tilde{r}_i$ is same as $\lambda(r_i)$ (see \eqref{tildar} and \eqref{l(w)}). The claim follows from this.

Now, we want to find a relation $r_{q+1}$ such that any $q$ of the relations from the set $\{r_1, r_2, \dots,
r_q$, $r_{q+1}\}$ together with the generators $x_1, x_2, \dots, x_q$ give a presentation of $G$. This implies,
$r_{q+1} \in \overline{R}$. Choose $r_{q+1}\in  \overline{R}$, such that $\lambda(r_{q+1})$ is least possible.
The length of the cycle $H_{q+1}$ (in $\Gamma_{34}$) corresponding to $r_{q+1}$ is at least  $\lambda(r_{q+1})$.
So, $m=$ the number of vertices required to yield the presentation $\langle S \, |\, R \rangle$  $ \geq
\lambda(r_1) + \lambda(r_2)+\cdots + \lambda(r_{q+1})$. So, $m \geq  \lambda (r_1) + \lambda (r_2)+ \cdots
+\lambda (r_{q+1}) = \varphi(S,R) \geq \psi(G;\rho(G))$. Thus, $m \geq \max \{\psi(G; \rho(G)), 6a(G)+2\} =
\psi(G)$. This proves the theorem. \hfill $\Box$

\bigskip

\noindent {\em Proof of Corollary} \ref{cor:h2>=6m}. Let $f_i$ be the number of $i$-cells in $X$. So, $f_0=4$.
Therefore, $g_2(X) = f_1-16+10 = f_1-6=f_1-12+6=h_2(X)$. Since $|X|$ is a closed 3-manifold, we have
$0=f_0-f_1+f_2-f_3 = 4-f_1+2f_3-f_3$. Thus, $f_1 = f_3+4$ and hence $g_2(X) =h_2(X) = f_3-2$. Therefore, by
Theorem \ref{theorem:T1}, $g_2(X) =h_2(X) =f_3-2 \geq \Psi(M) -2\geq \psi(M)-2$.

From the definition of $\psi(G)$, $\psi(G) \geq 6\mu(G) +2 \geq 6m(G)+2$. Thus, $\psi(M) =  \psi(\pi(M, \ast))
\geq 6m(\pi(M, \ast))+2$. Since any presentation of $\pi(M,\ast)$ has at least $\beta_1(M; \mathbb{F})$
generators, it follows that $m(M) =  m(\pi(M, \ast)) \geq \beta_1(M; \mathbb{F})$. The corollary now follows.
\hfill $\Box$

\begin{remark} \label{remark:R1}
{\rm If a crystallization $(\Gamma, \gamma)$ yields a presentation $\langle S \, | \, R\rangle$ then, from the
proof of Theorem \ref{theorem:T1}, we get $\varphi(S, R) \leq$ the number of vertices of $\Gamma$. }
\end{remark}

\begin{remark} \label{remark:nonunique}
{\rm  We found that $\rho(\mathbb{Z}^3) =3$ and $\varphi(S, R) =24$, where $\langle S \, | \, R\rangle \in
\mathcal{P}_3(\mathbb{Z}^3)$. On the other hand, if $S^{\,\prime}= \{x_1, \dots, x_5\}$ and $R^{\,\prime}=
\{x_1x_4^{-1}x_5x_3^{-1}, x_1x_5x_2^{-1}, x_3x_4x_2^{-1}, x_1x_3^{-1}x_5x_4^{-1}$, $x_5x_1x_2^{-1}\}$ then
$\langle S^{\,\prime} \, | \, R^{\,\prime} \rangle\in \mathcal{P}_5(\mathbb{Z}^3) \setminus
\mathcal{P}_4(\mathbb{Z}^3)$ with $\varphi(S^{\prime}, R^{\prime}) =24$. So, the minimum weight presentation of
$\mathbb{Z}^3$ is not unique. This is true for most of the groups. }
\end{remark}

\section{Uniqueness of some crystallizations}

Here, we are interested on crystallizations of 3-manifolds $M$ with $\psi(M)$ vertices. For seven
3-manifolds, we show that there exists a unique such crystallization for each of them.

Throughout this section and behind, $1, 2, 3, 4$ are the colors of a 4-colored graph $(\Gamma, \gamma)$ and
$g_{ij}$ is the number of components of $\Gamma_{ij}=\Gamma_{\{i, j\}}$ for $i\neq j$.

Let ${\mathcal X}$ be the pseudotriangulation of a connected closed 3-manifold $M$ determined by a
crystallization $(\Gamma, \gamma)$. So, $(\Gamma, \gamma)$ is contracted, i.e., $\Gamma_{\{i, j,k\}}$ 
is connected for $i, j, k$ distinct. 
For $1\leq i\leq 4$, we denote the vertex of ${\mathcal X}$ corresponding to
the color $i$ by $v_i$. We identify a vertex $u$ of $\Gamma$ with the corresponding facet $\sigma_u$ of
${\mathcal X}$. For a facet $u$ ($\equiv \sigma_u$) of ${\mathcal X}$, the $2$-face of $u$ not containing the
vertex $v_i$ will be denoted by $u_i$. Similarly, the edge of $u$ not containing the vertices $v_i, v_j$ will be
denoted by $u_{ij}$. Clearly, if $C_{2k}(u^1, u^2,  \dots, u^{2k})$ is a $2k$-cycle in $\Gamma$ with colors $i$
and $j$ alternately, then $u^1_{ij} = u^2_{ij} = \cdots = u^{2k}_{ij}$ in ${\mathcal X}$.

\begin{lemma}\label{lemma:no2cycle}
Let $\Gamma$ be a crystallization of a connected closed $3$-manifold $M$ with $m$ vertices. If  $\Gamma$ has a
$2$-cycle, then  either $M$ has a crystallization with $m-2$ vertices or $\pi_1(M, x)$ $($for $x\in M)$ is
isomorphic to the free product $\mathbb{Z} \ast H$ for some group $H$.
\end{lemma}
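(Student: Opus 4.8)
The statement concerns a 2-cycle in $\Gamma$, i.e.\ two vertices $x, y$ joined by two parallel edges; say these edges carry colors $i$ and $j$. Then $\Gamma[\{x,y\}]$ is a $3$-dimensional dipole of type $2$ (with respect to the pair of colors $\{i,j\}$) \emph{unless} the two vertices $x$ and $y$ lie in the same component of $\Gamma_B$, where $B = \{1,2,3,4\}\setminus\{i,j\}$ is the complementary $2$-element color set. Here $\Gamma_B$ is a disjoint union of cycles alternately colored by the two colors in $B$. So the dichotomy in the lemma should mirror exactly this dichotomy: if $x$ and $y$ lie in \emph{different} components of $\Gamma_B$, the dipole is genuine (non-degenerate, since $1 < 2 < 3$), and cancelling it produces a crystallization of the same manifold $M$ with two fewer vertices; this is the first alternative. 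The bulk of the work is the second alternative: when $x$ and $y$ lie in the \emph{same} component of $\Gamma_B$, one must show $\pi_1(M,x)\cong \mathbb{Z}\ast H$.

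First I would recall, from the dipole machinery in Section~2, that cancelling a non-degenerate dipole of type $2$ in a $(3{+}1)$-colored graph yields again a crystallization of the same closed $3$-manifold, with exactly two fewer vertices (the dipole cancellation replaces $X\sqcup Y$ by $X\#_{xy}Y$ in $\Gamma_B$, and removes $x,y$). This handles the first case and is essentially a citation to \cite{fg82}. So assume from now on that $x,y$ sit in the same component $Z$ of $\Gamma_B$. The plan is to compute $\pi_1(M,x)$ via Proposition~\ref{prop:gagliardi79b}, choosing the pair of colors for that computation to be exactly $\{i,j\}$ — the colors of our $2$-cycle. With that choice, the components of $\Gamma_{\{i,j\}}$ give the ``relation cycles'' $H_1,\dots,H_{t+1}$ and the components of $\Gamma_B = \Gamma_{\{1,2,3,4\}\setminus\{i,j\}}$ give the generators $x_1,\dots,x_s$ (one of them, say $x_{s+1}$, being deleted). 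The key point: the $2$-cycle $C_2(x,y)$ with colors $i,j$ is itself one of the components $H_k$ of $\Gamma_{\{i,j\}}$.

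Now I read off the corresponding relator $r_k$ from \eqref{tildar}. The cycle $H_k$ is $x\, e^i\, y\, e^j\, x$, so going around it reads off the word $x_a^{+1} x_b^{-1}$ where $G_a$ is the component of $\Gamma_B$ containing one traversal-vertex and $G_b$ the component containing the other. But both $x$ and $y$ lie in the \emph{same} component $Z$ of $\Gamma_B$ by hypothesis, so $a = b$ (the same generator symbol), and the relator $r_k$ is $x_a x_a^{-1} = \emptyset$, the empty word — a trivial relation — \emph{provided} $Z$ is not the deleted component. If $Z$ is the deleted component $G_{s+1}$, then the relator $r_k$ becomes empty after deletion of $x_{s+1}^{\pm 1}$'s, which is the same conclusion. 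Either way, one of the defining relators of $\pi_1(M,x)$ in the Gagliardi presentation is trivial (the empty word), hence can be discarded. Discarding a trivial relator from a finite presentation $\langle x_1,\dots,x_s \mid r_1,\dots,r_{t+1}\rangle$ — wait, I should be careful: the presentation from Proposition~\ref{prop:gagliardi79b} is $\langle x_1,\dots,x_s \mid r_1,\dots,r_t\rangle$ with $t+1 = g_{ij} = s+1$ components of $\Gamma_{\{i,j\}}$, so $t = s$; one of the $t$ relators being empty leaves $\langle x_1,\dots,x_s \mid r_1,\dots,\widehat{r_k},\dots,r_s\rangle$, a presentation with $s$ generators and $s-1$ relators. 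Such a group is a free product of $\mathbb{Z}$ with the group $H := \langle x_1,\dots,\widehat{x_k'},\dots,x_s \mid r_1,\dots,\widehat{r_k},\dots,r_s\rangle$ on the remaining $s-1$ generators — but this last step needs the empty relator to actually ``use up'' a free $\mathbb{Z}$ factor cleanly, which it does: an empty relator among $s$ generators and $s-1$ further relators means the presentation is literally $\langle S \mid R'\rangle$ with $|R'| = s-1$; pick any generator not forced — more precisely, the presented group is $F(S)/N(R')$ and since $r_k = \emptyset$ contributed nothing, we directly have $\pi_1(M,x) = \langle x_1,\dots,x_s \mid r_1,\dots,\widehat{r_k},\dots,r_s\rangle$, and I claim this splits as $\mathbb{Z}\ast H$.

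\textbf{The main obstacle} is precisely that last splitting claim: having a presentation with one more generator than relator does \emph{not} in general make a group a free product with $\mathbb{Z}$ (e.g.\ there are acyclic groups with deficiency-$1$ presentations that are not free products). So I cannot argue abstractly from the relator count alone. The resolution must come from the structure of the Gagliardi relators: because the $2$-cycle sits inside a single component $Z$ of $\Gamma_B$, that component $Z$ — equivalently, the generator $x_a$ attached to it — genuinely does not appear in \emph{any} relator, or appears only in the now-deleted trivial one. Here is where I need to look more carefully: I would re-examine whether $x_a = x_Z$ can occur in the other relators $r_\ell$ ($\ell \neq k$). In fact it can; so the honest argument is different. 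The correct move: since $x$ and $y$ are joined by \emph{two} edges (colors $i$ and $j$), in the crystallization $\mathcal{K}(\Gamma)$ the two facets $\sigma_x, \sigma_y$ share two $2$-faces; this is a very degenerate local configuration. Topologically, the subcomplex $\sigma_x \cup \sigma_y$ glued along two triangles is (the cone structure shows) $\sigma_x\cup_{\text{two triangles}}\sigma_y \simeq$ a space with a $1$-sphere's worth of non-simple-connectivity, and a Mayer–Vietoris / van Kampen decomposition of $M$ along the boundary $2$-sphere of a regular neighborhood of this subcomplex exhibits $M$ as a connected sum $M' \# (S^2\times S^1)$ — equivalently $M' \# (S^2\tilde\times S^1)$ in the non-orientable case — whence $\pi_1(M,x) \cong \pi_1(M',x) \ast \mathbb{Z}$, giving $H = \pi_1(M')$. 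I would present this van Kampen argument: the regular neighborhood of $\sigma_x\cup\sigma_y$ deformation-retracts onto a $2$-complex whose fundamental group is free of rank $1$ (it is a wedge $S^1\vee(\text{contractible})$ coming from the two-triangle identification), its boundary is a $2$-sphere, and $M$ is obtained by gluing a once-punctured $M'$ to this neighborhood along that $S^2$; van Kampen over the $S^2$ (simply connected) gives the free product. Thus the plan: (1) recognize the $2$-cycle as a type-$2$ dipole or not; (2) if a genuine dipole, cancel it, lose two vertices; (3) if not — i.e.\ $x,y$ in the same $\Gamma_B$-component — show the corresponding Gagliardi relator degenerates to the empty word and/or run the van Kampen neighborhood argument to split off a free $\mathbb{Z}$ factor; the delicate point throughout is step (3), pinning down exactly why the relator vanishes and why that forces the free-product splitting rather than merely a deficiency bound.
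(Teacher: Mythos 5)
Your handling of the dipole case is the same as the paper's: when the two ends $x,y$ of the $2$-cycle lie in different components of $\Gamma_B$, $\Gamma[\{x,y\}]$ is a type-$2$ dipole and cancelling it gives a crystallization of $M$ with $m-2$ vertices. The gap is entirely in the other case, and you have diagnosed half of it yourself: with your choice of colors (the two colors $i,j$ of the $2$-cycle used as the ``relation'' pair), the $2$-cycle contributes a trivial relator and all you obtain is a presentation of deficiency at least $1$, which, as you say, does not force a free splitting. The van Kampen argument you fall back on does not close this gap. First, as sketched it never uses the hypothesis that $x$ and $y$ lie in the same component of $\Gamma_B$; if it were valid it would apply verbatim in the dipole case and show that \emph{every} crystallization containing a $2$-cycle has $\pi_1\cong\mathbb{Z}\ast H$, which is false (attach a type-$2$ dipole to the standard $2$-vertex crystallization of $S^3$: the resulting graph has a $2$-cycle but trivial fundamental group). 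Second, the local model is wrong: the two identified $2$-faces of $\sigma_x$ (those opposite $v_i$ and $v_j$) share the edge $(\sigma_x)_{ij}$, so $\sigma_x\cup\sigma_y$ is two $3$-balls glued along a single disk, i.e.\ a ball, and no free factor is visible from that subcomplex alone.

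The paper's proof stays entirely inside Gagliardi's presentation but makes the \emph{opposite} choice of colors, and that is the missing idea. Apply Proposition \ref{prop:gagliardi79b} with the chosen pair being the complementary colors $\{3,4\}$ (when the $2$-cycle is colored $1,2$): then the generators correspond to the components of $\Gamma_{12}$, so the $2$-cycle is itself a generator component $G_1$ with generator $x_1$, while the relators correspond to the components of $\Gamma_{34}$. The hypothesis of the remaining case --- $G_1$ meets only one component $H_1$ of $\Gamma_{34}$ --- says precisely that $x_1^{\pm1}$ occurs in $\tilde r_1$ and in no other $\tilde r_k$. Since Gagliardi's theorem allows discarding any one of the $g_{34}$ relators, discard $r_1$: the presentation $\langle x_1,\dots,x_q\mid r_2,\dots,r_{q+1}\rangle$ then contains $x_1$ in no relator and splits literally as $\langle x_1\rangle\ast\langle x_2,\dots,x_q\mid r_2,\dots,r_{q+1}\rangle\cong\mathbb{Z}\ast H$. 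This is exactly the ``generator occurring in no relator'' phenomenon you were hunting for; it materializes once the roles of the two color pairs are swapped.
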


\begin{figure}[ht]
\tikzstyle{vert}=[circle, draw, fill=black!100, inner sep=0pt, minimum width=4pt] \tikzstyle{vertex}=[circle,
draw, fill=black!00, inner sep=0pt, minimum width=4pt] \tikzstyle{ver}=[] \tikzstyle{extra}=[circle, draw,
fill=black!50, inner sep=0pt, minimum width=2pt] \tikzstyle{edge} = [draw,thick,-] \tikzstyle{arrow} =
[draw,thick,->] \centering
\begin{tikzpicture}[scale=0.3]
\begin{scope}[]
\node[vertex] (1) at (-2,6){}; \node[vert] (2) at (-4,4){}; \node[vertex] (3) at (-6,2){}; \node[ver] (98) at
(-6,2.6){$v$}; \node[vert] (4) at (-8,2){}; \node[vertex] (5) at (-10,4){}; \node[vert] (6) at (-12,6){};
\node[vert] (7) at (2,6){}; \node[vertex] (8) at (4,4){}; \node[vert] (9) at (6,2){}; \node[ver] (10) at (8,2){};
\node[vert] (11) at (10,4){}; \node[vertex] (12) at (12,6){}; \node[ver] (100) at (-7,-7){$\Gamma$}; \node[ver]
(101) at (7,-7){$\Gamma^1$}; \node[extra] (25) at (-10,-6){}; \node[extra] (26) at (-7,-6){}; \node[extra] (27)
at (-4,-6){}; \node[extra] (28) at (-10,6){}; \node[extra] (29) at (-7,6){}; \node[extra] (30) at (-4,6){};
\node[extra] (31) at (10,-6){}; \node[extra] (32) at (7,-6){}; \node[extra] (33) at (4,-6){}; \node[extra] (34)
at (10,6){}; \node[extra] (35) at (7,6){}; \node[extra] (36) at (4,6){};

\node[ver] (a1) at (12,2){$1$}; \node[ver] (a2) at (12,1){$2$}; \node[ver](a3) at (12,0){$3$}; \node[ver](a4) at
(12,-1){$4$};

\node[ver] (a5) at (16,2){}; \node[ver](a6) at (16,1){}; \node[ver](a7) at (16,0){}; \node[ver] (a8) at
(16,-1){};
\end{scope}
\begin{scope}[rotate=180]
\node[vertex] (13) at (-2,6){}; \node[vert] (14) at (-4,4){}; \node[vertex] (15) at (-6,2){}; \node[ver] (16) at
(-8,2){}; \node[vertex] (17) at (-10,4){}; \node[vert] (18) at (-12,6){}; \node[vert] (19) at (2,6){};
\node[vertex] (20) at (4,4){}; \node[vert] (21) at (6,2){}; \node[ver] (99) at (6,2.6){$w$}; \node[vertex] (22)
at (8,2){}; \node[vert] (23) at (10,4){}; \node[vertex] (24) at (12,6){};
\end{scope}
\foreach \x/\y in {2/3,4/5,8/9,14/15,20/21,22/23}{
    \path[edge, dotted] (\x) -- (\y);}
    \foreach \x/\y in {1/2,3/4,5/6,7/8,11/12,13/14,17/18,19/20,21/22,23/24}{
    \path[edge, dashed] (\x) -- (\y);}
    \draw[edge] plot [smooth,tension=1.5] coordinates{(3) (-5,0) (21) };
    \draw[edge] plot [smooth,tension=1.5] coordinates{(3) (-7,0) (21) };
    \draw[line width=3pt, line cap=round, dash pattern=on 0pt off 2 \pgflinewidth] plot [smooth,tension=1.5]
    coordinates{(3) (-7,0) (21) };
    \draw[edge, dashed] plot [smooth,tension=1.5] coordinates{(9) (7,0) (15) };
    \draw[edge, dotted] plot [smooth,tension=0.5] coordinates{(11) (10) (16) (17) };
    \path[arrow] (-1,0) -- (1,0);

    \foreach \x/\y in {a1/a5,a2/a6}{
    \path[edge] (\x) -- (\y);}
    \draw[line width=3pt, line cap=round, dash pattern=on 0pt off 2 \pgflinewidth] (a1) -- (a5);
    \path[edge,dashed] (a4) -- (a8);
    \path[edge,dotted] (a3) -- (a7);
 \end{tikzpicture}
\caption{Cancellation of a dipole of type 2}\label{fig:dipole}
\end{figure}
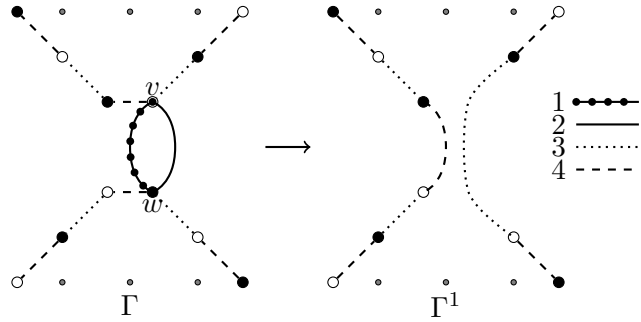

\vspace{-4mm}

\begin{proof}
Without loss, assume that $\Gamma$ has a $2$-cycle with color $1$ and $2$, i.e., $\Gamma_{12}$ has a component of
length $2$. If this $2$-cycle touches two different components of $\Gamma_{34}$ (say, at vertices $v$ and $w$,
respectively), then $\Gamma[\{v, w\}]$ is a  $3$-dimensional dipole of type $2$. Therefore, the crystallization
$\Gamma$ can be reduced to a  crystallization $\Gamma^1$ of $M$ with vertex set $V(\Gamma)\setminus\{v, w\}$ so
that $\Gamma^1_{12}$ (resp., $\Gamma^1_{34}$) has one less components than $\Gamma_{12}$ (resp., $\Gamma_{34}$)
as in Fig.  \ref{fig:dipole} (see \cite{fg82}). Thus, $M$ has a crystallization (namely, $\Gamma^1$) with $m-2$
vertices.

So, assume that the  $2$-cycle (say $G_1$) touches only one component (say, $H_1$) of $\Gamma_{34}$. Let $G_1,
\dots, G_{q+1}$ be the components of $\Gamma_{12}$ and $H_1, \dots, H_{q+1}$ be the components of $\Gamma_{34}$,
where $q+1 = g_{12} = g_{34}$. Let $x_1, \dots, x_{q+1}$ and $r_1, \dots, r_{q+1}$ be as in Proposition
\ref{prop:gagliardi79b}. Then, by Proposition \ref{prop:gagliardi79b}, $\pi_1(M, x)$ has a presentation of the
form $\langle x_1, x_2, \dots, x_q \, | \, r_2, r_3,\dots,r_{q+1} \rangle$. Since $G_1$ touches only $H_1$, from
the definition of $\tilde{r}_k$ in Eq. \eqref{tildar}, $\tilde{r}_k$ does not contain $x_1^{\pm 1}$ for $k\neq
1$. Therefore, $\langle x_1, x_2, \dots, x_q \, | \, r_2, \dots, r_{q+1} \rangle = \langle x_1 \rangle\ast
\langle x_2, \dots, x_q \, | \, r_2, \dots,r_{q+1} \rangle$. This proves the lemma.
\end{proof}

\begin{lemma}\label{unique:J12}
There exist exactly three $8$-vertex crystallizations of non-simply connected, connected, closed $3$-manifolds.
Moreover, these three are crystallizations of $S^2 \times S^1$, $\TPSS$ and $\mathbb{RP}^3$ respectively.
\end{lemma}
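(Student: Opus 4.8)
The plan is to enumerate all $8$-vertex crystallizations $(\Gamma,\gamma)$ of non-simply-connected closed $3$-manifolds by systematically controlling the color-pair components. By Proposition~\ref{prop:gagliardi79a}, if $m=8$ then $g_{12}+g_{13}+g_{14}=2+m/2=6$ and $g_{ij}=g_{kl}$ for complementary pairs, so the common values $(g_{12},g_{13},g_{14})$ form an unordered partition of $6$ into three positive parts. First I would rule out partitions forcing a degenerate situation: if some $g_{ij}=1$, say $g_{12}=1$, then $\Gamma_{12}$ is a single cycle on all $8$ vertices and, running the argument of Proposition~\ref{prop:gagliardi79b} with colors $3,4$, the fundamental group has a presentation with $g_{34}-1$ generators; by the bound $m\ge\psi(M)$ from Theorem~\ref{theorem:T1} and $\psi(M)\ge 8$ for $M\neq S^3$, together with the explicit small-weight analysis of Lemma~\ref{lemma:psi(G)}, one checks which partitions of $6$ are even compatible with a non-simply-connected $M$. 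Concretely, a partition with a part equal to $1$ together with $g=2$ somewhere is the only way to keep the vertex count at $8$ while having nontrivial $\pi_1$; I expect the surviving cases to be essentially the partition $(1,2,3)$ and its relabelings, plus possibly $(2,2,2)$.

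Next, using Lemma~\ref{lemma:no2cycle}: whenever $\Gamma$ has a $2$-cycle (equivalently some $g_{ij}=2$ realized partly by a bigon, or more carefully a component of $\Gamma_{ij}$ of length $2$), either we can cancel a dipole of type $2$ and drop to a $6$-vertex crystallization — but a $6$-vertex crystallization of a non-simply-connected $3$-manifold would violate $m\ge\psi(M)\ge 8$, contradiction — or $\pi_1(M,x)\cong\mathbb{Z}\ast H$. In the latter case, since $M$ is a closed $3$-manifold with $8$ vertices, the only possibility (via the prime decomposition / Kneser and the vertex bound, or directly by reading off the presentation from Proposition~\ref{prop:gagliardi79b} with so few generators) is $H$ trivial and $\pi_1(M)\cong\mathbb{Z}$, forcing $M=S^2\times S^1$ or $M=\TPSS$. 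So I would first handle the $2$-cycle case to pin down the $\mathbb{Z}$-fundamental-group crystallizations, showing there are exactly two of them (the orientable $S^2\times S^1$ one $\mathcal{J}_1$ and the non-orientable $\TPSS$ one, distinguished by bipartiteness via Proposition~\ref{prop:ca-ga-pe80}), and verifying each is unique by a short combinatorial reconstruction from the forced component data.

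For the remaining case — no $2$-cycle at all, so every $g_{ij}\in\{1,3\}$ is realized by cycles of length $\ge 4$, compatible only with the partition $(1,1,4)$? no, sum must be $6$; with all parts $\ge$ length-$4$ cycles the count $8=m$ forces each $\Gamma_{ij}$ with $g_{ij}$ parts to have total length $8$, so $g_{ij}=1$ gives one $8$-cycle and $g_{ij}=2$ gives two $4$-cycles, etc. — I would argue the relevant partition is $(2,2,2)$ with each color pair splitting into two $4$-cycles; then the dual pseudotriangulation has $f_3=8$, $f_1=12$, and $\pi_1$ has a presentation with one generator and one relation $r$ with $\lambda(r)=8$ and $r$ not a proper power nor trivial. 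Matching against Lemma~\ref{lemma:psi(G)} ($\psi(\mathbb{Z}_2)=8$ via $\langle x\mid x^2\rangle$) forces $r=x^{\pm 2}$ up to conjugacy, i.e.\ $M=\mathbb{RP}^3$, and I reconstruct the unique such $\Gamma$ directly: knowing all three color-pair splittings into $4$-cycles plus the contractedness condition determines the incidences uniquely up to color relabeling.

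\textbf{Main obstacle.} The genuinely delicate step is the final uniqueness reconstruction: showing that the combinatorial data (the partition of components, the cycle lengths, and contractedness $\Gamma_{\{i,j,k\}}$ connected) determines $\Gamma$ up to isomorphism, with no spurious extra candidates. This requires a careful case-by-case gluing argument — essentially building the $4$-colored graph edge by edge and checking that the only completions satisfying Proposition~\ref{prop:gagliardi79a}(i)--(ii) and contractedness are the three claimed graphs — and making sure that the three resulting manifolds are pairwise non-homeomorphic (here orientability via Proposition~\ref{prop:ca-ga-pe80} separates $S^2\times S^1$ from $\TPSS$, and $\pi_1\cong\mathbb{Z}_2\not\cong\mathbb{Z}$ separates $\mathbb{RP}^3$). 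I would expect to lean on the explicit graph $\mathcal{J}_1$ of Fig.~\ref{fig:J12} as the anchor for the $\mathbb{Z}$-case and to do the $\mathbb{RP}^3$ reconstruction by hand.
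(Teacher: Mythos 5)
Your overall skeleton --- Gagliardi's conditions to pin down the $g_{ij}$, a split into the $2$-cycle and no-$2$-cycle cases, Lemma \ref{lemma:no2cycle} plus the bound $\psi(M)\geq 8$ to force $\pi_1\cong\mathbb{Z}$ in the first case, and bipartiteness (Proposition \ref{prop:ca-ga-pe80}) to separate $S^2\times S^1$ from the twisted bundle --- is exactly the paper's strategy. But there are two genuine gaps. First, your partition analysis goes wrong at the start: no part can equal $1$, because by Proposition \ref{prop:gagliardi79b} the presentation built from a colour pair $\{i,j\}$ has $g_{ij}-1$ generators, so $g_{ij}=1$ for any pair would make $\pi_1(M)$ trivial. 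Since $M$ is not simply connected, $g_{ij}\geq 2$ for every pair, and with $g_{12}+g_{13}+g_{14}=6$ this forces $g_{ij}=2$ for all $i\neq j$; your candidate partition $(1,2,3)$ never occurs. This observation, which you miss, is what makes the enumeration tractable: every $\Gamma_{ij}$ is $C_2\sqcup C_6$ or $C_4\sqcup C_4$ and nothing else.

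Second, the heart of the lemma --- that there are \emph{exactly} three graphs --- is the uniqueness reconstruction you defer as the ``main obstacle,'' and it is not routine. In the $2$-cycle case one must show the second component of $\Gamma_{34}$ is a $6$-cycle and not a $4$-cycle (a $4$-cycle would represent $xy^{-1}xy^{-1}$, which does not become the identity upon deleting either generator, contradicting $\pi_1\cong\mathbb{Z}$), and then check that the remaining colour-$3$ and colour-$4$ edges are forced up to exactly the two completions $\mathcal{J}_1$ and $\mathcal{J}_2$. In the no-$2$-cycle case contractedness is the essential tool: for instance, if $a_1d_1$ were a colour-$3$ edge then $\Gamma[\{a_1,b_1,c_1,d_1\}]$ would be a proper component of $\Gamma_{\{2,3,4\}}$, which is impossible; this is what forces $\mathcal{K}_{2,1}$. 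Finally, your plan to identify $\mathbb{RP}^3$ by ``matching against Lemma \ref{lemma:psi(G)}'' does not quite work as stated: the single relation read off a $4$-cycle of $\Gamma_{34}$ has weight at most $4$, not $8$ (indeed $\lambda(x^{\pm 2})=4$), and a priori it could be the empty word, giving $\pi_1\cong\mathbb{Z}$. The paper excludes that possibility not by weight considerations but by first determining the graph uniquely and only then computing the relation to be $x^2$.
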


\begin{proof}
Let $(\Gamma, \gamma)$ be an $8$-vertex crystallization of a non simply connected, connected, closed $3$-manifold
$M$. By Proposition \ref{prop:gagliardi79a}, $g_{12}+g_{13}+g_{14} = 8/2+2=6$ and $g_{ij}=g_{kl}$ for $i, j, k,
l$ distinct. Since $\pi_1(M, *)$  has at least one generator, $g_{ij} \geq 2$ for $1\leq i \neq j\leq 4$. This
implies that $g_{ij}=2$ and hence $\Gamma_{ij}$ is of the form $C_2\sqcup C_6$ or $C_4\sqcup C_4$ for $1\leq i
\neq j\leq 4$.

\smallskip

\noindent {\em Case $1$:} $(\Gamma, \gamma)$ has a $2$-cycle. Since $M$ is not simply connected, $M$ has no
crystallization with less than 8 vertices. Therefore, by Lemma \ref{lemma:no2cycle}, $\pi_1(M,*)$ must have a
torsion free element. Again, $g_{ij}=2$ implies $\pi_1(M, *)$ is generated by one element and hence isomorphic to
$\mathbb{Z}$. Therefore, $M \cong S^{\hspace{.2mm}2}\times S^1$ or $\TPSS$. Assume, without loss, $\Gamma_{12} =
G_1\sqcup G_2$, where $G_1=C_2(v_3, v_4)$, $G_2=C_6(v_1,v_2,v_5,v_6,v_7,v_8)$. Then there is no edge between $v_3$ and $v_4$ of color 3 or 4 and (see the proof of Lemma
\ref{lemma:no2cycle}), $G_1$ touches only one component of $\Gamma_{34}$. Let $\Gamma_{34} = H_1 \sqcup H_2$, where $G_1\cap H_1 = \emptyset$. Let $x$ and $y$ be the generators
corresponding to the components $G_1$ and $G_2$ respectively. If $H_2$ is a 4-cycle then $H_2$ represents
$xy^{-1}xy^{-1}$ by choosing some $v_1$, $i, j$ as in Eq. \eqref{tildar}. But $xy^{-1}xy^{-1}$ does not give
identity relation by deleting $x$ or $y$. Therefore, $H_2$ is a 6-cycle and hence $H_1$ is a 2-cycle.
Similarly, $G_2 \cap H_2 = \emptyset$. Since the number of edges between any pair of vertices is at most 2, we can assume that $H_1=C_2(v_1, v_6)$.  Assume, without loss, that there is an edge of color 4 between $v_2$ and $v_3$. Since $\Gamma_{24}$ has two components, this implies $\Gamma_{24} = 
C_4(v_4, v_3, v_2, v_5) \sqcup C_4(v_8, v_1, v_6, v_7)$. So, there exists an edge of color 4 between 
$v_4$ and $v_5$ (resp. $v_7$ and $v_8$). 
Since $H_2$ is a 6-cycle on the vertex set $\{v_1, \dots, v_8\} \setminus \{v_1, v_6\}$, 
this implies that $H_2=C_6(v_2, v_3, v_8,v_7,v_4, v_5)$ or
$C_6(v_2, v_3, v_7, v_8, v_4, v_5)$. In the first case, $(\Gamma, \gamma) = \mathcal{J}_1$ and in the second
case,  $(\Gamma, \gamma) = \mathcal{J}_2$ given in Fig. \ref{fig:J12} (a) and (b) respectively. In the first
case, $\Gamma$ is bipartite. Therefore, $M$ is orientable and hence equal to $S^2 \times S^1$. In the second
case, $\Gamma$ is not bipartite. Therefore, $M$ is non-orientable and hence equal to $\TPSS$.


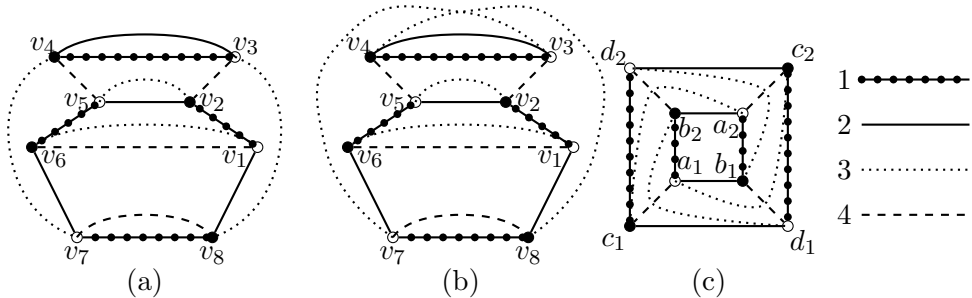
\begin{figure}[ht]
\tikzstyle{vert}=[circle, draw, fill=black!100, inner sep=0pt, minimum width=4pt]
\tikzstyle{vertex}=[circle, draw, fill=black!00, inner sep=0pt, minimum width=4pt]
\tikzstyle{ver}=[]
\tikzstyle{extra}=[circle, draw, fill=black!50, inner sep=0pt, minimum width=2pt]
\tikzstyle{edge} = [draw,thick,-]

\centering
\begin{tikzpicture}[scale=0.3]
\begin{scope}[shift={(-15,0)}]
\foreach \x/\y in {5/0,4/4,-2/2,-3/-4}{ \node[vertex] (\x) at (\x,\y){};} \foreach \x/\y in {2/2,-4/4,-5/0,3/-4}{
\node[vert] (\x) at (\x,\y){};} \foreach \x/\y in {5/3,5/2,2/-2,-2/-5,-5/-3,4/-4,3/-3}{ \path[edge] (\x) --
(\y);} \foreach \x/\y in {5/2,-2/-5,4/-4,3/-3}{ \draw [line width=3pt, line cap=round, dash pattern=on 0pt off
2\pgflinewidth]  (\x) -- (\y);} \draw[edge] plot [smooth,tension=1.5] coordinates{(-4)(0,5)(4)}; \foreach \x/\y
in {5/-5,4/2,-4/-2}{ \path[edge, dashed] (\x) -- (\y);} \draw[edge, dashed] plot [smooth,tension=1.5]
coordinates{(-3)(0,-3)(3)}; \draw[edge, dotted] plot [smooth,tension=1.5] coordinates{(-2)(0,3)(2)}; \draw[edge,
dotted] plot [smooth,tension=1.5] coordinates{(-5)(0,1)(5)}; \draw[edge, dotted] plot [smooth,tension=1.5]
coordinates{(-4)(-6,0)(-3)}; \draw[edge, dotted] plot [smooth,tension=1.5] coordinates{(4)(6,0)(3)}; \node[ver]
(100) at (4,-0.5){$v_1$}; \node[ver] (101) at (3,2){$v_2$}; \node[ver] (102) at (4.5,4.5){$v_3$}; \node[ver]
(103) at (-4.5,4.5){$v_4$}; \node[ver] (104) at (-3,2){$v_5$}; \node[ver] (105) at (-4,-0.5){$v_6$}; \node[ver]
(106) at (-3,-4.8){$v_7$}; \node[ver] (107) at (3,-4.8){$v_8$}; \node[ver] (108) at (0,-6){(a)};
\end{scope}
\begin{scope}[shift={(-1,0)}]
\foreach \x/\y in {5/0,4/4,-2/2,-3/-4}{ \node[vertex] (\x) at (\x,\y){};} \foreach \x/\y in {2/2,-4/4,-5/0,3/-4}{
\node[vert] (\x) at (\x,\y){};} \foreach \x/\y in {5/3,5/2,2/-2,-2/-5,-5/-3,4/-4,3/-3}{ \path[edge] (\x) --
(\y);} \foreach \x/\y in {5/2,-2/-5,4/-4,3/-3}{ \draw [line width=3pt, line cap=round, dash pattern=on 0pt off
2\pgflinewidth]  (\x) -- (\y);} \draw[edge] plot [smooth,tension=1.5] coordinates{(-4)(0,5)(4)}; \foreach \x/\y
in {5/-5,4/2,-4/-2}{ \path[edge, dashed] (\x) -- (\y);} \draw[edge, dashed] plot [smooth,tension=1.5]
coordinates{(-3)(0,-3)(3)}; \draw[edge, dotted] plot [smooth,tension=1.5] coordinates{(-2)(0,3)(2)}; \draw[edge,
dotted] plot [smooth,tension=1.5] coordinates{(-5)(0,1)(5)}; \draw[edge, dotted] plot [smooth,tension=1]
coordinates{(-3)(-6,0)(-5,5)(0,6)(4)}; \draw[edge, dotted] plot [smooth,tension=1]
coordinates{(3)(6,0)(5,5)(0,6)(-4)}; \node[ver] (200) at (4,-0.5){$v_1$}; \node[ver] (201) at (3,2){$v_2$};
\node[ver] (202) at (4.5,4.5){$v_3$}; \node[ver] (203) at (-4.5,4.5){$v_4$}; \node[ver] (204) at (-3,2){$v_5$};
\node[ver] (205) at (-4,-0.5){$v_6$}; \node[ver] (206) at (-3,-4.8){$v_7$}; \node[ver] (207) at (3,-4.8){$v_8$};
\node[ver] (208) at (0,-6){(b)};
\end{scope}

\begin{scope}[shift={(19,0)}]
\node[ver] (300) at (-3,3){$1$};
\node[ver] (301) at (-3,1){$2$};
\node[ver] (302) at (-3,-1){$3$};
\node[ver] (303) at (-3,-3){$4$};
\node[ver] (304) at (3,3){};
\node[ver] (305) at (3,1){};
\node[ver] (306) at (3,-1){};
\node[ver] (307) at (3,-3){};
\path[edge] (300) -- (304);
\draw [line width=3pt, line cap=round, dash pattern=on 0pt off 2\pgflinewidth]  (300) -- (304);
\path[edge] (301) -- (305);
\path[edge, dotted] (302) -- (306);
\path[edge, dashed] (303) -- (307);
\end{scope}
\begin{scope}[shift={(10,0)}]
\node[vertex] (a1) at (-1.5,-1.5){}; \node[vert] (a4) at (-1.5,1.5){}; \node[vert] (a2) at (1.5,-1.5){};
\node[vertex] (a3) at (1.5,1.5){}; \node[vert] (b1) at (-3.5,-3.5){}; \node[vertex] (b4) at (-3.5,3.5){};
\node[vertex] (b2) at (3.5,-3.5){}; \node[vert] (b3) at (3.5,3.5){}; \node[ver] (208) at (0,-6){(c)};

\foreach \x/\y in {a1/a2,a2/a3,a3/a4,a4/a1,b1/b2,b2/b3,b3/b4,b4/b1}{ \path[edge] (\x) -- (\y);} \foreach \x/\y in
{a3/a2,a1/a4,b3/b2,b1/b4}{ \draw [line width=3pt, line cap=round, dash pattern=on 0pt off 2\pgflinewidth]   (\x)
-- (\y);} \foreach \x/\y in {a1/b1,a2/b2,a3/b3,a4/b4}{ \path[edge, dashed] (\x) -- (\y);} \draw[edge, dotted]
plot [smooth,tension=0.5] coordinates{(a1)(2.5,-2.5)(b3)}; \draw[edge, dotted] plot [smooth,tension=0.5]
coordinates{(a2)(2.5,2.5)(b4)}; \draw[edge, dotted] plot [smooth,tension=0.5] coordinates{(a3)(-2.5,2.5)(b1)};
\draw[edge, dotted] plot [smooth,tension=0.5] coordinates{(a4)(-2.5,-2.5)(b2)}; \node[ver] () at
(-0.8,-0.8){$a_1$}; \node[ver] () at (-0.8,0.8){$b_2$}; \node[ver] () at (0.8,-0.8){$b_1$}; \node[ver] () at
(0.8,0.8){$a_2$}; \node[ver] () at (-4.2,-4.2){$c_1$}; \node[ver] () at (-4.2,4.2){$d_2$}; \node[ver] () at
(4.2,-4.2){$d_1$}; \node[ver] () at (4.2,4.2){$c_2$};
\end{scope}

\end{tikzpicture}
\vspace{-3mm} \caption{Crystallizations $\mathcal{J}_1$, $\mathcal{J}_2$ and $\mathcal{K}_{2,1}$} \label{fig:J12}
\end{figure}


\noindent {\em Case $2$:} $(\Gamma, \gamma)$ has no $2$-cycle. So, $\Gamma$ is a simple graph. Then,
$\Gamma_{ij}= C_4\sqcup C_4$ for $1\leq i \neq j\leq4$. Let $G_1=C_4(a_1,
b_1, a_2, b_2)$ and $G_2=
C_4(c_1, d_1, c_2, d_2)$ be the components of $\Gamma_{12}$. Assume, without loss, $a_1c_1$ is an edge of color $4$.
Then $a_2c_2$, $b_1d_1$, $b_2d_2$ are edges of color $4$ (since $\Gamma_{i4} = C_4\sqcup C_4$ for $1\leq i \leq
2$). If $a_1d_1$ is an edge of color $3$, then $C_4(b_1, a_1, d_1, c_1)$ would be a component of $\Gamma_{23}$. This implies $\Gamma[\{a_1, b_1, c_1, d_1\}]$ would be proper component of $\Gamma_{\{2, 3, 4\}}$. This is not possible since $(\Gamma, \gamma)$ is a contracted graph. Thus, $a_1d_1$ is not an edge of color $3$. Similarly, $a_1d_2$ is not an edge of color $3$.
These imply $a_1c_2$ is an edge of color $3$. Similarly, $b_1d_2$, $a_2c_1$ and 
$b_2d_1$ are edges of color $3$. Then,
$(\Gamma, \gamma) = \mathcal{K}_{2,1}$ given in Fig. \ref{fig:J12} (c).
Since $G_1 =C_4(a_1, b_1, a_2, b_2)$ and
$H_1=C_4(d_1, b_2, d_2, b_1)$ is a component of $\Gamma_{34}$, $\pi(M, \ast) = \langle x \, | \, x^2 \rangle\cong
\mathbb{Z}_2$. This implies that $M=\mathbb{RP}^3$. This completes the proof.
\end{proof}

\begin{lemma}\label{unique:K31}
There exists a unique $12$-vertex crystallization of $L(3,1)$.
\end{lemma}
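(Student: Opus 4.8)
The plan is to mimic the structure of the proof of Lemma \ref{unique:J12}, now working with a $12$-vertex crystallization $(\Gamma,\gamma)$ of $L(3,1)$. By Proposition \ref{prop:gagliardi79a}, $g_{12}+g_{13}+g_{14}=12/2+2=8$ and $g_{ij}=g_{kl}$ for all partitions of $\{1,2,3,4\}$, so each pair-value is one of the triples summing to $8$ with all entries equal in complementary pairs; combined with the fact that $\pi_1(L(3,1))\cong\mathbb{Z}_3$ needs at least one generator (so $g_{ij}\ge 2$ for all $i\ne j$), the only possibility is $g_{ij}=2$ for all $i\ne j$. (A priori $(2,2,4)$-type distributions are excluded because $g_{ij}=g_{kl}$ forces all six values equal, hence all equal to the common value $8/3$ is impossible unless they are $2,2,2,2,2,2$... wait, $g_{12}=g_{34}$, $g_{13}=g_{24}$, $g_{14}=g_{23}$, and these three values sum to $8$; a value $\ge 4$ would force another $\le 2$, and since all are $\ge 2$ the multiset of the three distinct values must be $\{2,2,4\}$ or $\{2,3,3\}$.) So I first need to rule out $\{2,3,3\}$ and $\{2,4,2\}$ and be left with a pair having $g_{ij}=2$; in fact I only need \emph{one} color pair, say $\{3,4\}$, with $g_{34}=2$ to run the argument, and whichever of $\{2,2,4\},\{2,3,3\}$ occurs, at least one pair has value $2$, so relabel colors so that $g_{12}=g_{34}=2$. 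Then each of $\Gamma_{12}$ and $\Gamma_{34}$ has exactly two components, and since $\Gamma$ is $4$-regular with $12$ vertices each component of $\Gamma_{12}$ is an even cycle, so the two components of $\Gamma_{12}$ have lengths adding to $12$: the options are $(2,10)$, $(4,8)$ or $(6,6)$.

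Next I would invoke Lemma \ref{lemma:no2cycle}: if $\Gamma$ has a $2$-cycle then either $L(3,1)$ has an $8$-vertex (or $10$-vertex) crystallization — impossible since by the enumeration/Theorem \ref{theorem:T1} $\Psi(L(3,1))=\psi(L(3,1))=12>8$ — or $\pi_1(L(3,1))\cong\mathbb{Z}\ast H$, which is also false since $\mathbb{Z}_3$ is finite and has no free factor $\mathbb{Z}$. Hence $\Gamma$ has no $2$-cycle, so $\Gamma$ is simple and $\Gamma_{12}=C_6\sqcup C_6$ (the $(4,8)$ split is also eliminated, since a component can have length $4$ or $6$; but $(4,8)$ means one $C_4$ and one $C_8$, and I need to kill that). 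Here is the key reduction: writing the presentation from Proposition \ref{prop:gagliardi79b} with colors $\{1,2\}$ giving generators $x_1$ (and $x_2$ corresponding to the second component, suppressed), $\pi_1=\langle x_1\mid r_2\rangle$ where $r_2$ is read off the nontrivial component of $\Gamma_{34}$, and $\varphi(S,R)\le 12$ by Remark \ref{remark:R1}. Since $\psi(\mathbb{Z}_3)=12$ (Lemma \ref{lemma:psi(G)}(ii)) and $\varphi$ for a one-generator presentation of $\mathbb{Z}_3$ equals $\lambda(x_1^{\pm3})+\lambda(r_3)=6+6=12$ exactly, equality is forced everywhere: the relator has weight exactly $6$, i.e. $r_2=x_1^{\pm 3}$ literally (no cancellation), so the nontrivial $\Gamma_{34}$-cycle $H_2$ has length exactly $6$ and winds three times around in a single direction, and likewise $\Gamma_{12}=C_6\sqcup C_6$. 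This rigidifies the combinatorics: a $C_6$ component in $\Gamma_{12}$ contracts to a single edge $u_{12}$ in $\mathcal X$, etc.

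From this point the argument is the same bookkeeping as in Lemma \ref{unique:J12}: using contractedness (every $\Gamma_{\{i,j,k\}}$ connected, so no $\Gamma[\{a,b,c,d\}]$ on four vertices closes up under the remaining three colors) together with $4$-regularity and the now-known cycle structures $\Gamma_{ij}=C_6\sqcup C_6$ for every pair $i\ne j$, I would fix a vertex and a color-$4$ edge, propagate forced color-$3$, color-$2$, color-$1$ edges one at a time (exactly as the ``$a_1c_1$ an edge of color $4$ $\Rightarrow a_2c_2,b_1d_1,b_2d_2$ of color $4$, then $a_1d_1$ not color $3$ by contractedness, $\ldots$'' chain in the previous proof), and check that all choices are either forced or related by an automorphism/relabelling, landing on a single graph, which one then recognizes as the standard $12$-vertex crystallization $\mathcal K_{3,1}$ of $L(3,1)$ (presumably exhibited in a figure): it is bipartite, so $L(3,1)$ is orientable, and reading off $\pi_1=\langle x\mid x^3\rangle\cong\mathbb{Z}_3$ together with the classification of lens spaces pins down the manifold as $L(3,1)$. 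The main obstacle is the last step: the case analysis of how the two $C_6$'s of $\Gamma_{12}$ are stitched together by colors $3$ and $4$ (and then verifying the induced $\Gamma_{13},\Gamma_{14},\Gamma_{23},\Gamma_{24}$ are all $C_6\sqcup C_6$ and that $\Gamma_{\{i,j,k\}}$ stays connected) is more branching than in the $8$-vertex case and must be organized carefully — for instance by choosing a spanning structure (a Hamiltonian-like path through one color pair) as a normal form and showing every valid completion is unique up to the color-preserving automorphisms of $\mathcal K_{3,1}$.
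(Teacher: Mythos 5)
Your overall strategy matches the paper's: rule out $2$-cycles via Lemma \ref{lemma:no2cycle}, use the counting identity of Proposition \ref{prop:gagliardi79a} together with the weight computation $\psi(\mathbb{Z}_3)=12$ to force $g_{12}=g_{34}=2$ and $\Gamma_{12}=\Gamma_{34}=C_6\sqcup C_6$, and then pin down the remaining edges by a forcing argument as in Lemma \ref{unique:J12}. However, there is a concrete error that would derail the final step: you assert that ``$\Gamma_{ij}=C_6\sqcup C_6$ for every pair $i\neq j$.'' This is impossible --- it would give $g_{12}+g_{13}+g_{14}=6$, contradicting the value $8$ you computed yourself. Since the absence of $2$-cycles forces every component of every $\Gamma_{ij}$ to be a cycle of length at least $4$, we get $g_{ij}\le 3$; combined with $g_{12}=2$ and $g_{12}+g_{13}+g_{14}=8$ this forces $g_{13}=g_{14}=g_{23}=g_{24}=3$, i.e.\ each of $\Gamma_{13},\Gamma_{14},\Gamma_{23},\Gamma_{24}$ is $C_4\sqcup C_4\sqcup C_4$ (this also disposes of your unresolved $\{2,2,4\}$ alternative, since $g_{ij}=4$ would require $16$ vertices). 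This $4$-cycle structure is not incidental: it is exactly what powers the forcing chain (an edge $a_1c_1$ of colour $4$ forces $b_3d_3$ of colour $4$ because the path $b_3a_1c_1d_3$ in $\Gamma_{14}$ must close into a $C_4$, and so on), so the propagation you describe would not go through from the cycle structure you posit.

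Second, the heart of the lemma --- showing that, after these normalizations, the colour-$3$ and colour-$4$ edges joining the two $6$-cycles of $\Gamma_{12}$ are determined up to isomorphism --- is only gestured at (``propagate forced edges \dots must be organized carefully''). In the paper this is an explicit, short but essential case analysis: fix $a_1c_1\in\gamma^{-1}(4)$, deduce the remaining five colour-$4$ edges from the $C_4$ structure of $\Gamma_{14}$ and $\Gamma_{24}$, then use contractedness of $\Gamma_{\{2,3,4\}}$ and $\Gamma_{\{1,3,4\}}$ to exclude $a_1d_1,a_1d_2,a_1d_3\in\gamma^{-1}(3)$, leaving $a_1c_2\in\gamma^{-1}(3)$ and hence $(\Gamma,\gamma)=\mathcal{K}_{3,1}$. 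Without carrying this out (and without the correct cycle structure to drive it), the uniqueness claim is not established.
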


\begin{proof}
By Lemma \ref{lemma:psi(G)} and Theorem \ref{theorem:T1}, $L(3,1)$ has no crystallization with less than 12 vertices. Let $(\Gamma, \gamma)$ be a $12$-vertex crystallization of $L(3,1)$. Since $\pi_1(L(3,1), *)$ ($\cong\mathbb{Z}_3$) has no torsion free element, by Lemma \ref{lemma:no2cycle}, $(\Gamma, \gamma)$ has no 2-cycle. So, $\Gamma$ is a simple graph and hence $g_{ij} \leq 3$ for $i\neq j$. 
Also (since $\cong\mathbb{Z}_3$ has at least one generator) $g_{ij} \geq 2$.
By Proposition \ref{prop:gagliardi79a},
$g_{12}+g_{13}+g_{14}=12/2+2=8$ and $g_{ij}=g_{kl}$ for $i, j, k, l$ distinct. So,  without loss, we can assume that 
$g_{12}=2, g_{13}=g_{14}=3$. Then $\Gamma_{ij} = C_4\sqcup C_4\sqcup C_4$ for $1\leq i\leq 2$ and $3\leq j \leq
4$. Let $G_1$, $G_2$ be the components  of $\Gamma_{12}$ and $H_1, H_2$ be the components of $\Gamma_{34}$ such
that $x_1, x_2$ represent the generators corresponding to $G_1$, $G_2$ respectively. Since $\langle x_j \,|\,
x_j^3\rangle$ is the only presentation in $\mathcal{P}_1(\mathbb{Z}_3)$, $H_i$ must yield the relations $x_j^{\pm
3}$, for $1 \leq i, j \leq 2$. Therefore, $G_i$ and $H_i$ are 6-cycles. Let $G_1=C_6(a_1,b_1,\dots, a_3, b_3)$ and $G_2=C_6(c_1,d_1, \dots, c_3, d_3)$. Assume, without loss, $a_1c_1\in \gamma^{-1}(4)$. 
Then $C_4(b_3, a_1, c_1, d_3) \subseteq \Gamma_{14}$ and hence $b_3d_3\in \gamma^{-1}(4)$. Similarly, $a_3c_3, b_2d_2, a_2c_2, b_1d_1\in \gamma^{-1}(4)$. Now, $a_1d_1\in \gamma^{-1}(3)$ 
$\Longrightarrow$ $C_4(a_1, d_1, c_1, b_1) \subseteq \Gamma_{23}$ $\Longrightarrow$ $\Gamma[\{a_1, b_1, c_1, d_1\}]$ is a component of $\Gamma_{\{2,3, 4\}}$. This is not possible since $\Gamma$ is a contracted graph. So, $a_1d_1\not\in \gamma^{-1}(3)$. Similarly, $a_1d_3\not\in \gamma^{-1}(3)$. 
Again, $a_1d_2\in \gamma^{-1}(3)$ $\Longrightarrow$ $C_4(a_1, d_2, c_2, b_1) \subseteq \Gamma_{23}$ 
$\Longrightarrow$ $C_4(a_2, b_1, c_2, d_1) \subseteq \Gamma_{13}$ $\Longrightarrow$ $\Gamma[\{a_2, b_1, c_2, d_1\}]$ is a component of $\Gamma_{\{1,3, 4\}}$, a contradiction. So, $a_1d_2\not\in \gamma^{-1}(3)$.
Therefore, up to an isomorphism, $a_1c_2 \in \gamma^{-1}(3)$. Then $b_1d_2$, $a_2c_3$, $b_2d_3$,
$a_3c_1$, $b_3d_1\in \gamma^{-1}(3)$ and hence $(\Gamma, \gamma) = \mathcal{K}_{3,1}$ given in Fig.
\ref{fig:K31} (a). Since $H_1=C_6(d_1, b_1, d_2, b_2, d_3, b_3)$ is one of
the two components of $\Gamma_{34}$, $(\Gamma, \gamma)$ yields $\langle x_1\, | \, x_1^3\rangle\cong \mathbb{Z}_3$.
So, $(\Gamma, \gamma)$ is a crystallization of $L(3,1)$. This completes the proof.
\end{proof}

\vspace{-6mm}

\begin{figure}[ht]
\tikzstyle{vert}=[circle, draw, fill=black!100, inner sep=0pt, minimum width=4pt]
\tikzstyle{vertex}=[circle, draw, fill=black!00, inner sep=0pt, minimum width=4pt]
\tikzstyle{ver}=[]
\tikzstyle{extra}=[circle, draw, fill=black!50, inner sep=0pt, minimum width=2pt]
\tikzstyle{edge} = [draw,thick,-]
\centering
\begin{tikzpicture}[scale=0.3]
\begin{scope}[shift={(-22,0)}]
\node[ver] (a1) at (8,-6){$1$};
\node[ver] (a2) at (8,-7){$2$};
\node[ver](a3) at (8,-8){$3$};
\node[ver](a4) at (8,-9){$4$};
\node[ver] (a5) at (13,-6){};
\node[ver](a6) at (13,-7){};
\node[ver](a7) at (13,-8){};
\node[ver] (a8) at (13,-9){};
\foreach \x/\y in {0/$a_1$,120/$a_2$,240/$a_3$}{
\node[ver] (\y) at (\x:4){\y};
    \node[vertex] (\y) at (\x:5){};
} \foreach \x/\y in
{60/$b_1$,180/$b_2$,300/$b_3$}{ \node[ver]
(\y) at (\x:4){\y};
    \node[vert] (\y) at (\x:5){};
} \foreach \x/\y in
{0/$c_1$,120/$c_2$,240/$c_3$}{
\node[ver] (\y) at (\x:9){\y};
    \node[vert] (\y) at (\x:8){};
} \foreach \x/\y in
{60/$d_1$,180/$d_2$,300/$d_3$}{
\node[ver] (\y) at (\x:9){\y};
    \node[vertex] (\y) at (\x:8){};
} \foreach \x/\y in {0/1,60/2,120/3,180/4,240/5,300/6}{ \node[ver] (\y) at (\x:6.5){}; }

\node[ver] (208) at (0,-9){(a)}; \node[ver] (208) at (22,-9){(b)};

\foreach \x/\y in {a1/a5,a2/a6,$a_1$/$b_1$,$b_1$/$a_2$,$a_2$/$b_2$,$b_2$/$a_3$,$a_3$/$b_3$,$a_1$/$b_3$,
$c_1$/$d_1$,$d_1$/$c_2$,$c_2$/$d_2$,$d_2$/$c_3$,$c_3$/$d_3$,$c_1$/$d_3$}{
    \path[edge] (\x) -- (\y);
}
\foreach \x/\y in {a1/a5,$a_2$/$b_1$,$a_3$/$b_2$,$a_1$/$b_3$,$c_2$/$d_1$,$c_3$/$d_2$,$c_1$/$d_3$}{
    \draw [line width=3pt, line cap=round, dash pattern=on 0pt off 2\pgflinewidth]  (\x) -- (\y);
}

\foreach \x/\y in
{$a_1$/$c_1$,$b_1$/$d_1$,$a_2$/$c_2$,$b_2$/$d_2$,$a_3$/$c_3$,$b_3$/$d_3$}{
    \path[edge, dashed] (\x) -- (\y);}
\foreach \x/\z/\y in
{$a_1$/2/$c_2$,$b_1$/3/$d_2$,$a_2$/4/$c_3$,$b_2$/5/$d_3$,$a_3$/6/$c_1$,
$b_3$/1/$d_1$}{ \draw[edge, dotted] plot [smooth,tension=0.5]
coordinates{(\x)(\z)(\y)}; }

\path[edge, dashed] (a4) --(a8);
\path[edge, dotted] (a3) -- (a7);
\end{scope}
\begin{scope}[shift={(-2.5,0)}, rotate=-120]
\foreach \x/\y in {300/x^{2},180/x^{4},60/x^{6}}{
\node[ver] (\y) at (\x:3.2){$\y$};
    \node[vertex] (\y) at (\x:4){};
}
\foreach \x/\y in {240/x^{3},120/x^{5},0/x^{1}}{
\node[ver] (\y) at (\x:3.1){$\y$};
    \node[vert] (\y) at (\x:4){};
}

\end{scope}
\begin{scope}[shift={(5,0)}]
\foreach \x/\y in {270/y^{4},90/y^{2}}{
\node[ver] (\y) at (\x:1){$\y$};
    \node[vertex] (\y) at (\x:2){};
}
\foreach \x/\y in {180/y^{1},0/y^{3}}{
\node[ver] (\y) at (\x:1){$\y$};
    \node[vert] (\y) at (\x:2){};
}

\end{scope}
\begin{scope}[rotate=-60]
\foreach \x/\y in {240/z^{3},120/z^{5},0/z^{1}}{
\node[ver] (\y) at (\x:9.8){$\y$};
    \node[vertex] (\y) at (\x:9){};
}
\foreach \x/\y in {300/z^{2},180/z^{4},60/z^{6}}{
\node[ver] (\y) at (\x:9.8){$\y$};
    \node[vert] (\y) at (\x:9){};
}

\end{scope}

\foreach \x/\y in
{x^{1}/x^{2},x^{2}/x^{3},x^{3}/x^{4},x^{4}/x^{5},x^{5}/x^{6},x^{6}/x^{1},y^{1}/y^{2},y^{2}/y^{3},y^{3}/y^{4},
y^{1}/y^{4},z^{1}/z^{2},z^{2}/z^{3},z^{3}/z^{4},z^{4}/z^{5},z^{5}/z^{6},z^{6}/z^{1}}{
    \path[edge] (\x) -- (\y);
}

\foreach \x/\y in
{x^{2}/x^{1},x^{4}/x^{3},x^{6}/x^{5},y^{2}/y^{1},
y^{3}/y^{4},z^{2}/z^{1},z^{4}/z^{3},z^{6}/z^{5}}{
    \draw [line width=3pt, line cap=round, dash pattern=on 0pt off 2\pgflinewidth]  (\x) -- (\y);
}

\foreach \x/\y in
{x^{2}/z^{2},x^{3}/z^{3},x^{4}/z^{4},x^{5}/y^{2},x^{6}/y^{1},x^{1}/z^{1},y^{3}/z^{5},y^{4}/z^{6}}{
    \path[edge, dashed] (\x) -- (\y);
}

\foreach \x/\y in
{x^{3}/z^{5},x^{2}/z^{4},x^{1}/z^{3},x^{6}/z^{2},z^{1}/y^{3},y^{2}/z^{6},y^{1}/x^{4},y^{4}/x^{5}}{
    \path[edge, dotted] (\x) -- (\y);
}
\end{tikzpicture}
\caption{Crystallizations $\mathcal{K}_{3,1}$ and $\mathcal{M}_{2,3}$}\label{fig:K31}
\end{figure}
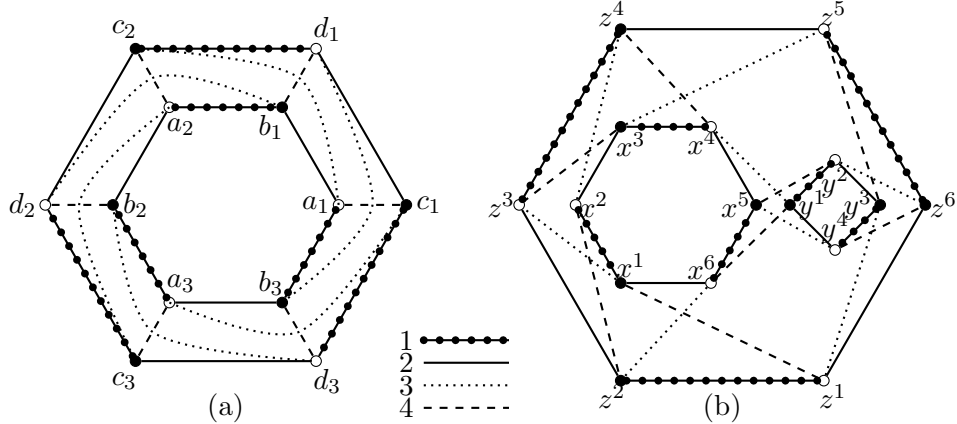

\vspace{-3mm}

\begin{lemma}\label{unique:M23}
There exists a unique $16$-vertex $4$-colored graph $(\Gamma, \gamma)$ which is a crystallization of a closed
connected $3$-manifold whose fundamental is $\mathbb{Z}_5$.
\end{lemma}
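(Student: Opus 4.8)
The plan is to mimic closely the structure of the proofs of Lemmas~\ref{unique:J12} and~\ref{unique:K31}, now for the group $\mathbb{Z}_5$ and $16$ vertices. First I would fix the combinatorial skeleton. By Lemma~\ref{lemma:psi(G)}(iii) and Theorem~\ref{theorem:T1}, $L(5,2)$ (and any closed $3$-manifold with fundamental group $\mathbb{Z}_5$) has no crystallization with fewer than $16$ vertices, so let $(\Gamma,\gamma)$ be a $16$-vertex crystallization of such a manifold $M$. Since $\pi_1(M,\ast)\cong\mathbb{Z}_5$ has no torsion-free element, Lemma~\ref{lemma:no2cycle} forces $\Gamma$ to have no $2$-cycle; thus $\Gamma$ is simple and $g_{ij}\le 3$. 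As $\mathbb{Z}_5$ needs at least one generator, $g_{ij}\ge 2$. Proposition~\ref{prop:gagliardi79a} gives $g_{12}+g_{13}+g_{14}=16/2+2=10$ with $g_{ij}=g_{kl}$ for $\{i,j,k,l\}=\{1,2,3,4\}$, so (up to renaming colors) $g_{12}=g_{34}=2$ and $g_{13}=g_{14}=g_{23}=g_{24}=3$. Hence each $\Gamma_{ij}$ with $\{i,j\}\ne\{1,2\},\{3,4\}$ is a disjoint union of three cycles on $16$ vertices, so of the form $C_4\sqcup C_4\sqcup C_8$, $C_4\sqcup C_6\sqcup C_6$, or $C_6\sqcup C_6\sqcup C_4$ (i.e. $C_4\sqcup C_6\sqcup C_6$); and each of $\Gamma_{12},\Gamma_{34}$ is $C_a\sqcup C_b$ with $a+b=16$, $a,b$ even and $\ge 4$.

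Next I would pin down the two components $G_1,G_2$ of $\Gamma_{12}$ and $H_1,H_2$ of $\Gamma_{34}$ and the associated presentation. By Proposition~\ref{prop:gagliardi79b}, $\langle x_1\mid r_2\rangle\cong\mathbb{Z}_5$, where $x_1$ corresponds to $G_1$ and $r_2$ to $H_2$ (using the convention of Subsection~\ref{crystal} that $r_1$ is dropped). So $r_2=x_1^{\pm 5}$, forcing $H_2$ to be a cycle of length at least $\lambda(x_1^{\pm 5})=10$; by Remark~\ref{remark:R1} the total vertex count $16$ is at least $\varphi(\{x_1\},\{r_1,r_2\})$, so actually $H_2$ is a $10$-cycle and $H_1$ a $6$-cycle. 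Dually, reversing the roles of $\{1,2\}$ and $\{3,4\}$ (there is a symmetric presentation with the generator from $H_1$ or $H_2$ and relation from $G_i$), one gets the analogous constraint on the $G_i$: $G_1$ is a $6$-cycle and $G_2$ a $10$-cycle. So both partitions of $\{v_1,\dots,v_{16}\}$ are a $6$-set and a $10$-set; I would then check they must be the same partition, by the same ``$G_1$ touches only one $H_j$'' argument used in Lemma~\ref{lemma:no2cycle}, or directly: since $r_2=x_1^{\pm 5}$ only involves $x_1$, the word read off $H_2$ lives entirely in one component of $\Gamma_{12}$, which must be $G_1$ (as $G_1$ is the short one and supplies $x_1$); one then verifies $H_1$ is disjoint from $G_1$ and $H_2$ disjoint from $G_2$, so $V(G_1)=V(H_2)$ is impossible for cardinality reasons and instead $V(G_1)\cap V(H_2)=\emptyset$, i.e. $V(G_1)=V(H_1)$'s complement matches up — I expect the bookkeeping here to need care but to be forced.

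With the $6$-cycle $G_1=C_6(a_1,b_1,a_2,b_2,a_3,b_3)$ and $10$-cycle $G_2=C_{10}(c_1,d_1,\dots,c_5,d_5)$ fixed, colors $1,2$ alternating, I would determine the colors $3$ and $4$ on the remaining edges exactly as in Lemma~\ref{unique:K31}: fix $a_1c_j\in\gamma^{-1}(4)$ for some $j$ (WLOG $j=1$), propagate the color-$4$ matching around $G_1,G_2$ using that $\Gamma_{14}$ and $\Gamma_{24}$ each have exactly $3$ components (each a $C_4$, $C_6$ or $C_8$), and rule out the ``wrong'' color-$3$ edges by the contractedness obstruction: an edge placement that would make $\Gamma[\{\text{four vertices}\}]$ a proper component of some $\Gamma_{\{i,j,k\}}$ is forbidden since $(\Gamma,\gamma)$ is contracted. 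This should cut the choices down, up to graph isomorphism, to a single graph $(\Gamma,\gamma)=\mathcal{M}_{2,3}$ (the one depicted in Fig.~\ref{fig:K31}(b)); I would then read off from $H_1=C_6(\dots)$ (a $6$-cycle built from the $c$'s and $d$'s) that $(\Gamma,\gamma)$ yields the presentation $\langle x_1\mid x_1^5\rangle\cong\mathbb{Z}_5$, confirming it is a crystallization of a $3$-manifold with fundamental group $\mathbb{Z}_5$, and that this manifold is $L(5,2)$ by identifying the gluing (or by the classification of $3$-manifolds with cyclic fundamental group together with orientability via Proposition~\ref{prop:ca-ga-pe80}). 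The main obstacle I anticipate is the last step: organizing the case analysis on the color-$3$/color-$4$ matchings between $G_1$ and $G_2$ so that one does \emph{not} miss a branch and so that all surviving branches are shown isomorphic to a single graph — in particular verifying that every color-$3$ edge placement other than the asserted one produces either a too-small invariant ($g_{ij}<3$, violating the count) or a component that violates contractedness.
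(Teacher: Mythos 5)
Your combinatorial skeleton is wrong, and the error propagates through the whole argument. First, the arithmetic: Proposition \ref{prop:gagliardi79a} gives $g_{12}+g_{13}+g_{14}=16/2+2=10$, but your proposed values $g_{12}=2$, $g_{13}=g_{14}=3$ sum to $8$. Also, with $16$ vertices and no $2$-cycles each $\Gamma_{ij}$ can have up to four components (four $4$-cycles), so your bound $g_{ij}\le 3$ is not valid here (it was valid in Lemma \ref{unique:K31} only because $12/4=3$). More fundamentally, the case $g_{ij}=2$ that your entire construction rests on is impossible, and ruling it out is the first real step of the paper's proof. If $g_{12}=g_{34}=2$, then by Proposition \ref{prop:gagliardi79b} the crystallization yields a one-generator presentation $\langle x_1\,|\,r_1\rangle\cong\mathbb{Z}_5$, so $r_1=x_1^{\pm5}$; moreover the second component of $\Gamma_{34}$ reads off a word $r_2\in\overline{\{r_1\}}$ with $\langle x_1\,|\,r_2\rangle\cong\mathbb{Z}_5$ as well, hence $r_2=x_1^{\pm5}$ too. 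By Eq.~\eqref{eq:4p} this forces $\varphi(S,R)=4\cdot 5=20$, while Remark \ref{remark:R1} says $\varphi(S,R)\le 16$ --- a contradiction. Concretely, your ``$H_1$ a $6$-cycle'' cannot happen: a $6$-cycle meeting only one generator class reads off a word of weight at most $6$ in $x_1$ alone, and no such word normally generates $N(x_1^5)$. (This is exactly why $\psi(\mathbb{Z}_5)=16$ is attained only by two-generator presentations such as $\langle x_1,x_2\,|\,x_1^2x_2^{-1},x_2^3x_1^{-1}\rangle$, not by $\langle x\,|\,x^5\rangle$, which has weight $20$.)

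The paper's proof therefore runs on a different skeleton: $g_{ij}\ge 3$ for all $i\ne j$, hence $(g_{12},g_{13},g_{14})=(3,3,4)$ up to permutation; $\Gamma_{14}$ and $\Gamma_{23}$ are unions of four $4$-cycles, and $\Gamma_{12},\Gamma_{34},\Gamma_{13},\Gamma_{24}$ are each $C_4\sqcup C_6\sqcup C_6$. The crystallization then realizes a minimum-weight two-generator presentation of $\mathbb{Z}_5$; the paper classifies all such presentations with $\varphi(S,R)=16$ (Claims 1 and 2, using the set $B$ from Lemma \ref{lemma:psi(G)}), reduces them to the relators $(x_1^3x_2^{-1})^{\pm1}$, $x_2^2x_1^{-1}$, $(x_1^2x_2)^{\pm1}$, and then shows the edge colorings are forced, giving $\mathcal{M}_{2,3}$ --- whose fundamental group presentation is $\langle x,y\,|\,x^3y^{-1},y^2x^{-1}\rangle$, not $\langle x\,|\,x^5\rangle$. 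Your later steps (propagating color-$4$ edges, using contractedness to forbid small components) are the right kind of moves, but they are applied to a configuration that cannot occur, so the proposal as written does not prove the lemma.
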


\begin{proof}

Let $(\Gamma, \gamma)$ be a $16$-vertex crystallization of a connected closed $3$-manifold $M$ and $\pi(M, \ast)
= \mathbb{Z}_5$.  Also, by Lemma \ref{lemma:psi(G)},
$\psi(M)=16$ and hence, by Theorem \ref{theorem:T1}, $(\Gamma, \gamma)$ is the crystallization of $M$ with
minimum number of vertices. Then, by Lemma \ref{lemma:no2cycle}, $(\Gamma, \gamma)$ has no 2-cycle. So, $\Gamma$ is a simple graph. Since $M$ is orientable, $\Gamma$ is bipartite.
By Proposition
\ref{prop:gagliardi79b} and Remark \ref{remark:R1}, $(\Gamma, \gamma)$ yields a presentation $\langle S\, | \,
R\rangle$ of $\mathbb{Z}_{5}$ with $\varphi(S,R)=16$.

\medskip

\noindent {\em Claim $1$.} If $\langle S=\{x_1, x_2\} \, | \, R=\{r_1, r_2\}\rangle\in
\mathcal{P}_2(\mathbb{Z}_5)$, $\varphi(S, R) =16$ and  $r_3 \in \overline{R}$ is of minimum weight then $\{r_1,
r_2, r_3\} =\{(x_1^3x_2^{-1})^{\pm 1}, (x_2^2x_1^{-1})^{\pm 1}, (x_1^2x_2)^{\pm 1} \}$ or
$\{(x_1^2x_2^{-1}x_1x_2^{-1})^{\pm 1}, (x_1x_2)^{\pm 1}$, $(x_2^2x_1^{-1}x_2x_1^{-1})^{\pm 1}\}$. (So, the set
$\{r_1, r_2, r_3\}$ has 16 choices.)

\smallskip

Let $B$ be the set as in the proof of Lemma \ref{lemma:psi(G)}. Then $w\in F(S)$ and $4
\leq \lambda(w) \leq 6 $ imply $w$ is dependent with an element of $B$. Since $\Gamma$ has no $2$-cycle,  $R$ has no element of weight less than 4. Since
$\varphi(S,R)=16$, we can assume that $4 \leq \lambda(r_1), \lambda(r_2) \leq 6$. Since $\langle S \,| \, R
\rangle \in \mathcal{P}_2(\mathbb{Z}_5)\setminus \mathcal{P}_1(\mathbb{Z}_5)$, the only possible choices of
$\{r_1^{\pm 1}, r_2^{\pm 1}\}$ are $\{x_1^3x_2^{-1}, x_2^2x_1^{-1}\}$, $\{x_2^2x_1^{-1}, x_1^2x_2\}$,
$\{x_1^3x_2^{-1}, x_1^2x_2\}$, $\{x_1^2x_2^{-1}x_1x_2^{-1}, x_1x_2\}$ or $\{x_1^2x_2^{-1}x_1x_2^{-1},
x_2^2x_1^{-1}x_2x_1^{-1}\}$. So, if $\langle S \, | \, R \rangle \in \mathcal{P}_2(\mathbb{Z}_{5})$ and
$\varphi(S,R)=16$, then $(r_1^{\pm 1}, r_2^{\pm 1}, r_3^{\pm 1})$ $= (x_1^3x_2^{-1}$, $x_2^2x_1^{-1}$,
$x_1^2x_2)$ or $(x_1^2x_2^{-1}x_1x_2^{-1}$, $x_1x_2$, $x_2^2x_1^{-1}x_2x_1^{-1})$. This proves  Claim 1.

If $g_{ij}=2$ for some $i\neq j$ then $(\Gamma, \gamma)$ yields a presentation $\langle S \, | \, R\rangle \in
\mathcal{P}_1(\mathbb{Z}_5)$ such that $\varphi(S, R) =16$ (see Remark \ref{remark:R1}), which is not possible by
Eq. \eqref{eq:4p}. Thus, $g_{ij} \geq 3$. Since (by Proposition \ref{prop:gagliardi79a}) $g_{12}+g_{13}+g_{14}
=16/2+2=10$, we can assume that $g_{12}=3=g_{13}, g_{14}=4$. In particular, if we choose generators (resp.,
relations) corresponding to the components of $\Gamma_{12}$ (resp., $\Gamma_{34})$ then $(\Gamma, \gamma)$ yields
a presentation $\langle S \, | \, R\rangle\in \mathcal{P}_2(\mathbb{Z}_5) \setminus \mathcal{P}_1(\mathbb{Z}_5)$
with $\varphi(S, R) =16$.

\medskip

\noindent {\em Claim $2$.} If $x_1, x_2$ are generators corresponding to two components of $\Gamma_{12}$ then the
relations corresponding to the components of $\Gamma_{34}$ are $(x_1^3x_2^{-1})^{\varepsilon_1}$,
$x_2^2x_1^{-1}$, $(x_1^2x_2)^{\varepsilon_2}$ for some $\varepsilon_1, \varepsilon_2 \in \{1, -1\}$.

\smallskip

Let $S, R, r_1, r_2, r_3$ be as in Claim 1. Then by choosing $(i, j) =(3, 4)$ or $(4, 3)$ as in Eq.
\eqref{tildar}, by Claim 1, we can assume $(r_1, r_2, r_3) = \big((x_1^3x_2^{-1})^{\pm 1}$, $x_2^2x_1^{-1}$,
$(x_1^2x_2)^{\pm 1} \big)$ or $\big((x_1^2x_2^{-1}x_1x_2^{-1})^{\pm 1}$, $(x_1x_2)^{-1}$, $(x_2^2x_1^{-1}x_2
x_1^{-1})^{\pm 1}\big)$. In the first case, Claim 2 trivially holds. In the second case, $ \tilde{r}_2=
(x_1x_3^{-1}x_2x_3^{-1})^{-1}$, where $x_3$ corresponds to the third component of $\Gamma_{12}$ (see Eq.
\eqref{tildar}). By deleting $x_2$ and renaming $x_3$ by $x_2$  in $\tilde{r}_2^{\pm 1}$,  we get the new
relation $x_2^2x_1^{-1}$. Claim 2 now follows from Claim 1.

To construct $\tilde{r}_i$ as in Eq. \eqref{tildar}, we can choose, without loss, $(i, j)=(4,3)$. Since
$g_{23}=g_{14}=4$, $\Gamma_{14}$ and $\Gamma_{23}$ are of the form $C_4\sqcup C_4\sqcup C_4\sqcup C_4$. Again,
$g_{12}=g_{34}= g_{24}=g_{13}=3$ implies $\Gamma_{13}$, $\Gamma_{24}$, $\Gamma_{12}$ and $\Gamma_{34}$ are of the
form $C_4\sqcup C_6\sqcup C_6$. Let $G_1, G_2, G_3$ be the components  of $ \Gamma_{12} $ and $H_1, H_2, H_3$ be
the components of $ \Gamma_{34} $ such that $x_1, x_2, x_3$ represent the generators corresponding to $G_1, G_2,
G_3$ respectively and $(x_1^3x_2^{-1})^{\varepsilon_1}$, $x_2^2x_1^{-1}$, $(x_1^2x_2)^{\varepsilon_2}$ represent
the relations corresponding to $H_1, H_2, H_3$  respectively.

Let $G_1 = C_{6}(x^1, \dots, x^{6})$, $G_2 = C_4 (y^1, \dots, y^4)$ and $G_3 = C_{6}(z^1, \dots, z^{6})$. Then to
form the relations $(x_1^3x_2^{-1})^{\varepsilon_1}$, $x_2^2x_1^{-1}$, $(x_1^2x_2)^{\varepsilon_2}$, we need to
add the following: (i) two edges of color $4$  between $G_1$ and $G_2$, (ii) four edges of colors $4$ between
$G_1$ and $G_3$, (iii) two edges of color $4$  between $G_2$ and $G_3$. 
These give all the 8 edges in $\gamma^{-1}(4)$. Therefore, we must have the following: (a) two 4-cycles between $G_1$ and $G_3$
in $\Gamma_{14}$, (b) one 4-cycle between $G_1$ and
$G_2$ in $\Gamma_{14}$, (c) one 4-cycle between $G_2$ and $G_3$ in $\Gamma_{14}$. 
So, the 4-cycle in $\Gamma_{24}$ is in between $G_1$ and $G_3$. Thus, up to an isomorphism,  
$\gamma^{-1}(4)$ is unique. In particular, we can assume that $\Gamma_{124}$ is as in Fig. \ref{fig:K31} (b). Now, $y^2x^{5}$ is an edge of color $4$ between $G_1$ and $G_2$. Thus,
$y^1$ (resp., $y^2$) is in $H_1$ or $H_2$. Assume, without loss, $y^1 \in H_1$. Then $y^2 \in H_2$. Since
$\Gamma$ is bipartite and $H_2$ represents $x_2x_3^{-1}x_2x_1^{-1}$, taking $v_1=x^{5}$ as in Eq. \eqref{tildar},
$H_2 = C_4(x^{5}, y^2, z^{6}, y^4)$. Since $\Gamma_{23} = C_4\sqcup C_4\sqcup C_4\sqcup C_4$ and
$\Gamma_{13}=C_4\sqcup C_6\sqcup C_6$, we have $x^4y^1$, $x^3z^5$, $x^2z^4$, $x^1z^3$, $y^3z^1$, 
$x^6z^2 \in \gamma^{-1}(3)$. Then $(\Gamma, \gamma)= \mathcal{M}_{2,3}$ of Fig. \ref{fig:K31} (b). This completes the 
proof.
\end{proof}

\begin{lemma}\label{unique:J3}
There exists a unique $18$-vertex crystallization of $S^3/Q_8$.
\end{lemma}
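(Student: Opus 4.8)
The plan is to run the scheme used in Lemmas~\ref{unique:M23} and~\ref{unique:K31}. By Lemma~\ref{lemma:psi(G)}(iv) and Theorem~\ref{theorem:T1}, $S^{3}/Q_8$ has no crystallization with fewer than $\psi(Q_8)=18$ vertices, so let $(\Gamma,\gamma)$ be an $18$-vertex crystallization of $S^{3}/Q_8$. Since $Q_8$ is finite and nontrivial it is not a free product $\mathbb{Z}\ast H$, so by Lemma~\ref{lemma:no2cycle} a $2$-cycle in $\Gamma$ would yield a crystallization of $S^{3}/Q_8$ with $16$ vertices, which is impossible; hence $\Gamma$ is simple. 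As $S^{3}/Q_8$ is orientable, Proposition~\ref{prop:ca-ga-pe80} makes $\Gamma$ bipartite. Since $\mu(Q_8)=2$, the computation in the proof of Theorem~\ref{theorem:T1} gives $g_{ij}\ge\mu(Q_8)+1=3$, while simplicity together with $|V(\Gamma)|=18$ gives $g_{ij}\le\lfloor 18/4\rfloor=4$. Proposition~\ref{prop:gagliardi79a} then forces $g_{12}+g_{13}+g_{14}=11$ with $g_{12}=g_{34}$, $g_{13}=g_{24}$, $g_{14}=g_{23}$, so after relabelling the colours $g_{12}=g_{34}=3$ and $g_{13}=g_{14}=g_{23}=g_{24}=4$.

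Next I would pin down every $\Gamma_{ij}$. Choosing colours $1,2$, Proposition~\ref{prop:gagliardi79b} and Remark~\ref{remark:R1} give a presentation $\langle x_1,x_2\mid r_1,r_2\rangle\in\mathcal{P}_2(Q_8)\setminus\mathcal{P}_1(Q_8)$ with $\varphi(S,R)\le 18$; since $\psi(Q_8;2)=\psi(Q_8)=18$, this forces $\varphi(S,R)=18$. Let $r_3$ be the word read off the third component $H_3$ of $\Gamma_{12}$; then $r_3\in\overline{\{r_1,r_2\}}$ and $|H_i|\ge\lambda(r_i)$ (the Claim in the proof of Theorem~\ref{theorem:T1}), so
\[
18=|H_1|+|H_2|+|H_3|\ \ge\ \lambda(r_1)+\lambda(r_2)+\lambda(r_3)\ \ge\ \varphi(S,\{r_1,r_2\})=18,
\]
and every inequality is an equality. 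By the weight-$4$/weight-$6$ analysis in the proof of Lemma~\ref{lemma:psi(G)}(iv) this forces (up to cyclic permutation and inversion) $\{r_1,r_2,r_3\}=\{x_2^{2}x_1^{-2},\ x_1x_2x_1x_2^{-1},\ x_2x_1x_2x_1^{-1}\}$ in each of the three cases listed there, and $|H_1|=|H_2|=|H_3|=6$, i.e.\ $\Gamma_{12}=C_6\sqcup C_6\sqcup C_6$. Applying the same argument with colours $3,4$ gives $\Gamma_{34}=C_6\sqcup C_6\sqcup C_6$; and since each $\widetilde r_i$ then contains each of $x_1,x_2,x_3$ exactly twice, every component of $\Gamma_{34}$ has $6$ vertices. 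Finally, $g_{13}=g_{14}=g_{23}=g_{24}=4$ with simplicity and $18$ vertices force $\Gamma_{13},\Gamma_{14},\Gamma_{23},\Gamma_{24}$ each to be $C_4\sqcup C_4\sqcup C_4\sqcup C_6$.

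With this skeleton fixed, the reconstruction proceeds exactly as in Lemma~\ref{unique:M23}. Write $G_1,G_2,G_3$ for the $6$-cycles of $\Gamma_{34}$, carrying the generators $x_1,x_2,x_3$, and $H_1,H_2,H_3$ for the $6$-cycles of $\Gamma_{12}$, carrying $r_1,r_2,r_3$; reading $\widetilde r_i$ off $H_i$ records, for each vertex, the component of $\Gamma_{34}$ containing it. The colour-$4$ edges among $G_1,G_2,G_3$ are then constrained both by the requirement that they assemble $r_1,r_2,r_3$ and by $\Gamma_{14},\Gamma_{24}$ being $C_4^{\sqcup 3}\sqcup C_6$; arguing as in Lemma~\ref{unique:M23} — and using that $(\Gamma,\gamma)$ is contracted to discard every configuration in which some three-colour subgraph $\Gamma_{\{a,b,c\}}$ splits off a proper component — one shows $\gamma^{-1}(4)$ is unique up to isomorphism, after which $\gamma^{-1}(3)$ is forced by $\Gamma_{13},\Gamma_{23}$ being $C_4^{\sqcup 3}\sqcup C_6$ together with contractedness. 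Thus $(\Gamma,\gamma)$ is a single $4$-coloured graph; one checks via Proposition~\ref{prop:gagliardi79a} that it is a crystallization of a closed orientable $3$-manifold, via Proposition~\ref{prop:gagliardi79b} that its fundamental group is $Q_8$, and, since by geometrization the only closed orientable $3$-manifold with fundamental group $Q_8$ is $S^{3}/Q_8$, both existence and uniqueness follow.

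The weight bookkeeping is routine once Lemma~\ref{lemma:psi(G)}(iv) is quoted; the main obstacle is the third paragraph. There, $\{r_1,r_2\}$ can occur in three normal forms, each with a sign ambiguity, and for each there are several a priori placements of the colour-$3$ and colour-$4$ edges among $G_1,G_2,G_3$ consistent with the word data, so the case analysis ruling all but one of them out — through the contractedness constraints on the subgraphs $\Gamma_{\{a,b,c\}}$, exactly as in the $\mathbb{Z}_5$ and $\mathbb{Z}_3$ arguments — is where the real work lies.
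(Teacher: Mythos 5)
Your set-up reproduces the paper's proof step for step: minimality of $18$ via Lemma \ref{lemma:psi(G)}(iv) and Theorem \ref{theorem:T1}, exclusion of $2$-cycles via Lemma \ref{lemma:no2cycle} (your remark that $Q_8$ is not a free product $\mathbb{Z}\ast H$ is exactly the point the paper leaves implicit), bipartiteness, the count $g_{12}=g_{34}=3$ and $g_{13}=g_{14}=g_{23}=g_{24}=4$, the forced equality $\varphi(S,R)=18$ which by the weight analysis in the proof of Lemma \ref{lemma:psi(G)} pins $\{r_1^{\pm1},r_2^{\pm1},r_3^{\pm1}\}$ down to $\{x_2^2x_1^{-2},\,x_1x_2x_1x_2^{-1},\,x_2x_1x_2x_1^{-1}\}$, and hence the cycle structure of all six $\Gamma_{ij}$. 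All of this is correct and coincides with the paper.

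The gap is your third paragraph, which you yourself flag as ``where the real work lies'': the reconstruction of $\gamma^{-1}(4)$ and then $\gamma^{-1}(3)$ is asserted to proceed ``exactly as in Lemma \ref{unique:M23}'' but is not carried out, and it is not a formal repetition of that argument. In the $\mathbb{Z}_5$ case the colour-$4$ edges are distributed $2/4/2$ among the three pairs $\{G_i,G_j\}$, which forces the placement of the $4$-cycles of $\Gamma_{14}$ and the lone $4$-cycle of $\Gamma_{24}$ almost immediately; here every pair $\{G_i,G_j\}$ receives exactly three colour-$4$ edges, and one must first argue that each such triple closes up into two $4$-cycles (using that $\Gamma_{14}$ and $\Gamma_{24}$ each contain exactly three $4$-cycles) before $\Gamma_{124}$ is determined up to isomorphism. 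Even after that, the paper still needs a genuine elimination: the starting vertex of the $6$-cycle carrying $x_2x_1x_2x_1^{-1}$ is $a_2$ or $a_3$, the case $a_2$ splits into two sub-cases each killed by a clash between a forced colour-$3$ edge and an already-placed colour-$4$ edge, and only then are the remaining colour-$3$ edges forced. Since the lemma's entire content is that this case analysis terminates in a single graph, the proposal as written establishes the correct skeleton and all the constraints, but not the uniqueness claim itself.
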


\begin{proof}
Let $(\Gamma, \gamma)$ be an $18$-vertex crystallization of $S^3/Q_8$.  By Lemma \ref{lemma:psi(G)} and  Theorem \ref{theorem:T1}, $(\Gamma, \gamma)$ is the crystallization 
of $S^3/Q_8$ with
minimum number of vertices. So, by Lemma \ref{lemma:no2cycle}, $(\Gamma, \gamma)$ has no 2-cycle. Thus, $\Gamma$ is a simple graph. Since $S^3/Q_8$ is orientable, $\Gamma$ is bipartite.
By Proposition
\ref{prop:gagliardi79b} and Remark \ref{remark:R1}, $(\Gamma, \gamma)$ yields a presentation $\langle S\, | \,
R\rangle$ of $Q_8$ with $\varphi(S,R)=18$.
Again, $(\Gamma,\gamma)$ has no 2-cycle implies $g_{ij} \leq 4$ for $i\neq j$. By
Proposition \ref{prop:gagliardi79a}, $g_{12}+g_{13}+g_{14}=18/2+2=11$. Assume, without loss, that $g_{12}=3$ and
$g_{13}=g_{14}=4$. Therefore, if we choose generators (resp., relations) correspond to the components of
$\Gamma_{12}$ (resp., $\Gamma_{34})$ then $(\Gamma, \gamma)$ yields a presentation $\langle S \, | \, R\rangle\in
\mathcal{P}_2(Q_8) \setminus \mathcal{P}_1(Q_8)$ with $\varphi(S, R) =18$. Then by the proof of part (v) in Lemma
\ref{lemma:psi(G)}, $R = \{(x_2^2x_1^{-2})^{\varepsilon_1}, (x_1x_2x_1x_2^{-1})^{\varepsilon_2},
(x_2x_1x_2x_1^{-1})^{\varepsilon_3}\}$ for some $\varepsilon_1, \varepsilon_2, \varepsilon_3 \in \{1, -1\}$.
Then, by choosing $(i, j) =(3, 4)$ or $(4, 3)$ as in Eq. \eqref{tildar}, we can assume that the three relations
correspond to components of $\Gamma_{34}$ are $(x_2^2x_1^{-2})^{\varepsilon_1},
(x_1x_2x_1x_2^{-1})^{\varepsilon_2}, x_2x_1x_2x_1^{-1}$, for some $\varepsilon_1, \varepsilon_2 \in \{-1, 1\}$.
 Since $\Gamma$ has no 2-cycle, $\Gamma_{ij} = C_4\sqcup C_4\sqcup
C_4\sqcup C_6$ for $1\leq i\leq 2$ and $3\leq j \leq 4$. Let $G_1, G_2, G_3$ be the components  of $ \Gamma_{12}
$ and $H_1, H_2, H_3$ be the components of $ \Gamma_{34} $ such that $x_1, x_2, x_3$ represent the generators
corresponding to $G_1, G_2, G_3$ and $(x_2^2x_1^{-2})^{\varepsilon_1}, x_2x_1x_2x_1^{-1},
(x_1x_2x_1x_2^{-1})^{\varepsilon_2}$ represent the relations corresponding to $H_1, H_2, H_3$ respectively. Then $G_i$, $H_i$
are 6-cycles for $1\leq i\leq 3$. Let $G_1 = C_6(a_1, \dots, a_6)$, $G_2 = C_6(b_1, \dots, b_6)$, $G_3 = C_6(c_1,
\dots, c_6)$.  Again, to form these relations, there are exactly three edges with color $4$ between $G_i$ and
$G_j$ for $i \neq j$. Since each of $\Gamma_{14}$ and $\Gamma_{24}$ has three 4-cycles, the three
edges with color $4$ between $G_i$ and $G_j$ for $i \neq j$, yield two $4$-cycles. Then, up to an
isomorphism, $\Gamma_{124}$ is as in Fig. \ref{fig:J3}. Same arguments hold for the color $3$.


\begin{figure}[ht]
\tikzstyle{vert}=[circle, draw, fill=black!100, inner sep=0pt, minimum width=4pt]
\tikzstyle{vertex}=[circle, draw, fill=black!00, inner sep=0pt, minimum width=4pt]
\tikzstyle{ver}=[]
\tikzstyle{extra}=[circle, draw, fill=black!50, inner sep=0pt, minimum width=2pt]
\tikzstyle{edge} = [draw,thick,-]
\centering
\begin{tikzpicture}[scale=0.3]
\begin{scope}[shift={(16,2)}, rotate=180]
\foreach \x/\y in {1/4,1/3,1/2,1/1}{
\node[ver] (d\y) at (\x,\y){};}
\foreach \x/\y in {4.5/4,4.5/3,4.5/2,4.5/1}{
\node[ver] (c\y) at (\x,\y){$\y$};}
\foreach \x/\y in {c1/d1,c2/d2}{
\path[edge] (\x) -- (\y);}
\path[edge, dashed] (c4) -- (d4);
\path[edge, dotted] (c3) -- (d3);
\draw [line width=3pt, line cap=round, dash pattern=on 0pt off 2\pgflinewidth]  (c1) -- (d1);
\end{scope}
\begin{scope}[]
\foreach \x/\y in {0/$c_1$,120/$c_3$,240/$c_5$}{
\node[ver] (\y) at (\x:2){\y};
\node[vertex] (\y) at (\x:3){};
}
\foreach \x/\y in {60/$c_2$,180/$c_4$,300/$c_6$}{
\node[ver] (\y) at (\x:2){\y};
\node[vert] (\y) at (\x:3){};
}
\node[ver] (25) at (0:0){$G_3$};
\end{scope}
\begin{scope}[shift={(7,7)}]
\foreach \x/\y in {0/$b_1$,120/$b_3$,240/$b_5$}{
\node[ver] (\y) at (\x:2){\y};
\node[vertex] (\y) at (\x:3){};
}
\foreach \x/\y in {60/$b_2$,180/$b_4$,300/$b_6$}{
\node[ver] (\y) at (\x:2){\y};
\node[vert] (\y) at (\x:3){};
}
\node[ver] (25) at (0:0){$G_2$};
\end{scope}
\begin{scope}[shift={(-7,7)}]
\foreach \x/\y in {0/$a_1$,120/$a_3$,240/$a_5$}{
\node[ver] (\y) at (\x:2){\y};
\node[vertex] (\y) at (\x:3){};
}
\foreach \x/\y in {60/$a_2$,180/$a_4$,300/$a_6$}{
\node[ver] (\y) at (\x:2){\y};
\node[vert] (\y) at (\x:3){};
}
\node[ver] (25) at (0:0){$G_1$};
\end{scope}

\foreach \x/\y in
{$a_1$/$a_2$,$a_2$/$a_3$,$a_3$/$a_4$,$a_4$/$a_5$,$a_5$/$a_6$,$a_6$/$a_1$,$b_1$/$b_2$,$b_2$/$b_3$,$b_3$/$b_4$,
$b_4$/$b_5$,$b_5$/$b_6$,$b_6$/$b_1$,$c_1$/$c_2$,$c_2$/$c_3$,$c_3$/$c_4$,$c_4$/$c_5$,$c_5$/$c_6$,$c_6$/$c_1$}{
\path[edge] (\x) -- (\y); } \foreach \x/\y in
{$a_1$/$a_2$,$a_3$/$a_4$,$a_5$/$a_6$,$b_1$/$b_2$,$b_3$/$b_4$,$b_5$/$b_6$,$c_1$/$c_2$, $c_3$/$c_4$,$c_5$/$c_6$}{
    \draw [line width=3pt, line cap=round, dash pattern=on 0pt off 2\pgflinewidth]  (\x) -- (\y);
}
\foreach \x/\y in {$a_1$/$b_4$,$b_5$/$c_2$,$a_6$/$c_3$,$a_5$/$c_4$,$b_6$/$c_1$,$a_2$/$b_3$}{
    \path[edge, dashed] (\x) -- (\y);
}

\draw[edge, dashed] plot[smooth,tension=1.5] coordinates{({$c_6$})(8,0)({$b_1$})};
\draw[edge, dashed] plot [smooth,tension=1.5] coordinates{({$a_4$})(-8,0)({$c_5$})};
\draw[edge, dashed] plot [smooth,tension=1.5] coordinates{({$a_3$})(0,11)({$b_2$})};
\draw[edge, dotted] plot [smooth,tension=1] coordinates{({$a_6$})(-6,-4)(3,-3)({$c_1$})};
\draw[edge, dotted] plot [smooth,tension=1] coordinates{({$a_2$})(-2.5,5)(-4.5,0)({$c_5$})};
\draw[edge, dotted] plot [smooth,tension=1] coordinates{({$a_1$})(-5,-3)({$c_6$})};
\draw[edge, dotted] plot [smooth,tension=1] coordinates{({$c_4$})(-3,2)(1,5)(7,2)({$b_1$})};
\draw[edge, dotted] plot [smooth,tension=1] coordinates{({$c_3$})(1,4)(8,1)(11,6)({$b_2$})};
\draw[edge, dotted] plot [smooth,tension=1] coordinates{({$c_2$})(0,5)({$b_3$})};
\draw[edge, dotted] plot [smooth,tension=1] coordinates{({$b_4$})(0,6)(-6,2)({$a_5$})};
\draw[edge, dotted] plot [smooth,tension=1] coordinates{({$a_3$})(-1,9)(5,3)({$b_6$})};
\draw[edge, dotted] plot [smooth,tension=1] coordinates{({$a_4$})(-8,11)(-1,10)({$b_5$})};
\end{tikzpicture}
\vspace{-5mm} \caption{Crystallization $\mathcal{J}_3$ of $S^3/\mathbb{Q}_8$}\label{fig:J3}
\end{figure}
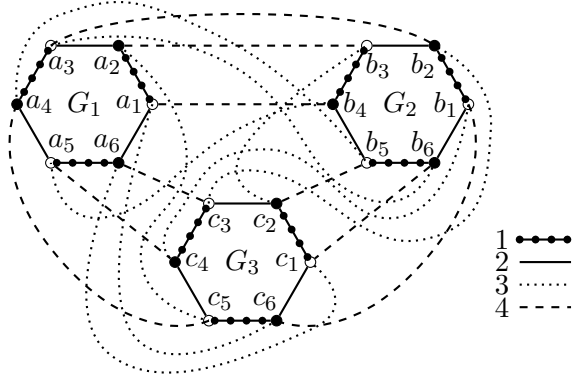


To construct $\tilde{r}_k$ as in Eq. \eqref{tildar}, choose $(i, j) = (4, 3)$. Since $H_2$ presents the relation
$x_2x_1x_2x_1^{-1}$, up to isomorphism, the starting vertex $v_1$ (as in Eq. \eqref{tildar}) is $a_2$ or $a_3$.
If $v_1=a_2$ then $H_2 = C_6(a_2, b_3, c_4, a_5, c_2, b_5)$ or $C_6(a_2, b_3, c_4, a_5, c_6, b_1)$. 
In the first
case, if $b_4c_3 \in \gamma^{-1}(3)$, then $b_4c_3$ lies in a cycle of size at least 8 in $\Gamma_{23}$,
which is not possible. Then the 4-cycle in $\Gamma_{13}$ between $G_2$ and $G_3$ must be $C_4(b_1, b_2, 
c_5, c_6)$. But this is not possible since $b_1c_6 \in \gamma^{-1}(4)$.  
In the second case, if $b_2c_5 \in \gamma^{-1}(3)$, then $b_2c_5$ lies in a cycle of size at least 8 
in $\Gamma_{13}$, which is not possible. Then the 4-cycle in $\Gamma_{23}$ between $G_2$ and $G_3$ 
must be $C_4(b_4, b_5, c_2, c_3)$. Ahain, this is not possible since $b_5c_2\in \gamma^{-1}(4)$. Thus, $v_1=a_3$. Now, if $b_2c_5$ is an edge of color $3$ then $a_4c_1$ and $a_3b_6$ must be edges of color $3$. Then
$b_5c_2$ must be an edge of color $3$ to make a 6-cycle in $\Gamma_{13}$, which is a contradiction (since
$b_5c_2$ is already  an edge of color $4$). Thus, $H_2=C_6(a_3, b_2, c_3, a_6, c_1, b_6)$. Since the three edges
with color $3$ between $G_2$ and $G_3$ yield two $4$-cycles (in $\Gamma_{13}$ and $\Gamma_{23}$), $b_1c_4$, $b_3c_2$ must be edges of color $3$
between $G_2$ and $G_3$. To make a 6-cycle in $\Gamma_{13}$, $a_5b_4$ must be an edge of color $3$. 
By similar arguments, $a_1c_6, a_2c_5, a_4b_5\in \gamma^{-1}(3)$. Then, $(\Gamma, \gamma) =
\mathcal{J}_3$ of Fig. \ref{fig:J3}.

Now, the components $H_1=C_6(a_2, b_3, c_2, b_5, a_4, c_5)$ and $H_3=C_6(b_4, a_1, c_6, b_1, c_4, a_5)$ of
$\Gamma_{34}$ yield the relations $x_2^2x_1^{-2}$ and $x_1x_2x_1x_2^{-1}$ respectively. Thus $(\Gamma, \gamma)$
yields the presentation $\langle x_1, x_2 \, | \, x_2^2x_1^{-2}, x_1x_2x_1x_2^{-1}\rangle \cong Q_8$. This completes the proof.
\end{proof}


\vspace{-10mm}

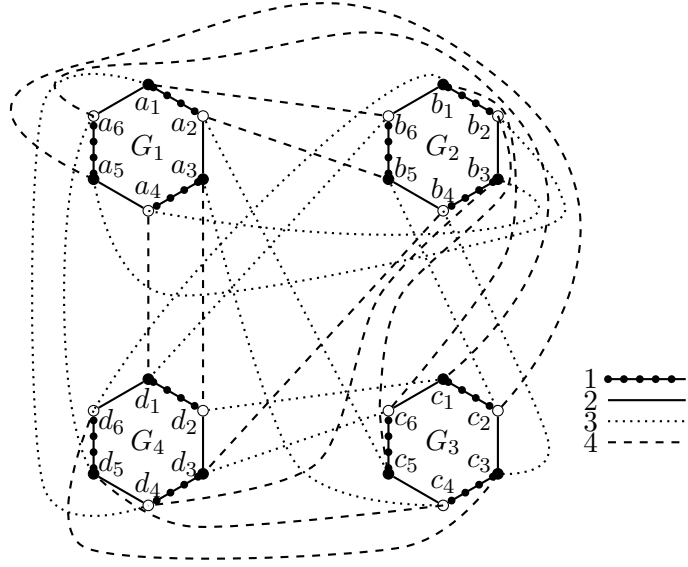
\begin{figure}[ht]
\tikzstyle{vert}=[circle, draw, fill=black!100, inner sep=0pt, minimum width=4pt] \tikzstyle{vertex}=[circle,
draw, fill=black!00, inner sep=0pt, minimum width=4pt] \tikzstyle{ver}=[] \tikzstyle{extra}=[circle, draw,
fill=black!50, inner sep=0pt, minimum width=2pt] \tikzstyle{edge} = [draw,thick,-] \centering
\begin{tikzpicture}[scale=0.28]
\begin{scope}[shift={(20,-3)}, rotate=180]
\foreach \x/\y in {1/4,1/3,1/2,1/1}{ \node[ver] (d\y) at (\x,\y){};} \foreach \x/\y in {6/4,6/3,6/2,6/1}{
\node[ver] (c\y) at (\x,\y){\y};} \foreach \x/\y in {c1/d1,c2/d2}{ \path[edge] (\x) -- (\y);} \path[edge, dashed]
(c4) -- (d4); \path[edge, dotted] (c3) -- (d3); \draw [line width=3pt, line cap=round, dash pattern=on 0pt off
2\pgflinewidth]  (c1) -- (d1);
\end{scope}
\begin{scope}[rotate=-90, shift={(-7,7)}]
\foreach \x/\y in {0/$b_4$,120/$b_2$,240/$b_6$}{ \node[ver] (\y) at (\x:2){\y}; \node[vertex] (\y) at (\x:3){};

} \foreach \x/\y in {60/$b_3$,180/$b_1$,300/$b_5$}{ \node[ver] (\y) at (\x:2){\y}; \node[vert] (\y) at (\x:3){};
} \node[ver] (25) at (0:0){$G_2$};
\end{scope}
\begin{scope}[rotate=-90, shift={(7,7)}]
\foreach \x/\y in {0/$c_4$,120/$c_2$,240/$c_6$}{ \node[ver] (\y) at (\x:2){\y}; \node[vertex] (\y) at (\x:3){}; }
\foreach \x/\y in {60/$c_3$,180/$c_1$,300/$c_5$}{ \node[ver] (\y) at (\x:2){\y}; \node[vert] (\y) at (\x:3){}; }
\node[ver] (25) at (0:0){$G_3$};
\end{scope}
\begin{scope}[rotate=-90, shift={(-7,-7)}]
\foreach \x/\y in {0/$a_4$,120/$a_2$,240/$a_6$}{ \node[ver] (\y) at (\x:2){\y}; \node[vertex] (\y) at (\x:3){}; }
\foreach \x/\y in {60/$a_3$,180/$a_1$,300/$a_5$}{ \node[ver] (\y) at (\x:2){\y}; \node[vert] (\y) at (\x:3){}; }
\node[ver] (25) at (0:0){$G_1$};
\end{scope}

\begin{scope}[rotate=-90, shift={(7,-7)}]
\foreach \x/\y in {0/$d_4$,120/$d_2$,240/$d_6$}{ \node[ver] (\y) at (\x:2){\y}; \node[vertex] (\y) at (\x:3){}; }
\foreach \x/\y in {60/$d_3$,180/$d_1$,300/$d_5$}{ \node[ver] (\y) at (\x:2){\y}; \node[vert] (\y) at (\x:3){}; }
\node[ver] (25) at (0:0){$G_4$};
\end{scope}

\foreach \x/\y in
{$a_1$/$a_2$,$a_2$/$a_3$,$a_3$/$a_4$,$a_4$/$a_5$,$a_5$/$a_6$,$a_6$/$a_1$,$b_1$/$b_2$,$b_2$/$b_3$,$b_3$/$b_4$,
$b_4$/$b_5$,$b_5$/$b_6$,$b_6$/$b_1$,$c_1$/$c_2$,$c_2$/$c_3$,$c_3$/$c_4$,$c_4$/$c_5$,$c_5$/$c_6$,$c_6$/$c_1$,
$d_1$/$d_2$,$d_2$/$d_3$,$d_3$/$d_4$,$d_4$/$d_5$,$d_5$/$d_6$,$d_6$/$d_1$}{ \path[edge] (\x) -- (\y); } \foreach
\x/\y in
{$a_1$/$a_2$,$a_3$/$a_4$,$a_5$/$a_6$,$b_1$/$b_2$,$b_3$/$b_4$,$b_5$/$b_6$,$c_1$/$c_2$,$c_3$/$c_4$,$c_5$/$c_6$,
$d_1$/$d_2$,$d_3$/$d_4$,$d_5$/$d_6$}{
    \draw [line width=3pt, line cap=round, dash pattern=on 0pt off 2\pgflinewidth]  (\x) -- (\y);
} \foreach \x/\y in {$a_1$/$b_6$,$a_2$/$b_5$,$a_3$/$d_2$,$a_4$/$d_1$,$b_4$/$d_3$}{
    \path[edge, dashed] (\x) -- (\y);
}

\draw[edge, dashed] plot [smooth,tension=1.5] coordinates{({$a_5$}) (-10,11) (10,10) ({$c_2$})}; \draw[edge,
dashed] plot [smooth,tension=1.5] coordinates{({$a_6$}) (-7,11) (10,9) ({$c_1$})}; \draw[edge, dashed] plot
[smooth,tension=0.5] coordinates{({$b_2$}) (10,5)(5,0) (4,-5) ({$c_5$})}; \draw[edge, dashed] plot
[smooth,tension=0.5] coordinates{({$b_1$}) (10,8.5) (10,2)  ({$c_6$})}; \draw[edge, dashed] plot
[smooth,tension=0.5] coordinates{({$b_3$})(4,0)  (1,-8) ({$d_4$})}; \draw[edge,dashed] plot [smooth,tension=0.5]
coordinates{({$c_4$})(-5,-11) ({$d_5$})}; \draw[edge, dashed] plot [smooth,tension=0.5] coordinates{({$c_3$})
(5,-12) (-10,-12) ({$d_6$})}; \foreach \x/\y in {$a_2$/$c_5$,$b_6$/$d_1$,$b_5$/$c_2$,$c_6$/$d_3$,$c_1$/$d_2$}{
    \path[edge, dotted] (\x) -- (\y);}
\draw[edge, dotted] plot [smooth,tension=1.5] coordinates{({$a_6$}) (-11,0) ({$d_5$})}; \draw[edge, dotted] plot
[smooth,tension=0.5] coordinates{({$a_1$}) (-12,9)(-12,-9) ({$d_4$})}; \draw[edge, dotted] plot
[smooth,tension=1.5] coordinates{({$b_1$}) (0,6) ({$d_6$})}; \draw[edge, dotted] plot [smooth,tension=0.5]
coordinates{({$a_3$}) (0,-8) ({$c_4$})}; \draw[edge, dotted] plot [smooth,tension=0.5] coordinates{({$b_4$})
(12,-7) ({$c_3$})}; \draw[edge, dotted] plot [smooth,tension=1.5] coordinates{({$a_4$}) (8,3) ({$b_3$})};
\draw[edge, dotted] plot [smooth,tension=0.5] coordinates{({$a_5$}) (-6,0) (11,3) ( 12.5,5) ({$b_2$})};
\end{tikzpicture}
\caption{Crystallization $\mathcal{J}_4$ of $S^1\times S^1\times S^1$}\label{fig:J4}
\end{figure}

\vspace{-4mm}


\begin{lemma} \label{unique:J4}
There exists a unique $24$-vertex crystallization of $S^1\times S^1\times S^1$.
\end{lemma}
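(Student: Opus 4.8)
The plan is to follow the template of Lemmas~\ref{unique:J12}--\ref{unique:J3}: first pin down the parameters $g_{ij}$ and the cycle structure of the graphs $\Gamma_{ij}$ by a weight count against $\psi(\mathbb{Z}^3)=24$, and then reconstruct $(\Gamma,\gamma)$ up to isomorphism using contractedness.

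\emph{Reductions.} By Lemma~\ref{lemma:psi(G)}(v), $\psi(\mathbb{Z}^3)=24$, so by Lemma~\ref{lemma:psi(M)} and Theorem~\ref{theorem:T1}, $S^1\times S^1\times S^1$ has no crystallization with fewer than $24$ vertices; hence a $24$-vertex crystallization $(\Gamma,\gamma)$ is facet minimal. Since $\mathbb{Z}^3$ is infinite and abelian it is not a nontrivial free product, in particular not of the form $\mathbb{Z}\ast H$, so Lemma~\ref{lemma:no2cycle} forces $\Gamma$ to have no $2$-cycle and hence to be simple; and since $S^1\times S^1\times S^1$ is orientable, $\Gamma$ is bipartite by Proposition~\ref{prop:ca-ga-pe80}. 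Simplicity makes each $\Gamma_{ij}$ a disjoint union of even cycles of length $\ge 4$ covering the $24$ vertices, so $g_{ij}\le 6$; and as $\pi_1$ has a presentation on $g_{ij}-1$ generators (Proposition~\ref{prop:gagliardi79b}) while $m(\mathbb{Z}^3)=3$, also $g_{ij}\ge 4$. With $g_{12}+g_{13}+g_{14}=24/2+2=14$ and $g_{ij}=g_{kl}$ (Proposition~\ref{prop:gagliardi79a}), the multiset $\{g_{12},g_{13},g_{14}\}$ is $\{4,4,6\}$ or $\{4,5,5\}$, and in either case some pair of colours, say $\{1,2\}$, has $g_{12}=g_{34}=4$.

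\emph{The weight count.} For the colours $\{1,2\}$, Proposition~\ref{prop:gagliardi79b} gives a presentation $\langle S\mid R\rangle\in\mathcal{P}_3(\mathbb{Z}^3)$ with three generators (attached to the components of $\Gamma_{34}$) and three relations $r_1,r_2,r_3$ (read off the components of $\Gamma_{12}$), and $\varphi(S,R)\le 24$ by Remark~\ref{remark:R1}. Here $N(R)$ is the relator subgroup of $\mathbb{Z}^3$ in $F(S)$, and by the argument in the proof of Lemma~\ref{lemma:psi(G)}(v) every nonidentity element of it has weight $\ge 6$; since $\varphi(S,R)=\lambda(r_1)+\lambda(r_2)+\lambda(r_3)+\lambda(r_4)$ with $\#(S)=\#(R)=3$ and $r_4\in\overline{R}$ of least weight, all of $r_1,r_2,r_3,r_4$ have weight exactly $6$ and $\varphi(S,R)=24$. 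By the Claim in the proof of Theorem~\ref{theorem:T1}, the component of $\Gamma_{12}$ yielding $r_i$ has length $\ge\lambda(r_i)=6$, so these four components, covering all $24$ vertices, are hexagons; symmetrically the four components of $\Gamma_{34}$ are hexagons. Running the same count on any further pair with $g_{ij}=4$ shows the corresponding $\Gamma_{ij},\Gamma_{kl}$ also split into four hexagons, while for a pair with $g_{ij}=5$ one expects (by a companion weight estimate, or else via the reconstruction below) that no $\mathcal{P}_4(\mathbb{Z}^3)$ presentation with $\varphi\le 24$ can arise, which rules out the $\{4,5,5\}$ branch. Thus $\{g_{12},g_{13},g_{14}\}=\{4,4,6\}$, and after relabelling we may take $g_{12}=g_{13}=g_{24}=g_{34}=4$ and $g_{14}=g_{23}=6$; then $\Gamma_{12},\Gamma_{13},\Gamma_{24},\Gamma_{34}$ are each a union of four hexagons and $\Gamma_{14},\Gamma_{23}$ are each a union of six squares.

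\emph{Reconstruction and conclusion.} As in Claim~$1$ of the proof of Lemma~\ref{unique:M23}, the three weight-$6$ relations coming from $\Gamma_{12}$ must, after renaming the generators and allowing inversion, cyclic rotation, and conjugation, be one of a short explicit list of triples generating $N(R)$; fixing such a normal form prescribes, for each hexagon $G_i$ of $\Gamma_{12}$, the cyclic sequence of $\Gamma_{34}$-hexagons in which its six vertices lie, and hence exactly how the twelve edges of colours $3$ and $4$ join the $G_i$. Demanding in addition that $\Gamma_{14}$ and $\Gamma_{23}$ be unions of squares and that $\Gamma$ be contracted --- i.e.\ that no $\Gamma_{\{i,j,k\}}$ have a proper component, the device used to discard stray colour choices in Lemmas~\ref{unique:J12}--\ref{unique:J3} --- removes the remaining freedom and identifies $(\Gamma,\gamma)$, up to colour- and vertex-relabelling, with $\mathcal{J}_4$ of Fig.~\ref{fig:J4}. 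It then remains to verify directly that $\mathcal{J}_4$ satisfies Proposition~\ref{prop:gagliardi79a} and, reading its relations off the hexagons of its $\Gamma_{34}$ via \eqref{tildar}, that it presents $\mathbb{Z}^3$; hence $\mathcal{J}_4$ is a crystallization of $S^1\times S^1\times S^1$, and it is the unique $24$-vertex one. The reconstruction is where the work lies: the relevant presentation has three generators and three relations, so far more colour assignments must be examined than in the earlier cases, and one must simultaneously track the hexagon structure of four of the $\Gamma_{ij}$, the square structure of the other two, and contractedness; pinning down the normal form of the relation triple --- and with it excluding the $\{4,5,5\}$ branch cleanly --- is the most delicate point.
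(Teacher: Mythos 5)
Your overall strategy coincides with the paper's, and the reductions you set up (facet minimality from Lemma \ref{lemma:psi(G)} and Theorem \ref{theorem:T1}; no $2$-cycles via Lemma \ref{lemma:no2cycle} because $\mathbb{Z}^3$ is not of the form $\mathbb{Z}\ast H$; bipartiteness; $4\le g_{ij}\le 6$ with $g_{12}+g_{13}+g_{14}=14$; the conclusion that for a colour pair with $g_{ij}=4$ all four relator weights equal $6$ and hence the corresponding components are hexagons) are all correct and match the paper. But the two steps that carry the real content are not proved. The first is the exclusion of the $\{4,5,5\}$ branch. You write that "one expects" that no presentation in $\mathcal{P}_4(\mathbb{Z}^3)\setminus\mathcal{P}_3(\mathbb{Z}^3)$ with $\varphi(S,R)=24$ can arise; this is precisely the Claim that occupies the first half of the paper's proof, and it is not automatic. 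With $\#(S)=\#(R)=4$ one has $\varphi(S,R)=\sum_{i=1}^{5}\lambda(r_i)\le 24$ with each $\lambda(r_i)\ge 4$, so at least three relators have weight exactly $4$; the paper then takes such a relator $w=(w_1x_4^{-1})^{\pm 1}$, eliminates $x_4$, and shows the resulting three-generator relator set $R_1$ would need two independent elements lying in the intersection of $A(w)$ with the explicit finite set $D$ of elements of $N(R_0)$ of weight at most $8$ --- and that intersection contains no two such elements. Nothing in your weight count substitutes for this argument; without it you cannot conclude $g_{ij}\ne 5$, and the configuration $g_{12}=4$, $g_{13}=g_{14}=5$ (in which $\Gamma_{14}$ and $\Gamma_{23}$ are not unions of squares) remains open.

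The second gap is the reconstruction itself. You assert that fixing a normal form for the relation triple, together with the square structure of $\Gamma_{14},\Gamma_{23}$ and contractedness, "removes the remaining freedom and identifies $(\Gamma,\gamma)$ with $\mathcal{J}_4$" --- but that is the statement to be proved, not a proof. The paper carries this out: it normalizes $R=\{x_1x_2x_1^{-1}x_2^{-1}, (x_2x_3x_2^{-1}x_3^{-1})^{\varepsilon_1}, (x_1x_3x_1^{-1}x_3^{-1})^{\varepsilon_2}\}$ using the weight-$6$ element list from the proof of part (v) of Lemma \ref{lemma:psi(G)} (not Claim $1$ of Lemma \ref{unique:M23}, which is specific to $\mathbb{Z}_5$), shows there are exactly two edges of each of colours $3$ and $4$ between each pair of hexagons of $\Gamma_{12}$, pins down $\Gamma_{124}$ up to isomorphism as in Fig. \ref{fig:J4}, and then forces the colour-$3$ edges one at a time from the requirements that $\Gamma_{23}$ consist of $4$-cycles and $\Gamma_{13}$ of $6$-cycles, before verifying via Eq. \eqref{tildar} that $\mathcal{J}_4$ presents $\mathbb{Z}^3$. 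You correctly identify this as "where the work lies," but flagging the hard part is not the same as doing it; as written the proposal establishes the structural constraints and then asserts, rather than demonstrates, both the branch elimination and the uniqueness.
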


\begin{proof}
Let $(\Gamma, \gamma)$ be a $24$-vertex crystallization of $(S^1)^3$.  By Lemma \ref{lemma:psi(G)} and   Theorem \ref{theorem:T1}, $(\Gamma, \gamma)$ is the crystallization 
of $(S^1)^3$ with
minimum number of vertices. So, by Lemma \ref{lemma:no2cycle}, $(\Gamma, \gamma)$ has no 2-cycle. Thus, $\Gamma$ is a simple graph. Since $(S^1)^3$ is orientable, $\Gamma$ is bipartite.
By Proposition
\ref{prop:gagliardi79b} and Remark \ref{remark:R1}, $(\Gamma, \gamma)$ yields a presentation $\langle S\, | \,
R\rangle$ of $\mathbb{Z}^3$ with $\varphi(S,R)=24$. Since any presentation of $\mathbb{Z}^3$ has at least three generators, $g_{ij} \geq 4$ for $i \neq j$. 
By Proposition \ref{prop:gagliardi79a}, $g_{12}+g_{13}+g_{14}=14$ and $g_{ij}=g_{kl}$ for $i, j, k, l$
distinct. 

\medskip

\noindent {\em Claim.} $(\Gamma, \gamma)$ does not yield a presentation $\langle S \, | \,R  \rangle \in
\mathcal{P}_4(\mathbb{Z}^3) \setminus \mathcal{P}_3(\mathbb{Z}^3)$ with $\varphi(S,R)=24$.

\smallskip

Assume $\langle S \, | \,R  \rangle \in \mathcal{P}_4(\mathbb{Z}^3) \setminus \mathcal{P}_3(\mathbb{Z}^3)$, where
$S=\{x_1, x_2, x_3, x_4\}$. Then $A:=\{(x_k^2x_l^{-1})^{\pm 1}$, $(x_jx_k^{-1}x_jx_l^{-1})^{\pm 1},
x_ix_j^{-1}x_kx_l^{-1}, (x_jx_kx_l^{-1})^{\pm 1}, (x_kx_l^{-1})^{2}, x_l^{\pm 2}$ $: \, i, j, k, l \mbox{ are
distinct}\}$ is the set of all relations of weight four. Since $\Gamma$ has no 2-cycle, $R$ has no
element of weight two. This implies that $R$ has at least three elements of weights four. Since $\mathbb{Z}^3$
has no torsion element, $x_l^{\pm 2}, (x_kx_l^{-1})^{2} \not\in R$. Consider an element $w\in R\cap A$. Assume,
without loss, $w = (w_1x_4^{-1})^{\pm 1}$. Then $\langle S_1 \, | \,R_1  \rangle \in
\mathcal{P}_3(\mathbb{Z}^3)$, where $S_1=\{x_1, x_2, x_3\}$ and $R_1$ consists of the elements $\bar{r}$, where
$\bar{r}$ can be obtained from a relation $r \in R$ by replacing $x_4$ by $w_1$. Let $A(w) := \{\bar{r} \, : \, r
\in A \setminus \{w^{\pm 1}\}\}$. Observe that the weights of the elements in $A(w)$ are 6 or 8.

Since $\langle S_1 \, | \,R_1  \rangle \in \mathcal{P}_3(\mathbb{Z}^3) \setminus \mathcal{P}_2(\mathbb{Z}^3)$, we
have $N(R_1) = N(R_0)$, where $R_0 = \{x_1x_2x_1^{-1}x_2^{-1}$, $x_1x_3x_1^{-1}x_3^{-1}$,
$x_2x_3x_2^{-1}x_3^{-1}\}$ and hence $N(R_1)$ has no element of weight less than 6 (see the proof of part (v) of
Lemma \ref{lemma:psi(G)}). Again, since $\#(R_1 \cap A(w)) \geq 2$, $R_1$ has at least two elements of weights 6
or 8. Observe that $D:=\{x_ix_jx_i^{-1}x_j^{-1}$, $x_ix_j^{-1}x_kx_i^{-1}x_jx_k^{-1}$,
$x_ix_jx_k^{-1}x_i^{-1}x_kx_j^{-1}$, $x_ix_jx_i^{-1}x_kx_j^{-1}x_k^{-1} \, : \{i,j,k\}=\{1,2,3\}\}$ is the set of
all relations of weights at most 8 in $N(R_0)$. So, $R_1$ has at least two independent relations from $D\cap
A(w)$. But $D\cap A(w)$ does not contain two such elements, a contradiction. This proves the claim.

By the claim, $g_{ij} \neq 5$ for all $1\leq i \neq j \leq 4$. So, we can assume that $g_{12}=g_{13}=4$ and
$g_{14}=6$. Then all the components of $\Gamma_{14}$ and $\Gamma_{23}$ are $4$-cycles. Let $\Gamma_{12} = G_1
\sqcup\cdots\sqcup G_4$ and $\Gamma_{34} = H_1\sqcup\cdots\sqcup H_4$ such that $x_1, \dots, x_4$ represent the
generators corresponding to $G_1, \dots, G_4$ respectively and $r_1, \dots, r_4$ represent the relations
corresponding to $H_1, \dots, H_4$ respectively. To construct $\tilde{r}_k$ as in Eq. \eqref{tildar}, choose $(i,
j) = (4, 3)$. Thus $(\Gamma, \gamma)$ yields a presentation $\langle S =\{x_1, x_2, x_3\} \, | \,R =\{r_1, r_2,
r_3\} \rangle \in \mathcal{P}_3(\mathbb{Z}^3) \setminus \mathcal{P}_2(\mathbb{Z}^3)$ with $\varphi(S,R)=24$. Then
$R$ contains three independent relations of weight 6 from the set $\{x_ix_jx_i^{-1}x_j^{-1}$,
$x_ix_j^{-1}x_kx_i^{-1}x_jx_k^{-1} \, : \, \{i, j, k\} =\{1, 2, 3\}\}$ (see the proof of part (v) of Lemma
\ref{lemma:psi(G)}). Without loss, we can assume that $R= \{x_1x_2x_1^{-1}x_2^{-1}$,
$(x_2x_3x_2^{-1}x_3^{-1})^{\varepsilon_1}$, $(x_1x_3x_1^{-1}x_3^{-1})^{\varepsilon_2}\}$ for some $\varepsilon_1,
\varepsilon_2 \in \{1, -1\}$. Then, all the components of $\Gamma_{12}$ and  $\Gamma_{34}$ are 6-cycles.
Similarly, all the components of $\Gamma_{13}$ and $\Gamma_{24}$ are 6-cycles. Let $G_{1}= C_6(a_1, \dots, a_6)$,
$G_{2}= C_6(b_1, \dots, b_6)$, $G_{3}= C_6(c_1, \dots, c_6)$ and $G_{4}= C_6(d_1, \dots, d_6)$. To form the
relations, there are exactly two edges of color $3$ (resp., $4$) between $G_i$ and $G_j$ for $1\leq i \neq j \leq
4$.  Then, up to an isomorphism, $\Gamma_{124}$ is as in Fig. \ref{fig:J4}. Now for the relation
$x_1x_2x_1^{-1}x_2^{-1}$, we can choose $v_1=b_6$ as in Eq. \eqref{tildar}. Then the cycle for
$x_1x_2x_1^{-1}x_2^{-1}$ is $H_1=C_6(b_6, a_1, d_4, b_3, a_4, d_1)$. Since $\Gamma_{23}$ consists of $4$-cycles,
it follows that $a_6d_5$, $a_5b_2$, $b_1d_6\in \gamma^{-1}(3)$. Then the cycle for the relation
$x_2x_3x_2^{-1}x_3^{-1}$ is $H_2=C_6(c_6, b_1, d_6, c_3, b_4, d_3)$. Again (since $\Gamma_{23}$ is union of
$4$-cycles and $\Gamma_{13}$ is union of $6$-cycles), $d_2c_1$, $b_5c_2$, $a_3c_4$, $a_2c_5\in \gamma^{-1}(3)$. Then $(\Gamma, \gamma) = \mathcal{J}_4$ of Fig. \ref{fig:J4}.

Now, the components $H_1$,
$H_2$ and $H_3 = C_6(c_1, a_6, d_5, c_4, a_3, d_2)$ yield the relations  $x_1x_2x_1^{-1}x_2^{-1}$, $x_2x_3
x_2^{-1} x_3^{-1}$ and $ x_1x_3x_1^{-1}x_3^{-1}$ respectively. Thus $(\Gamma, \gamma)$ yields the presentation $\langle  x_1, x_2, x_3\, | \, x_1x_2x_1^{-1}x_2^{-1}$, $x_2x_3x_2^{-1}x_3^{-1}, x_1x_3x_1^{-1}
x_3^{-1} \rangle \cong \mathbb{Z}^3$. This completes the proof.
\end{proof}

\begin{remark} \label{remark:MKN}
{\rm The crystallizations $\mathcal{K}_{2,1}$, $\mathcal{K}_{3,1}$ and $\mathcal{M}_{3,2}$ (in Figures
\ref{fig:J12} and \ref{fig:K31}) were originally found by Gagliardi {\em et al.} (\cite{fgg86, ga79b}). The first
two have the following natural generalization: Consider the bipartite graph $\Gamma$ consists of two disjoint
$2p$-cycles $G_1 = C_{2p}(a_1, b_1, \dots, a_p, b_p)$, $G_2= C_{2p}(c_1,d_1, \dots, c_p,d_p)$ together with $4p$
edges $a_ic_i$, $b_id_i$, $a_ic_{i+q}$, $b_id_{i+q}$ for $1\leq i\leq p$.  Consider the edge-coloring $\gamma$
with colors $1, 2, 3, 4$ of $\Gamma$ as: $\gamma(b_ia_{i+1}) = \gamma(d_ic_{i+1}) = 1$, $\gamma(a_ib_{i}) =
\gamma(c_id_{i}) = 2$, $\gamma(a_ic_{i+q}) = \gamma(b_id_{i+q}) = 3$ and $\gamma(a_ic_{i}) = \gamma(b_id_{i}) =
4$, $1\leq i\leq p$. (Summations in the subscripts are modulo $p$.) Then, $\mathcal{K}_{p,q} = (\Gamma, \gamma)$
is a $4p$-vertex crystallization of $L(p,q)$, for $p\geq 2$ and $q\geq 1$. This series is more or less known in
the literature. In the next section, we present some generalizations of $\mathcal{M}_{3,2}$. }
\end{remark}

\section{Two series of crystallizations of lens spaces}

Generalizing the construction of
$\mathcal{M}_{3,2}$ (Fig. \ref{fig:K31} (b)) we have obtained the following
series of crystallizations.

\begin{eg}[\boldmath{$4(k+q-1)$}-{\bf
vertex crystallization of \boldmath{$L(kq-1, q)$}}] \label{ex:Mkq} {\rm Let $q\geq 3$. For each $k \geq 2$, we
construct a $4(k+q-1)$-vertex 4-colored simple graph $\mathcal{M}_{k,q}=(\Gamma^k, \gamma^k)$ with the color set
$\{1, 2, 3, 4\}$ inductively which yields the presentation $\langle  x, y \, | \, x^q y^{-1}, y^kx^{-1} \rangle$.
For this, we want $g^k_{12}=g^k_{34}=3$. Then, without loss, $g^k_{13}=g^k_{24}=k+q-2$ and
$g^k_{14}=g^k_{23}=k+q-1$, where $g^k_{ij}$ is the number of components of $\Gamma^k_{ij}$ for $i \neq  j$.
These imply, $\Gamma^k_{14}$ and $\Gamma^k_{23}$ must be union of 4-cycles and  $\Gamma^k_{13}$ (resp.,
$\Gamma^k_{24}$) has two 6-cycles and $(k+q-4)$ \, $4$-cycles. Then, by Proposition \ref{prop:gagliardi79a},
$\mathcal{M}_{k,q}$ would be a crystallization of some connected closed 3-manifold $M_k$.


\begin{figure}[ht]
\tikzstyle{vert}=[circle, draw, fill=black!100, inner sep=0pt, minimum width=4pt] \tikzstyle{vertex}=[circle,
draw, fill=black!00, inner sep=0pt, minimum width=4pt] \tikzstyle{ver}=[] \tikzstyle{extra}=[circle, draw,
fill=black!50, inner sep=0pt, minimum width=2pt] \tikzstyle{edge} = [draw,thick,-] \centering
\begin{tikzpicture}[scale=0.5]
\begin{scope}[shift={(-2,0)}]
\foreach \x/\y in
{51.43/x^{2q-3},102.86/x^{2q-5},154.28/x^{7},205.72/x^{5},257.14/x^{3},308.57/x^{1},360/x^{2q-1}}{ \node[ver]
(\y) at (\x:3.2){$\y~~$};
    \node[vert] (\y) at (\x:4){};
} \foreach \x/\y in
{77.14/x^{2q-4},128.575/x^{8},180/x^{6},231.43/x^{4},282.86/x^{2},334.29/x^{2q},25.72/x^{2q-2}}{ \node[ver] (\y)
at (\x:3.1){$\y~~$};
    \node[vertex] (\y) at (\x:4){};
} \foreach \x/\y in {109.29/e1,115.72/e2,122.14/e3}{ \node[extra] (\y) at (\x:4){}; } \node[ver] (a^{7}) at
(115:7){}; \node[ver] (a^{8}) at (117:5){};
\end{scope}
\begin{scope}[shift={(5,0)}, rotate=-90]
\foreach \x/\y in {0/y^{4},180/y^{2}}{ \node[ver] (\y) at (\x:0.6){$\y$};
    \node[vertex] (\y) at (\x:1.3){};
} \foreach \x/\y in {90/y^{3},270/y^{1}}{ \node[ver] (\y) at (\x:0.6){$\y$};
    \node[vert] (\y) at (\x:1.3){};
} \node[ver] (a^{9}) at (240:2){}; \node[ver] (a^{10}) at (300:2){};
\end{scope}
\begin{scope}[]
\foreach \x/\y in
{51.43/z^{2q-3},102.86/z^{2q-5},154.28/z^{7},205.72/z^{5},257.14/z^{3},308.57/z^{1},360/z^{2q-1}}{ \node[ver]
(\y) at (\x:9.8){$~\y$};
    \node[vert] (\y) at (\x:9){};
} \foreach \x/\y in
{77.14/z^{2q-4},128.575/z^{8},180/z^{6},231.43/z^{4},282.86/z^{2},334.29/z^{2q},25.72/z^{2q-2}}{ \node[ver] (\y)
at (\x:9.8){$~\y$};
    \node[vertex] (\y) at (\x:9){};
}

\foreach \x/\y in {102.86/b^{2q-5},154.28/b^{7},205.72/b^{5},257.14/b^{3},308.57/b^{1}}{

    \node[ver] (\y) at (\x:7){};
} \foreach \x/\y in {128.575/b^{8},180/b^{6},231.43/b^{4},282.86/b^{2},334.29/b^{2q},360/b^{2q-1}}{

    \node[ver] (\y) at (\x:7){};
} \foreach \x/\y in {77.14/b^{2q-4},25.72/b^{2q-2},51.43/b^{2q-3}}{

    \node[ver] (\y) at (\x:6){};
} \foreach \x/\y in {109.29/e4,115.72/e5,122.14/e6}{ \node[extra] (\y) at (\x:9){}; } \node[ver] (a^{2q-5}) at
(115:8){}; \node[ver] (a^{2q-4}) at (121:6.3){}; \node[ver] (4) at (11,-6){$4$}; \node[ver] (3) at (11,-5){$3$};
\node[ver](2) at (11,-4){$2$}; \node[ver](1) at (11,-3){$1$}; \node[ver] (8) at (14,-6){}; \node[ver](7) at
(14,-5){}; \node[ver](6) at (14,-4){}; \node[ver] (5) at (14,-3){};
\end{scope}
\foreach \x/\y in
{x^{2}/x^{3},x^{4}/x^{5},x^{6}/x^{7},x^{2q-4}/x^{2q-3},x^{2q-2}/x^{2q-1},x^{2q}/x^{1},y^{3}/y^{2},
y^{1}/y^{4},1/5,2/6}{
    \path[edge] (\x) -- (\y);
} \foreach \x/\y in
{x^{2}/x^{1},x^{4}/x^{3},x^{6}/x^{5},x^{7}/x^{8},x^{2q-4}/x^{2q-5},x^{2q-2}/x^{2q-3},x^{2q}/x^{2q-1},
y^{3}/y^{4},y^{1}/y^{2}}{
    \path[edge] (\x) -- (\y);
} \foreach \x/\y in
{x^{2}/x^{1},x^{4}/x^{3},x^{6}/x^{5},x^{7}/x^{8},x^{2q-4}/x^{2q-5},x^{2q-2}/x^{2q-3},x^{2q}/x^{2q-1},
y^{3}/y^{4},y^{1}/y^{2},1/5}{
    \draw [line width=3pt, line cap=round, dash pattern=on 0pt off 2\pgflinewidth]  (\x) -- (\y);
} \foreach \x/\y in {z^{2}/z^{3},z^{4}/z^{5},z^{6}/z^{7},z^{2q-4}/z^{2q-3},z^{2q-2}/z^{2q-1},z^{2q}/z^{1}}{
    \path[edge] (\x) -- (\y);
} \foreach \x/\y in {z^{2}/z^{1},z^{4}/z^{3},z^{6}/z^{5},z^{7}/z^{8},z^{2q-4}/z^{2q-5},z^{2q-2}/z^{2q-3},
z^{2q}/z^{2q-1}}{
    \path[edge] (\x) -- (\y);
} \foreach \x/\y in {z^{2}/z^{1},z^{4}/z^{3},z^{6}/z^{5},z^{7}/z^{8},z^{2q-4}/z^{2q-5},z^{2q-2}/z^{2q-3},
z^{2q}/z^{2q-1}}{
    \draw [line width=3pt, line cap=round, dash pattern=on 0pt off 2\pgflinewidth]  (\x) -- (\y);
} \foreach \x/\y in
{x^{1}/z^{1},x^{2}/z^{2},x^{3}/z^{3},x^{4}/z^{4},x^{5}/z^{5},x^{6}/z^{6},x^{7}/z^{7},x^{8}/z^{8},
x^{2q-5}/z^{2q-5},x^{2q-4}/z^{2q-4},
x^{2q-3}/z^{2q-3},x^{2q-2}/z^{2q-2},x^{2q-1}/y^{2},x^{2q}/y^{1},y^{3}/z^{2q-1},y^{4}/z^{2q},4/8}{
    \path[edge, dashed] (\x) -- (\y);

} \foreach \x/\y in {x^{7}/a^{7},x^{8}/a^{8},z^{2q-5}/a^{2q-5},z^{2q-4}/a^{2q-4},3/7}{
    \path[edge,  dotted] (\x) -- (\y);

} \foreach \x/\z/\y in
{x^{2q}/b^{1}/z^{2},x^{1}/b^{2}/z^{3},x^{2}/b^{3}/z^{4},x^{3}/b^{4}/z^{5},x^{4}/b^{5}/z^{6},
x^{5}/b^{6}/z^{7},x^{6}/b^{7}/z^{8},
x^{2q-5}/b^{2q-4}/z^{2q-3},x^{2q-4}/b^{2q-3}/z^{2q-2},y^{2}/b^{2q-1}/z^{2q},x^{2q-3}/b^{2q-2}/z^{2q-1},
y^{3}/b^{2q}/z^{1},y^{1}/a^{9}/x^{2q-2},y^{4}/a^{10}/x^{2q-1}}{

\draw[edge, dotted] plot [smooth,tension=1.5] coordinates{(\x) (\z) (\y) }; }
\end{tikzpicture}
\vspace{-4mm} \caption{Crystallization $\mathcal{M}_{2,q}$ of L$(2q-1,q)$}\label{fig:M2q}
\end{figure}
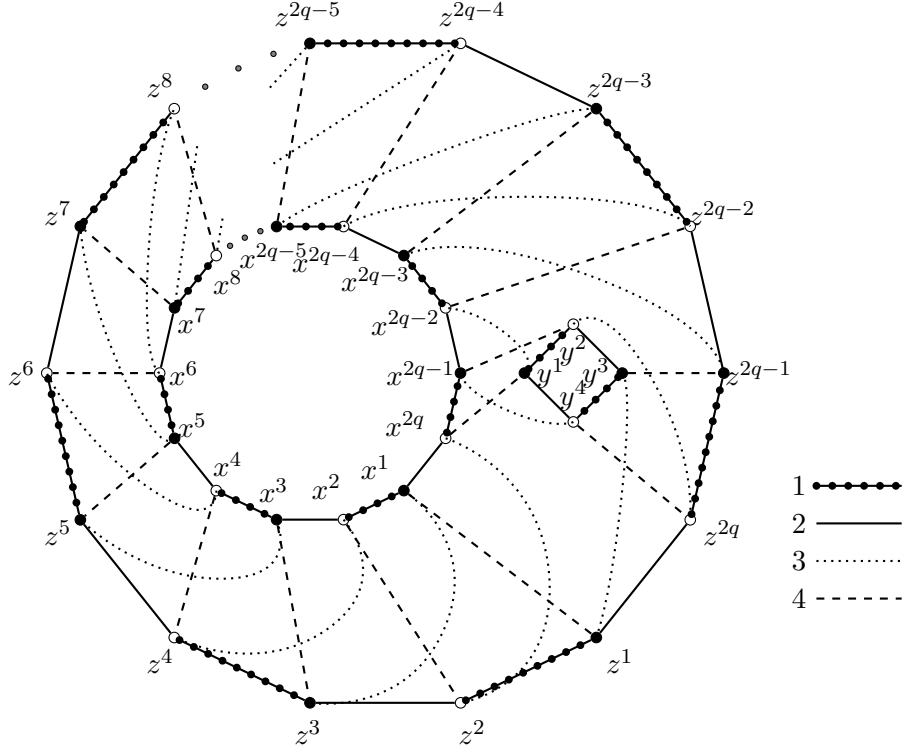


\smallskip

\noindent {\boldmath{$k=2$}} {\bf case:} The crystallization $\mathcal{M}_{2,q}$  is given in Fig. \ref{fig:M2q}.
Then, the components of $\Gamma^2_{12}$ are $G_{1} = C_{2q}(x^1, \dots, x^{2q})$, $G_2 = C_4 (y^1, \dots, y^4)$,
$G_3 = C_{2q}(z^1, \dots, z^{2q})$ and the components of $\Gamma^2_{34}$ are $H_1 = C_{2q}(y^1, x^{2q}, z^2, x^2,
\dots, z^{2q-2}$, $x^{2q-2})$, $H_2 = C_4(x^{2q-1}, y^2, z^{2q}, y^4)$, $H_3=C_{2q}(z^{2q-1}, y^3, z^1, x^1,
\dots, z^{2q-3}, x^{2q-3})$. Let $x$,  $y$ be the generators corresponding to $G_1$ and $G_2$ respectively. To
construct $\tilde{r}_1$ (resp., $\tilde{r}_2$) as in Eq. \eqref{tildar}, choose $(i, j)=(4,3)$ and $v_1= y^1$
(resp., $v_1=x^{2q-1}$). Then $H_1$ and $H_2$ represent the relations  $x^q y^{-1}$ and $y^2x^{-1}$ respectively.
Therefore, by Proposition \ref{prop:gagliardi79b}, $\pi(M_2,\ast) \cong \langle  x, y \, | \, x^q y^{-1},
y^2x^{-1} \rangle \cong \mathbb{Z}_{2q-1}$.

Let $T$ and  $T_2$ be the $3$-dimensional simplicial cell complexes represented by the color graphs
$\Gamma^2|_{\{x^1, x^2, x^3, z^3 \}}$ and $\Gamma^2|_{V(\Gamma^2)\setminus \{x^1, x^2, x^3, z^3 \}}$
respectively. Then $|T|$ and $|T_2|$ are solid tori and the facets (2-cells) of $T \cap T_2$ are $x^1_2, x^1_4,
x^2_3, x^2_4$, $x^3_1, x^3_3 ,z^3_1 ,z^3_2$. Thus, $|T \cap T_2|$ is a torus (see Fig. \ref{fig:M3q} (b)) with
$\pi_1(|T \cap T_2|,v_1) = \langle \alpha=[a], \beta=[b] \, | \, \alpha \beta \alpha^{-1} \beta^{-1}\rangle$,
where $a =x^2_{34}x^3_{34}$ and $b = x^3_{34}z^3_{13}z^3_{23}$. Then $b = x^1_{34}x^1_{13}x^1_{23} =
\partial(x^1_3) \sim 1 $ in $|T|$. Therefore, $\pi_1(|T|, v_1)=\langle \alpha, \beta\, | \, \beta  \rangle$.

Since $\alpha\beta=\beta\alpha$ in $|T| \cap |T_2|$, it follows that $ab \sim ba$ in $|T_2|$. Now $ab =
(x^2_{34}x^1_{34})(x^1_{34}x^1_{13}x^1_{23})$ $\sim x^2_{34}x^1_{13}x^1_{23} = x^2_{34}x^2_{13}x^1_{23} \sim
x^2_{23}x^1_{23}$. Therefore, $a^2b \sim aba \sim x^2_{23}x^1_{23}x^2_{34}x^1_{34}=
x^2_{23}x^{2q}_{23}x^{2q}_{34}x^1_{34} \sim x^2_{23}x^{2q}_{13}x^1_{34} = x^2_{23}z^1_{13}z^1_{34} \sim
x^2_{23}z^1_{23}$. Thus, $a^3b \sim x^2_{34}x^1_{34} x^2_{23}z^1_{23}$ $=x^2_{34}x^3_{34} x^3_{23}z^1_{23} \sim
x^2_{34}x^3_{13}z^1_{23}= x^4_{34}x^4_{13}z^1_{23} \sim x^4_{23}z^1_{23}$. Similarly, $a^{q-1}b \sim
x^{2q-4}_{23}z^1_{23}$. Therefore, $a^qb \sim x^2_{34}x^1_{34} x^{2q-4}_{23}z^1_{23} = x^2_{34}x^{2q-3}_{34}
x^{2q-3}_{23}z^1_{23} \sim x^2_{34}x^{2q-3}_{13}z^1_{23}= x^{2q-2}_{34}x^{2q-2}_{13}z^1_{23} \sim
x^{2q-2}_{23}z^1_{23}=x^{2q-1}_{23}z^1_{23} \sim x^{2q-1}_{34}x^{2q-1}_{13}z^1_{23}=
x^{2q-1}_{34}z^{1}_{13}z^1_{23} \sim x^{2q-1}_{34}z^{1}_{34}= x^{2q-1}_{34}z^{2q-1}_{34}$. Since $k=2$, we have
$z^{2q-1}_{13} = z^{2q}_{13}$. This implies,  $a^{2q-1}b^2 \sim x^{2q-1}_{34}z^{2q-1}_{34} x^{2q-4}_{23}z^1_{23}
= x^{2q-1}_{34}z^{2q-1}_{34} z^{2q-1}_{23}z^1_{23} \sim x^{2q-1}_{34}z^{2q-1}_{13}z^1_{23} =
z^{2q}_{34}z^{2q}_{13}z^{2q}_{23} = \partial (z^{2q}_3) \sim 1$ in $|T_2|$. Thus $\pi_1(|T_2|, v_1)=\langle
\alpha, \beta\, | \, \alpha^{2q-1} \beta^2, \alpha \beta \alpha^{-1} \beta^{-1}\rangle$. This implies that $|T
|\cup |T_2| =L(2q-1,2)$. Therefore, $\mathcal{M}_{2,q}$ is a crystallization of $L(2q-1,2) \cong L(2q-1,q)$.

\smallskip

\noindent {\boldmath{$k=3$}} {\bf case:} Here $z^{2q-1}_{13} \neq z^{2q}_{13}$. Let $\Gamma^3=(V(\Gamma^2)  \cup
\{y^5$,  $y^6$, $z^{2q+1}$, $z^{2q+2}\}$, $E(\Gamma^2) \setminus \{y^2z^{2q}$, $y^3z^{1}$, $y^3y^4$,
$z^{2q-1}z^{2q}\}\cup \{y^3y^5$, $y^5y^6$, $y^6y^4$, $z^{2q-1}z^{2q+1}$, $z^{2q+1} z^{2q+2}$, $z^{2q+2}z^{2q},
y^{5}z^{2q+1}$, $y^6z^{2q+2}$, $y^2z^{2q+1}$, $y^3z^{2q+2}$, $y^5z^{2q}$, $y^6z^{1}\} )$. Consider the following
coloring $\gamma^3$ on the edges of $\Gamma^3$: same colors on the old edges as in $\mathcal{M}_{2, q}$, color
$1$ on the edges $y^3y^5$, $y^6y^4$, $z^{2q-1}z^{2q+1}$, $z^{2q+2}z^{2q}$, color $2$ on the edges $y^5y^6$,
$z^{2q+1}z^{2q+2}$, color $3$ on the edges $y^2z^{2q+1}, y^3z^{2q+2}$, $y^5z^{2q}, y^6z^{1}$ and color $4$ on the
edges $y^{5}z^{2q+1}, y^6z^{2q+2}$  (see Fig.  \ref{fig:M3q} $(a)$). Let $T$ be as in the case $k=2$ and $T_3$ be
the cell complex represented by the colored graph $\Gamma^3|_{V(\Gamma^3)\setminus \{x^1, x^2, x^3, z^3 \}}$.

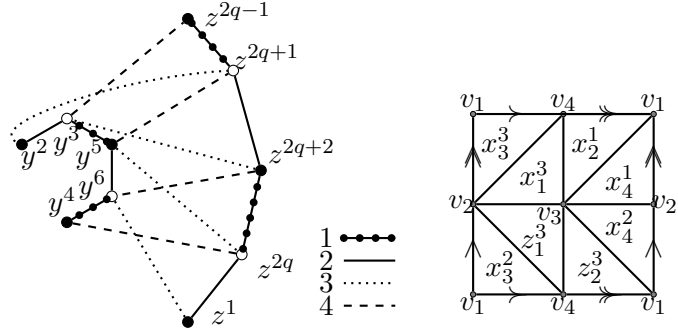
\begin{figure}[ht]
\tikzstyle{vert}=[circle, draw, fill=black!100, inner sep=0pt, minimum width=4pt]
\tikzstyle{vertex}=[circle, draw, fill=black!00, inner sep=0pt, minimum width=4pt]
\tikzstyle{ver}=[]
\tikzstyle{extra}=[circle, draw, fill=black!50, inner sep=0pt, minimum width=2pt]
\tikzstyle{edge} = [draw,thick,-]
\centering
\begin{tikzpicture}[scale=0.3]
\begin{scope}[shift={(2,-5.5)}]
\foreach \x/\y in {16/0,16/4,16/8}{
\node[extra] (x\y) at (\x,\y){};}
\foreach \x/\y in {20/0,20/4,20/8}{
\node[extra] (y\y) at (\x,\y){};}
\foreach \x/\y in {24/0,24/4,24/8}{
\node[extra] (z\y) at (\x,\y){};}
\foreach \x/\y in {x0/y0,x4/y4,x8/z8,y0/z0,y4/z4,y8/z8,x0/x4,x4/x8,y0/y4,y4/y8,z0/z4,z4/z8,x4/y8,y4/z8,y0/x4,z0/y4}{
\path[edge] (\x) -- (\y);}
\node[ver] () at (17.2,1.2){$x_3^2$};
\node[ver] () at (18.7,2.4){$z_1^3$};
\node[ver] () at (19.4,3.5){$v_3$};
\node[ver] () at (21.2,1.2){$z_2^3$};
\node[ver] () at (22.5,3){$x_4^2$};
\node[ver] () at (22.5,5){$x_4^1$};
\node[ver] () at (21,6.7){$x_2^1$};
\node[ver] () at (18.7,5.2){$x_1^3$};
\node[ver] () at (17,6.7){$x_3^3$};
\node[ver] () at (16,-0.5){$v_1$};
\node[ver] () at (20,-0.5){$v_4$};
\node[ver] () at (24,-0.5){$v_1$};
\node[ver] () at (16,8.5){$v_1$};
\node[ver] () at (20,8.5){$v_4$};
\node[ver] () at (24,8.5){$v_1$};
\node[ver] () at (15.5,4){$v_2$};
\node[ver] () at (24.5,4){$v_2$};
\node[ver] () at (18,0){$\succ$};
\node[ver] () at (22,0){$\succ$};
\node[ver] () at (22.2,0){$\succ$};
\node[ver] () at (18,8){$\succ$};
\node[ver] () at (22,8){$\succ$};
\node[ver] () at (22.2,8){$\succ$};
\node[ver] () at (16,2){$\wedge$};
\node[ver] () at (16,6){$\wedge$};
\node[ver] () at (16,6.2){$\wedge$};
\node[ver] () at (24,2){$\wedge$};
\node[ver] () at (24,6){$\wedge$};
\node[ver] () at (24,6.2){$\wedge$};
\end{scope}

\begin{scope}[shift={(16,-2)}, rotate=180]
\foreach \x/\y in {1/4,1/3,1/2,1/1}{
\node[ver] (d\y) at (\x,\y){};}
\foreach \x/\y in {4.5/4,4.5/3,4.5/2,4.5/1}{
\node[ver] (c\y) at (\x,\y){$\y$};}
\foreach \x/\y in {c1/d1,c2/d2}{
\path[edge] (\x) -- (\y);}
\path[edge, dashed] (c4) -- (d4);
\path[edge, dotted] (c3) -- (d3);
\draw [line width=3pt, line cap=round, dash pattern=on 0pt off 2\pgflinewidth]  (c1) -- (d1);
\end{scope}
\begin{scope}[rotate=-90]
\foreach \x/\y in {0/y^{4},120/y^{5},240/y^{2}}{
\node[ver] (\y) at (\x:1.4){$\y~$};
    \node[vert] (\y) at (\x:2.3){};
}
\foreach \x/\y in {60/y^{6},180/y^{3}}{
\node[ver] (\y) at (\x:1.3){$\y$};
    \node[vertex] (\y) at (\x:2.3){};
}
\node[ver] (b2) at (180:3){};
\end{scope}
\begin{scope}[]
\foreach \x/\y in {51.43/z^{2q-1},308.57/z^{1},360/z^{2q+2}}{
    \node[vert] (\y) at (\x:8.6){};
}
\foreach \x/\y in {45/z^{2q-1},5/z^{2q+2}}{
\node[ver] () at (\x:9.8){$~~~\y$};
}
\node[ver] () at (318:9.4){$z^{1}$};
\foreach \x/\y in {334.29/z^{2q},31/z^{2q+1}}{
\node[ver] (\y) at (\x:9.8){$~~\y$};
    \node[vertex] (\y) at (\x:8.6){};
}

\end{scope}
\foreach \x/\y in
{z^{2q}/z^{2q+2},y^{5}/y^{3},y^{6}/y^{4},z^{2q-1}/z^{2q+1},z^{2q}/z^{1},y^{2}/y^{3},y^{6}/y^{5},
z^{2q+2}/z^{2q+1}}{
    \path[edge] (\x) -- (\y);
}
\foreach \x/\y in {z^{2q}/z^{2q+2},y^{5}/y^{3},y^{6}/y^{4},z^{2q-1}/z^{2q+1}}{
    \draw [line width=3pt, line cap=round, dash pattern=on 0pt off 2\pgflinewidth]  (\x) -- (\y);
}
\foreach \x/\y in {y^{4}/z^{2q},z^{2q+2}/y^{6},y^{5}/z^{2q+1},y^{3}/z^{2q-1}}{
    \path[edge, dashed] (\x) -- (\y);
}
\foreach \x/\y in {z^{2q+2}/y^{3},y^{5}/z^{2q},y^{6}/z^{1}}{
    \path[edge, dotted] (\x) -- (\y);
}
\draw[edge, dotted] plot [smooth,tension=1.5] coordinates{(y^{2}) (b2) (z^{2q+1}) };
\end{tikzpicture}
\caption{($a$) Crystallization $\mathcal{M}_{3,q}$ of L$(3q-1,q)$ \hspace{8mm} ($b$) $|T_1 \cap
T_k|$\hspace{25mm}} \label{fig:M3q}
\end{figure}

Then, $a^{2q-1}b^2  \sim z^{2q}_{34}z^{2q-1}_{13}z^{2q}_{23} = z^{2q+1}_{34}z^{2q+1}_{13}z^{2q}_{23} \sim
z^{2q+1}_{23}z^{2q}_{23} =z^{2q+2}_{23}z^{2q}_{23}$. This implies, $a^{3q-1}b^3=(a^{q}b) (a^{2q-1} b^2) \sim
(x^{2q-1}_{34}z^{2q-1}_{34})(z^{2q+2}_{23}z^{2q}_{23})= z^{2q}_{34}z^{2q+2}_{34}z^{2q+2}_{23}z^{2q}_{23} \sim
z^{2q}_{34}z^{2q+2}_{13}z^{2q}_{23}$ $= z^{2q}_{34}z^{2q}_{13}z^{2q}_{23} = \partial (z^{2q}_3) \sim 1$ in
$|T_3|$. Thus, $\pi_1(|T_3|, v_1)=\langle  \alpha, \beta\, | \, \alpha^{3q-1} \beta^3, \alpha \beta \alpha^{-1}
\beta^{-1}\rangle$ and hence $|T |\cup |T_3| =L(3q-1,3)$. Therefore, $\mathcal{M}_{3,q}$ is a crystallization of
$L(3q-1,3) \cong L(3q-1,q)$.

\smallskip

\noindent {\boldmath{$k\geq 4$}} {\bf case:} Consider the graph $\Gamma^k=(V(\Gamma^{k-1})  \cup \{y^{2k-1},
y^{2k}, z^{2q+2k-5}, z^{2q+2k-4}\}$, $E(\Gamma^{k-1}) \setminus \{y^{2k-3}z^{2q}, y^{2k-2}z^{1}, y^{2k-2}y^4,
z^{2q+2k-6}z^{2q}\}\cup \{y^{2k-2}y^{2k-1}, y^{2k-1}y^{2k}, y^{2k}y^4, z^{2q+2k-6}z^{2q+2k-5}$, \linebreak
$z^{2q+2k-5}z^{2q+2k-4}$, $z^{2q+2k-4}z^{2q}$, $y^{2k-1}z^{2q+2k-5}$, $y^{2k}z^{2q+2k-4}$, $y^{2k-3}z^{2q+2k-5}$,
$y^{2k-2}z^{2q+2k-4}$, $y^{2k-1}z^{2q}$, $y^{2k}z^{1}\} )$. Also, consider the following coloring $\gamma^k$ on
the edges of $\Gamma^k$: same colors on the old edges as in $\mathcal{M}_{k-1,q}$, color $1$ on the edges
$y^{2k-2}y^{2k-1}$, $y^{2k}y^4$, $z^{2q+2k-6}z^{2q+2k-5}$, $z^{2q+2k-4}z^{2q}$, color $2$ on the edges
$y^{2k-1}y^{2k}$, $z^{2q+2k-5}z^{2q+2k-3}$, color $3$ on the edges \linebreak $y^{2k-3}z^{2q+2k-5}$,
$y^{2k-2}z^{2q+2k-4}$, $y^{2k-1}z^{2q}$, $y^{2k}z^{1}$ and color $4$ on the edges $y^{2k-1}z^{2q+2k-5}$,
$y^{2k}z^{2q+2k-4}$. Let $T$ be as in the case $k=2$ and $T_k$ be the cell complex represented by the colored
graph $\Gamma^k|_{V(\Gamma^k) \setminus \{x^1, x^2, x^3, z^3 \}}$.

\smallskip

\noindent {\em Claim.} $a^{kq-1}b^k \sim z^{2q}_{34}z^{2q+2k-4}_{13} z^{2q}_{23}$ in $|T_k|$.

\smallskip

We prove the claim by induction. It is true for $k=3$. Assume that $a^{(k-1)q-1}b^{k-1} \sim
z^{2q}_{34}z^{2q+2(k-1)-4}_{13} z^{2q}_{23}$ in $|T_{k-1}|$. Now, $a^{q}b \sim z^{2q}_{34}z^{1}_{34} =
z^{2q}_{34} z^{2q+2k-4}_{34}$ and $a^{(k-1)q-1}b^{(k-1)} \sim z^{2q}_{34} z^{2q+2k-6}_{13}z^{2q}_{23} =
z^{2q+2k-5}_{34} z^{2q+2k-5}_{13}z^{2q}_{23} \sim z^{2q+2k-5}_{23}z^{2q}_{23} = z^{2q+2k-4}_{23}z^{2q}_{23}$.
Thus, $a^{kq-1}b^{k} \sim (a^{q}b)(a^{(k-1)q-1}b^{(k-1)} \sim z^{2q}_{34}z^{2q+2k-4}_{34}z^{2q+2k-4}_{23}
z^{2q}_{23}=$ $z^{2q}_{34}z^{2q+2k-4}_{13}$ $z^{2q}_{23}$ in $|T_{k}|$. The claim now follows by induction.

Since $z^{2q}_{13} = z^{2q+2k-4}_{13}$ in $T_k$, by the claim we get  $a^{kq-1}b^{k} \sim 1$ in $|T_k|$. Thus,
$\pi_1(|T_k|, v_1)=\langle \alpha, \beta\, | \, \alpha^{kq-1} \beta^k, \alpha \beta \alpha^{-1}
\beta^{-1}\rangle$ and hence $|T |\cup |T_k| =L(kq-1,k)\cong L(kq-1,q)$. Therefore, $\mathcal{M}_{k,q}$ is a
crystallization of $L(kq-1,q)$.}
\end{eg}

\begin{eg}[{\bf \boldmath{$4(k+q)$}-vertex crystallization of \boldmath{$L(kq+1, q)$}}] \label{ex:Nkq}
{\rm Let $q\geq 4$. For each $k \geq 1$, we construct a $4(k+q)$-vertex 4-colored simple graph
$\mathcal{N}_{k,q}= (\Gamma^k, \gamma^k)$ with the color set $\{1, 2, 3, 4\}$ inductively which yields the
presentation $\langle  x, y \, | \, x^q y^{-1}, xy^k \rangle$. For this, we want $g^k_{12}=g^k_{34}=3$. Then,
without loss, $g^k_{13}=g^k_{24}=k+q-1$ and $g^k_{14}=g^k_{23}=k+q$, where $g^k_{ij}$ is the number of components
of $\Gamma^k_{ij}$ for $i \neq  j$. These imply, $\Gamma^k_{14}$ and $\Gamma^k_{23}$ must be union of
4-cycles and  $\Gamma^k_{13}$ (resp., $\Gamma^k_{24}$) has two 6-cycles and $(k+q-4)$ \, $4$-cycles. Then, by
Proposition \ref{prop:gagliardi79a}, $\mathcal{N}_{k,q}$ would be a crystallization of some connected closed
3-manifold $M_k$.

\noindent {\boldmath{$k=1$}} {\bf case:} The crystallization $\mathcal{N}_{1,q}$  is given in Fig. \ref{fig:N1q}.
Then, the components of $\Gamma^1_{12}$ are $G_{1} = C_{2q}(x^1, \dots, x^{2q})$, $G_2 = C_4 (y^1, \dots, y^4)$,
$G_3 = C_{2q}(z^1, \dots, z^{2q})$ and the components of $\Gamma^1_{34}$ are $H_1 = C_{2q}(y^3, x^{2}, z^2, x^4,
\dots, z^{2q-2}, x^{2q})$, $H_2 = C_4(z^{2q-1}, x^1, z^{1}, y^1)$, $H_3=C_{2q}(x^3, y^2, z^{2q}, y^4, x^{2q-1},
z^{2q-3}, x^{2q-3}, \dots, z^{4}, x^4, z^3)$. Let $x$,  $y$ be the generators corresponding to $G_1$ and $G_2$
respectively. To construct $\tilde{r}_1$ (resp., $\tilde{r}_2$) as in Eq. \eqref{tildar}, choose $(i, j)=(4,3)$
and $v_1= y^3$ (resp., $v_1=z^{2q-1}$). Then $H_1$ and $H_2$ represent the relations $x^q y^{-1}$ and $xy$
respectively. Therefore, by Proposition \ref{prop:gagliardi79b}, $\pi(M_1,\ast) \cong \langle  x, y \, | \, x^q
y^{-1}, xy \rangle \cong \mathbb{Z}_{q+1}$.


\begin{figure}[ht]

\tikzstyle{vert}=[circle, draw, fill=black!100, inner sep=0pt, minimum width=4pt] \tikzstyle{vertex}=[circle,
draw, fill=black!00, inner sep=0pt, minimum width=4pt] \tikzstyle{ver}=[] \tikzstyle{extra}=[circle, draw,
fill=black!50, inner sep=0pt, minimum width=2pt] \tikzstyle{edge} = [draw,thick,-]

\centering
\begin{tikzpicture}[scale=0.55]
\begin{scope}[shift={(-2,0)}]
\foreach \x/\y in
{51.43/x^{2q-2},102.86/x^{2q-4},154.28/x^{8},205.72/x^{6},257.14/x^{4},308.57/x^{2},360/x^{2q}}{ \node[ver] (\y)
at (\x:3.3){$\y~~$};
    \node[vertex] (\y) at (\x:4){};
} \foreach \x/\y in
{77.14/x^{2q-3},128.575/x^{9},180/x^{7},231.43/x^{5},282.86/x^{3},334.29/x^{1},25.72/x^{2q-1}}{ \node[ver] (\y)
at (\x:3.3){$\y~~$};
    \node[vert] (\y) at (\x:4){};
} \foreach \x/\y in {109.29/e1,115.72/e2,122.14/e3}{ \node[extra] (\y) at (\x:4){}; } \node[ver] (a^{7}) at
(110:6.8){}; \node[ver] (a^{8}) at (117:5){};
\end{scope}
\begin{scope}[shift={(5,0)}, rotate=-90]
\foreach \x/\y in {0/y^{2},180/y^{4}}{ \node[ver] (\y) at (\x:0.6){$\y$};
    \node[vertex] (\y) at (\x:1.4){};
} \foreach \x/\y in {90/y^{1},270/y^{3}}{ \node[ver] (\y) at (\x:0.7){$\y$};
    \node[vert] (\y) at (\x:1.4){};
} \node[ver] (a^{9}) at (180:3){}; \node[ver] (a^{10}) at (180:2){}; \node[ver] (b^{1}) at (320:3){};
\end{scope}
\begin{scope}[]
\foreach \x/\y in
{51.43/z^{2q-2},102.86/z^{2q-4},154.28/z^{8},205.72/z^{6},257.14/z^{4},308.57/z^{2},360/z^{2q}}{ \node[ver] (\y)
at (\x:9.6){$~~\y$};
    \node[vert] (\y) at (\x:9){};
} \foreach \x/\y in
{77.14/z^{2q-3},128.575/z^{9},180/z^{7},231.43/z^{5},282.86/z^{3},334.29/z^{1},25.72/z^{2q-1}}{ \node[ver] (\y)
at (\x:9.8){$~~\y$};
    \node[vertex] (\y) at (\x:9){};
}

\foreach \x/\y in {102.86/b^{2q-4},154.28/b^{8},205.72/b^{6},257.14/b^{4}}{

    \node[ver] (\y) at (\x:7){};
} \foreach \x/\y in {128.575/b^{9},180/b^{7},231.43/b^{5},360/b^{2q}}{

    \node[ver] (\y) at (\x:7){};
} \foreach \x/\y in {77.14/b^{2q-3},25.72/b^{2q-1},51.43/b^{2q-2},308.57/b^{2},282.86/b^{3}}{

    \node[ver] (\y) at (\x:6){};
} \foreach \x/\y in {109.29/e4,115.72/e5,122.14/e6}{ \node[extra] (\y) at (\x:9){}; } \node[ver] (a^{2q-4}) at
(122:7){}; \node[ver] (a^{2q-3}) at (133:6){}; \node[ver] (a^{2q-5}) at (115:8){}; \node[ver] (a^{2q-4}) at
(121:6.3){}; \node[ver] (4) at (11,-6){$4$}; \node[ver] (3) at (11,-5){$3$}; \node[ver](2) at (11,-4){$2$};
\node[ver](1) at (11,-3){$1$}; \node[ver] (8) at (14,-6){}; \node[ver](7) at (14,-5){}; \node[ver](6) at
(14,-4){}; \node[ver] (5) at (14,-3){};
\end{scope}
\foreach \x/\y in {x^{3}/x^{4},x^{5}/x^{6},x^{7}/x^{8},x^{2q-3}/x^{2q-2},x^{2q-1}/x^{2q},x^{1}/x^{2},
y^{4}/y^{3},y^{1}/y^{2},1/5,2/6}{
    \path[edge] (\x) -- (\y);
} \foreach \x/\y in
{x^{2}/x^{3},x^{4}/x^{5},x^{6}/x^{7},x^{9}/x^{8},x^{2q-4}/x^{2q-3},x^{2q-2}/x^{2q-1},x^{2q}/x^{1},
y^{1}/y^{4},y^{3}/y^{2}}{
    \path[edge] (\x) -- (\y);
} \foreach \x/\y in
{x^{2}/x^{3},x^{4}/x^{5},x^{6}/x^{7},x^{9}/x^{8},x^{2q-4}/x^{2q-3},x^{2q-2}/x^{2q-1},x^{2q}/x^{1},
y^{1}/y^{4},y^{3}/y^{2}}{
    \draw [line width=3pt, line cap=round, dash pattern=on 0pt off 2\pgflinewidth]  (\x) -- (\y);
} \foreach \x/\y in {z^{4}/z^{3},z^{6}/z^{5},z^{8}/z^{7},z^{2q-2}/z^{2q-3},z^{2q}/z^{2q-1},z^{2}/z^{1}}{
    \path[edge] (\x) -- (\y);
} \foreach \x/\y in {z^{2}/z^{3},z^{4}/z^{5},z^{6}/z^{7},z^{9}/z^{8},z^{2q-4}/z^{2q-3},z^{2q-2}/z^{2q-1},
z^{2q}/z^{1}}{
    \path[edge] (\x) -- (\y);
} \foreach \x/\y in {z^{2}/z^{3},z^{4}/z^{5},z^{6}/z^{7},z^{9}/z^{8},z^{2q-4}/z^{2q-3},z^{2q-2}/z^{2q-1},
z^{2q}/z^{1},1/5}{
    \draw [line width=3pt, line cap=round, dash pattern=on 0pt off 2\pgflinewidth]  (\x) -- (\y);
} \foreach \x/\y in
{x^{1}/z^{1},x^{2}/z^{2},x^{3}/z^{3},x^{4}/z^{4},x^{5}/z^{5},x^{6}/z^{6},x^{7}/z^{7},x^{8}/z^{8},
x^{9}/z^{9},x^{2q-4}/z^{2q-4},
x^{2q-3}/z^{2q-3},x^{2q-2}/z^{2q-2},x^{2q-1}/y^{4},x^{2q}/y^{3},y^{1}/z^{2q-1},y^{2}/z^{2q},3/7}{
    \path[edge, dotted] (\x) -- (\y);

} \foreach \x/\y in {z^{9}/a^{7},z^{8}/a^{8},x^{2q-3}/a^{2q-4},x^{2q-4}/a^{2q-3},y^{1}/z^{1},y^{4}/z^{2q},4/8}{
    \path[edge,  dashed] (\x) -- (\y);

} \foreach \x/\z/\y in
{z^{2}/b^{3}/x^{4},z^{3}/b^{4}/x^{5},z^{4}/b^{5}/x^{6},z^{5}/b^{6}/x^{7},z^{6}/b^{7}/x^{8}, z^{7}/b^{8}/x^{9},
z^{2q-4}/b^{2q-3}/x^{2q-2},z^{2q-3}/b^{2q-2}/x^{2q-1},y^{2}/b^{2}/x^{3},z^{2q-1}/a^{10}/x^{1},y^{3}/b^{1}/x^{2},
z^{2q-2}/a^{9}/x^{2q}} {

\draw[edge, dashed] plot [smooth,tension=1.5] coordinates{(\x) (\z) (\y) }; }
\end{tikzpicture}
\vspace{-4mm} \caption{Crystallization $\mathcal{N}_{1,q}$ of $L(q+1,q)$}\label{fig:N1q}
\end{figure}
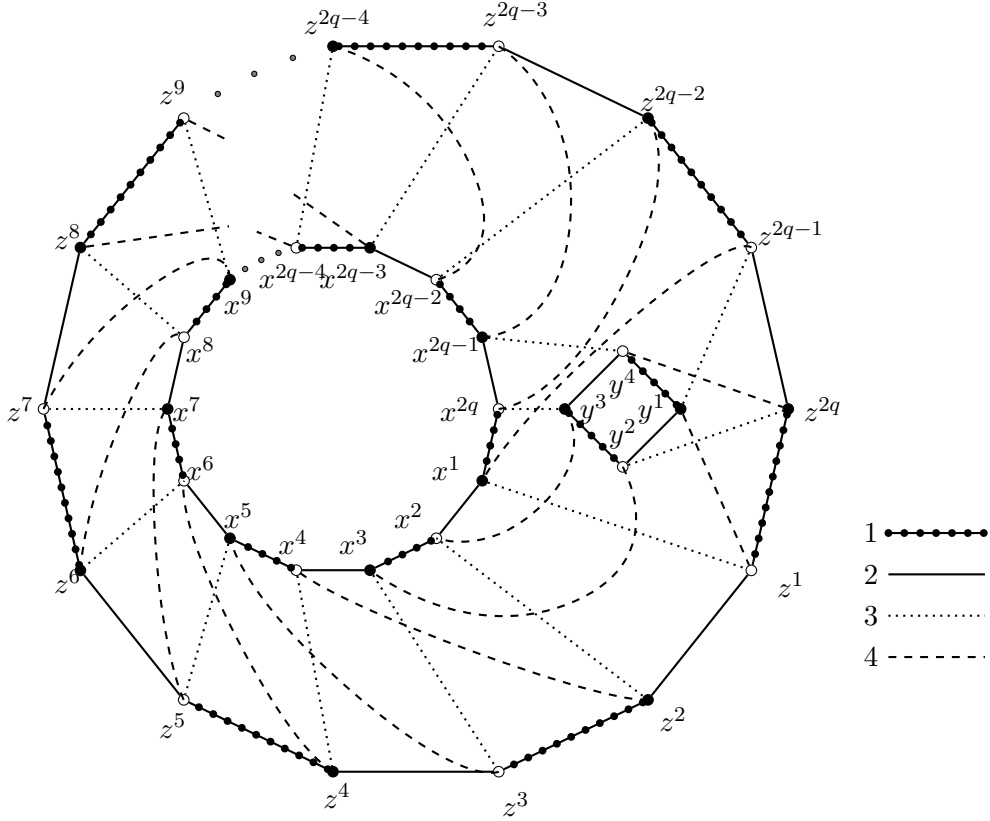

Let $T$ and  $T_1$ be the $3$-dimensional simplicial cell complexes represented by the color graphs
$\Gamma^1|_{\{x^5, x^4, x^3, z^3\}}$ and $\Gamma^1|_{V(\Gamma^1)\setminus \{ x^5, x^4, x^3, z^3 \}}$
respectively. Then $|T|$ and $|T_1|$ are solid tori and the facets (2-cells) of $T \cap T_1$ are $x^5_2$,
$x^5_3$, $x^4_3$, $x^4_4$, $x^3_1$, $x^3_4$, $z^3_1$, $z^3_2$. Thus, $|T \cap T_1|$ is a torus (see Fig.
\ref{fig:N2q} (b)) with $\pi_1(|T \cap T_1|,v_1) = \langle \alpha=[a], \beta=[b] \, | \, \alpha \beta \alpha^{-1}
\beta^{-1}\rangle$, where $a =x^4_{34}x^3_{34}$ and $b = x^3_{34}x^3_{13}x^4_{23}$. Then $b = x^3_{34}
x^3_{13}x^3_{23} = \partial(x^1_3) \sim 1 $ in $|T|$. Therefore, $\pi_1(|T|, v_1)=\langle \alpha, \beta\, | \,
\beta  \rangle$.

Since $\alpha\beta=\beta\alpha$ in $|T \cap T_1|$, it follows that $ab \sim ba$ in $|T_1|$. Now, $ab =
(x^4_{34}x^3_{34})(x^3_{34}x^3_{13}x^3_{23})$ $\sim x^4_{34}x^3_{13}x^3_{23} = z^2_{34}z^2_{13}x^3_{23} \sim
z^2_{23}x^3_{23} = z^1_{23}x^3_{23}$. Thus, $a^2b \sim aba \sim (z^1_{23}x^3_{23})(x^4_{34}x^3_{34})=
z^1_{23}x^4_{23}x^4_{34}$ $x^3_{34} \sim z^1_{23}x^4_{13}x^3_{34} = z^1_{23}x^5_{13}x^5_{34} \sim
z^1_{23}x^5_{23} = z^1_{23}x^6_{23}$. Therefore, $a^3b  \sim aba^2 \sim z^1_{23}x^6_{23}x^4_{34}x^3_{34} =
z^1_{23}x^6_{23}x^6_{34}x^3_{34} \sim z^1_{23}x^6_{13}x^3_{34}= z^1_{23}x^7_{13}x^7_{34} \sim z^1_{23}x^7_{23} =
z^1_{23}x^8_{23}$. Similarly we get, $a^{q-2}b \sim z^1_{23}x^{2q-2}_{23}$. Thus, $a^{q-1}b \sim a^{q-2}ba \sim
z^1_{23}x^{2q-2}_{23}x^4_{34}x^3_{34} = z^1_{23}x^{2q-2}_{23}x^{2q-2}_{34}x^3_{34} \sim z^1_{23}x^{2q-2}_{13}
x^3_{34} = z^1_{23}x^{2q-1}_{13}x^{2q-1}_{34} \sim z^1_{23}x^{2q-1}_{23} = z^1_{23}x^{2q}_{23}$. Therefore,
$a^{q}b \sim z^1_{23}x^{2q}_{23}x^4_{34}x^3_{34} = z^1_{23}x^{2q}_{23} x^{2q}_{34}$ $x^3_{34}$ $\sim
z^1_{23}x^{2q}_{13}x^3_{34} = z^1_{23}y^{2}_{13}y^{2}_{34} \sim z^1_{23}y^{2}_{23} \sim z^1_{34}z^1_{13}
y^{2}_{23} = z^1_{34}y^2_{13}y^{2}_{23} \sim z^1_{34}y^2_{34} = z^1_{34}z^{2q}_{34}$. Again, $a =
x^4_{34}x^3_{34} = y^3_{34}y^2_{34} \sim y^3_{24}y^3_{14}y^2_{34} = y^3_{24}y^2_{14}y^2_{34} \sim
y^3_{24}y^2_{24} = y^3_{24}z^1_{24}$. Therefore, $a^{q+1}b \sim aa^{q}b  \sim y^3_{24}z^1_{24}z^1_{34}z^{2q}_{34}
\sim$ $z^{2q}_{24}z^1_{14}z^{2q}_{34}= z^{2q}_{24}z^{2q}_{14}z^{2q}_{34} =\partial (z^{2q}_4) \sim 1$ in $|T_1|$.
Thus $\pi_1(|T_1|, v_1)=\langle  \alpha, \beta\, | \, \alpha^{q+1} \beta, \alpha \beta \alpha^{-1}
\beta^{-1}\rangle$. This implies that $|T |\cup |T_1| =L(q+1,1)$. Therefore, $\mathcal{N}_{1,q}$ is a
crystallization of $L(q+1,1) \cong L(q+1,q)$.

\smallskip

\noindent {\boldmath{$k=2$}} {\bf case:} Here $z^{1}_{14} \neq z^{2q}_{14}$. Let $\Gamma^2=(V(\Gamma^1)  \cup
\{y^5,  y^6, z^{2q+1} ,z^{2q+2}\}, E(\Gamma^1) \setminus \{y^2z^{2q}, y^1z^{2q-1}$, $y^1y^4$, $z^{1}z^{2q}\}$
$\cup \{y^1y^6$, $y^5y^6$, $y^4y^5$, $z^{1}z^{2q+2}$, $z^{2q+1}z^{2q+2}$, $z^{2q}z^{2q+1}$, $y^{5}z^{2q+1}$,
$y^6z^{2q+2}$, $y^2z^{2q+2}$, $y^1z^{2q+1}$, $y^6z^{2q}$, $y^5z^{2q-1}\})$. To construct $\mathcal{N}_{2,q}$,
consider the following coloring $\gamma^2$ on the edges of $\Gamma^2$: same colors on the old edges as in
$\mathcal{N}_{1,q}$, color $1$ on the edges $y^1y^6,y^4y^5$, $z^{1}z^{2q+2}, z^{2q}z^{2q+1}$ color $2$ on the
edges $y^5y^6$, $z^{2q+1}z^{2q+2}$, color $3$ on the edges $y^2z^{2q+2}$, $y^1z^{2q+1}$, $y^6z^{2q}$,
$y^5z^{2q-1}$ and color $4$ on the edges $y^{5}z^{2q+1}, y^6z^{2q+2}$  (see Fig. \ref{fig:N2q} $(a)$). Let $T$ be
as in the case $k=2$ and $T_2$ be the cell complex represented by the colored graph
$\Gamma^2|_{V(\Gamma^2)\setminus \{x^5, x^4, x^3, z^3\}}$.

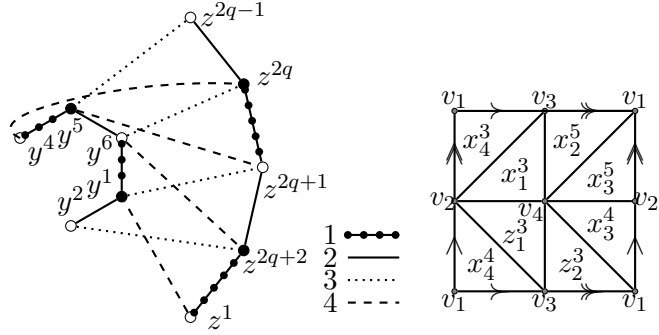
\begin{figure}[ht]
\tikzstyle{vert}=[circle, draw, fill=black!100, inner sep=0pt, minimum width=4pt]
\tikzstyle{vertex}=[circle, draw, fill=black!00, inner sep=0pt, minimum width=4pt]
\tikzstyle{ver}=[]
\tikzstyle{extra}=[circle, draw, fill=black!50, inner sep=0pt, minimum width=2pt]
\tikzstyle{edge} = [draw,thick,-]
\centering
\begin{tikzpicture}[scale=0.3]
\begin{scope}[shift={(1,-5.5)}]
\foreach \x/\y in {16/0,16/4,16/8}{
\node[extra] (x\y) at (\x,\y){};}
\foreach \x/\y in {20/0,20/4,20/8}{
\node[extra] (y\y) at (\x,\y){};}
\foreach \x/\y in {24/0,24/4,24/8}{
\node[extra] (z\y) at (\x,\y){};}
\foreach \x/\y in {x0/y0,x4/y4,x8/z8,y0/z0,y4/z4,y8/z8,x0/x4,x4/x8,y0/y4,y4/y8,z0/z4,z4/z8,x4/y8,y4/z8,y0/x4,z0/y4}{
\path[edge] (\x) -- (\y);}
\node[ver] () at (17.2,1.2){$x_4^4$};
\node[ver] () at (18.7,2.5){$z_1^3$};
\node[ver] () at (21.2,1.2){$z_2^3$};
\node[ver] () at (22.5,3){$x_3^4$};
\node[ver] () at (22.5,5){$x_3^5$};
\node[ver] () at (21,6.7){$x_2^5$};
\node[ver] () at (18.7,5.2){$x_1^3$};
\node[ver] () at (17,6.7){$x_4^3$};
\node[ver] () at (19.4,3.5){$v_4$};
\node[ver] () at (16,-0.5){$v_1$};
\node[ver] () at (20,-0.5){$v_3$};
\node[ver] () at (24,-0.5){$v_1$};
\node[ver] () at (16,8.5){$v_1$};
\node[ver] () at (20,8.5){$v_3$};
\node[ver] () at (24,8.5){$v_1$};
\node[ver] () at (15.5,4){$v_2$};
\node[ver] () at (24.5,4){$v_2$};
\node[ver] () at (18,0){$\succ$};
\node[ver] () at (22,0){$\succ$};
\node[ver] () at (22.2,0){$\succ$};
\node[ver] () at (18,8){$\succ$};
\node[ver] () at (22,8){$\succ$};
\node[ver] () at (22.2,8){$\succ$};
\node[ver] () at (16,2){$\wedge$};
\node[ver] () at (16,6){$\wedge$};
\node[ver] () at (16,6.2){$\wedge$};
\node[ver] () at (24,2){$\wedge$};
\node[ver] () at (24,6){$\wedge$};
\node[ver] () at (24,6.2){$\wedge$};
\end{scope}

\begin{scope}[shift={(16,-2)}, rotate=180]
\foreach \x/\y in {1/4,1/3,1/2,1/1}{
\node[ver] (d\y) at (\x,\y){};}
\foreach \x/\y in {4.5/4,4.5/3,4.5/2,4.5/1}{
\node[ver] (c\y) at (\x,\y){$\y$};}
\foreach \x/\y in {c1/d1,c2/d2}{
\path[edge] (\x) -- (\y);}
\path[edge, dashed] (c4) -- (d4);
\path[edge, dotted] (c3) -- (d3);
\draw [line width=3pt, line cap=round, dash pattern=on 0pt off 2\pgflinewidth]  (c1) -- (d1);
\end{scope}
\begin{scope}[rotate=-90]
\foreach \x/\y in {0/y^{2},120/y^{6},240/y^{4}}{
\node[ver] (\y) at (\x:1.5){$\y$};
    \node[vertex] (\y) at (\x:2.6){};
}
\foreach \x/\y in {60/y^{1},180/y^{5}}{
\node[ver] (\y) at (\x:1.5){$\y$};
    \node[vert] (\y) at (\x:2.6){};
}
\node[ver] (b4) at (180:3){};
\end{scope}
\begin{scope}[]
\foreach \x/\y in {51.43/z^{2q-1},308.57/z^{1},360/z^{2q+1}}{
    \node[vertex] (\y) at (\x:8.5){};
}
\foreach \x/\y in {45/z^{2q-1},355/z^{2q+1}}{
\node[ver] () at (\x:9.6){$~~\y$};
}
\node[ver] () at (315:9.4){$z^1$};
\foreach \x/\y in {334.29/z^{2q+2},25.72/z^{2q}}{
\node[ver] (\y) at (\x:9.6){$~~\y$};
    \node[vert] (\y) at (\x:8.5){};
}
\end{scope}
\foreach \x/\y in
{z^{1}/z^{2q+2},y^{4}/y^{5},y^{6}/y^{5},y^{6}/y^{1},y^{1}/y^{2},z^{2q}/z^{2q-1},z^{2q}/z^{2q+1},
z^{2q+2}/z^{2q+1}}{
    \path[edge] (\x) -- (\y);
}
\foreach \x/\y in {z^{1}/z^{2q+2},y^{4}/y^{5},y^{6}/y^{1},z^{2q}/z^{2q+1}}{
    \draw [line width=3pt, line cap=round, dash pattern=on 0pt off 2\pgflinewidth]  (\x) -- (\y);
}
\foreach \x/\y in {y^{6}/z^{2q+2},z^{1}/y^{1},y^{5}/z^{2q+1}}{
    \path[edge, dashed] (\x) -- (\y);
}
\foreach \x/\y in {y^{2}/z^{2q+2},z^{2q-1}/y^{5},y^{1}/z^{2q+1},y^{6}/z^{2q}}{
    \path[edge, dotted] (\x) -- (\y);
}
\draw[edge, dashed] plot [smooth,tension=1.5] coordinates{(y^{4}) (b4) (z^{2q}) };
\end{tikzpicture}
\caption{(a) Crystallization  $\mathcal{N}_{2,q}$ of $L(2q+1,q)$ \hspace{10mm} (b) $|T|\cap |T_k|$ \hspace{20mm}}
\label{fig:N2q}
\end{figure}

Then, $a^{q+1}b \sim z^{2q}_{24}z^{1}_{14}z^{2q}_{34} = z^{2q}_{24}z^{2q+2}_{14}z^{2q+2}_{34} \sim$
$z^{2q}_{24}z^{2q+2}_{24}= z^{2q}_{24}z^{2q+1}_{24}$. This implies, $a^{2q+1}b^2$ $\sim a^{q+1}ba^{q}b  \sim
z^{2q}_{24}z^{2q+1}_{24}z^{1}_{34}z^{2q}_{34} = z^{2q}_{24}z^{2q+1}_{24}z^{2q+1}_{34}z^{2q}_{34} \sim
z^{2q}_{24}z^{2q+1}_{14}z^{2q}_{34} = z^{2q}_{24}z^{2q}_{14}z^{2q}_{34} = \partial (z^{2q}_{4}) \sim 1$ in
$|T_2|$. Thus, $\pi_1(|T_2|, v_1)=\langle  \alpha, \beta\, | \, \alpha^{2q+1} \beta^2, \alpha \beta \alpha^{-1}
\beta^{-1}\rangle$ and hence $|T |\cup |T_2| =L(2q+1,2)$. Therefore, $\mathcal{N}_{2,q}$ is a crystallization of
$L(2q+1,2) \cong L(2q+1,q)$.

\noindent {\boldmath{$k\geq 3$}} {\bf case:} Let $\Gamma^k=(V(\Gamma^{k-1})\cup\{y^{2k+1}, y^{2k+2}, z^{2q+2k-3},
z^{2q+2k-2}\}$, $E(\Gamma^{k-1})$ $\setminus \{y^{2k}z^{2q}$, $y^{2k-1}z^{2q-1}$, $y^{2k-1}y^4$,
$z^{2q+2k-5}z^{2q}\}\cup \{y^{2k-1}y^{2k+2}$, $y^{2k+1}y^{2k+2}$, $y^{4}y^{2k+1}$, $z^{2q+2k-5}z^{2q+2k-2}$,
$z^{2q+2k-3}z^{2q+2k-2}$, $z^{2q}z^{2q+2k-3}, y^{2k+1}z^{2q+2k-3}, y^{2k+2}z^{2q+2k-2}$, $y^{2k}z^{2q+2k-2}$,
$y^{2k-1}z^{2q+2k-3}$, $y^{2k+2}z^{2q}$, $y^{2k+1}z^{2q-1}\} )$. To construct $\mathcal{N}_{k,q}$, consider the
following coloring $\gamma^k$ on the edges of $\Gamma^k$: same colors on the old edges as in
$\mathcal{N}_{k-1,q}$, color $1$ on the edges $y^{2k-1}y^{2k+2}$, $y^{4}y^{2k+1}$,  $z^{2q+2k-5}z^{2q+2k-2}$,
$z^{2q}z^{2q+2k-3}$, color $2$ on the edges  $y^{2k+1}y^{2k+2}$, $z^{2q+2k-3}z^{2q+2k-2}$, color $3$ on the edges
$y^{2k}z^{2q+2k-2}$, $y^{2k-1}z^{2q+2k-3}$, $y^{2k+2}z^{2q}$, $y^{2k+1}z^{2q-1}$ and color $4$ on the edges
$y^{2k+1}z^{2q+2k-3}$, $y^{2k+2}z^{2q+2k-2}$. Let $T$ be as in the case $k=1$ and $T_k$ be the cell complex
represented by the colored graph $\Gamma^k|_{V(\Gamma^k)\setminus \{ x^5, x^4, x^3, z^3 \}}$.

\smallskip

\noindent {\em Claim.} $a^{kq+1}b^k \sim z^{2q}_{24}z^{2q+2k-3}_{14}z^{2q}_{34}$ in $|T_k|$.

\smallskip

We prove the claim by induction. It is true for $k=2$. Assume that $a^{(k-1)q+1}b^{k-1} \sim z^{2q}_{24}
z^{2q+2(k-1)-3}_{14}z^{2q}_{34}$ in $|T_{k-1}|$. Now $a^{q}b \sim z^{2q+2k-3}_{34}z^{2q}_{34}$ and $a^{q(k-1)+1}
b^{(k-1)} \sim z^{2q}_{24}z^{2q+2k-4}_{24}= z^{2q}_{24}z^{2q+2k-3}_{24}$. So, $a^{qk+1}b^{k} \sim (a^{q(k-1)+1}
b^{(k-1)} (a^{q}b) \sim z^{2q}_{24}z^{2q+2k-3}_{24}z^{2q+2k-3}_{34}z^{2q}_{34} =z^{2q}_{24}z^{2q+2k-3}_{14}$ in
$|T_{k}|$. The claim now follows by induction.

Since $z^{2q}_{14} = z^{2q+2k-3}_{14}$ in $T_k$, by the claim we get $a^{kq+1}b^{k} \sim 1$ in $|T_k|$. Thus,
$\pi_1(|T_k|, v_1)=\langle  \alpha, \beta\, | \, \alpha^{kq+1} \beta^k, \alpha \beta \alpha^{-1}
\beta^{-1}\rangle$ and hence $|T |\cup |T_k| =L(kq+1,k) \cong L(kq+1,q)$. Therefore, $\mathcal{N}_{k,q}$ is a
crystallization of $L(kq+1,q)$.}
\end{eg}

A few days after we posted the first version of this article in the arXiv (arXiv:1308.6137), Casali and
Cristofori posted an article on complexity of lens spaces \cite{cc13} in the arXiv \linebreak (arXiv:1309.5728).
In that paper, the authors constructed crystallizations of $L(p,q)$ with $4S(p,q)$ vertices, where $S(p,q)$
denotes the sum of all partial quotients in the expansion of $q/p$ as a regular continued fraction. In
particular, they have constructed $L(kq-1, q)$ with $4(k+q-1)$ vertices for $k, q\geq 2$ and $L(kq+1, q)$ with
$4(k+q)$ vertices for $k, q\geq 1$. Their constructions are different from ours.

\section{Proofs of Theorems \ref{theorem:T2}, \ref{theorem:T4} and
Corollary \ref{cor:h2>6m} }

\noindent {\em Proof of Theorem} \ref{theorem:T2}. From Example \ref{ex:Mkq}, we know that $|\mathcal{M}_{2,3}| =
L(5,3)= L(5,2)$. Part (i) now follows from Lemmas \ref{lemma:psi(G)}, \ref{unique:J12}, \dots, \ref{unique:J4}.

If $f_3(X) <8$ then, by Theorem \ref{theorem:T1}, $\psi(M) <8$ and hence $\psi(M) =2$. Then $\pi(M, \ast) =
\{0\}$  and hence, by Poincar\'{e} conjecture, $M = S^3$. Part (ii) now follows from Lemma \ref{unique:J12}.
\hfill $\Box$

\bigskip

\noindent {\em Proof of Corollary} \ref{cor:h2>6m}. From the proof of Lemma \ref{unique:J3}, $m(Q_8) =2$.
Therefore, by Corollary \ref{cor:h2>=6m} and Lemma \ref{lemma:psi(M)}, $h_2(S^3/Q_8) \geq \psi(S^3/Q_8) -2 =18-2
> 12 = 6m(S^3/Q_8)$.

Again, from Corollary \ref{cor:h2>=6m} and Lemma \ref{lemma:psi(M)}, $h_2(S^1\times S^1\times S^1) \geq \psi(S^1
\times S^1\times S^1) -2 =24-2 > 6\times 3 = 6m(S^1\times S^1\times S^1)$.

Finally, by Theorem \ref{theorem:T2} (ii) and Corollary \ref{cor:h2>=6m}, $h_2(L(p,q)) \geq \psi(L(p,q)) -2 > 8 -
2 = 6\times 1 = 6m(L(p,q))$ for $p\geq 3$. This completes the proof. \hfill $\Box$

\bigskip

\noindent {\em Proof of Theorem} \ref{theorem:T4}.
The result follows from Examples \ref{ex:Mkq} and \ref{ex:Nkq}. \hfill $\Box$

\bigskip

\noindent {\bf Acknowledgement:} This work is supported in part by UGC Centre for Advanced Studies. The first
author thanks CSIR, India for SPM Fellowship. The authors thank M. R. Casali and P. Cristofori for pointing out
an error in an earlier version of this paper.


{\footnotesize

}

\end{document}